\def\A{{\bf A}}
\def\C{{\bf C}}
\def\I{{\bf I}}
\def\K{{\bf K}}
\def\M{{\bf M}}
\def\PP{{\bf P}}
\def\Q{{\bf Q}}
\def\R{{\bf R}}
\def\T{{\bf T}}
\def\U{{\bf U}}
\def\u{{\bf u}}
\def\V{{\bf V}}
\def\v{{\bf v}}
\def\W{{\bf W}}
\def\w{{\bf w}}
\def\X{{\bf X}}
\def\x{{\bf x}}
\def\Y{{\bf Y}}
\def\Z{{\bf Z}}
\def\0{{\bf 0}}
\def\1{{\bf 1}}
\def\KM{{\mathcal K}}
\def\OM{{\mathcal O}}
\def\PM{{\mathcal P}}
\def\SM{{\mathcal S}}
\def\RB{{\mathbb R}}
\def\Si{\mbox{\boldmath$\Sigma$\unboldmath}}
\def\Lam{\mbox{\boldmath$\Lambda$\unboldmath}}
\def\Pii{\mbox{\boldmath$\Pi$\unboldmath}}
\def\argmin{\mathop{\rm argmin}}
\def\range{\mathrm{range}}
\def\nul{\mathrm{null}}
\def\nullity{\mathrm{nullity}}
\def\rk{\mathrm{rank}}
\def\etal{{\em et al.\/}\,}
\title{Improved Analyses of the Randomized Power Method and Block Lanczos Method}
\author{authors}
\author{Shusen Wang\thanks{College of Computer Science \& Technology, Zhejiang University, Hangzhou 310027, China.
(\email{wss@zju.edu.cn}).}
\and
Zhihua Zhang\thanks{Department of Computer Science and Engineering, Shanghai Jiao Tong University, Shanghai 200240, China.
(\email{zhihua@sjtu.edu.cn}).}
\and
Tong Zhang\thanks{Department of Statistics, Rutgers University, Piscataway, New Jersey 08854, USA.
(\email{tzhang@stat.rutgers.edu}).}
}
\begin{document}
\maketitle
\slugger{mms}{xxxx}{xx}{x}{x--x}%slugger should be set to mms, siap, sicomp, sicon, sidma, sima, simax, sinum, siopt, sisc, or sirev

\begin{abstract}
The power method and block Lanczos method are popular numerical algorithms for computing the truncated singular value decomposition (SVD) and eigenvalue decomposition problems. Especially in the literature of randomized numerical linear algebra, the power method is widely applied to improve the quality of randomized sketching, and relative-error bounds have been well established. Recently, Musco \& Musco (2015) proposed a block Krylov subspace method that fully exploits the intermediate results of the power iteration to accelerate convergence. They showed spectral gap-independent bounds which are stronger than the power method by order-of-magnitude. This paper offers novel  error analysis techniques and significantly improves the bounds of both the randomized power method and the block Lanczos method. This paper also establishes the first gap-independent bound for the warm-start block Lanczos method.
\end{abstract}

\begin{keywords}
random projection, block Lanczos, power method, Chebyshev polynomials, Krylov subspace
\end{keywords}

\begin{AMS}15A18, 65F30, 68W20\end{AMS}

\pagestyle{myheadings}
\thispagestyle{plain}
\markboth{Randomized Power Method and Block Lanczos Method}{Randomized Power Method and Block Lanczos Method}

\section{Introduction} \label{sec:introduction}

Efficiently computing the truncated eigenvalue decomposition and singular value decomposition (SVD)
is one of the most important topics in numerical linear algebra.
The power  and  block Lanczos methods are the most popular approaches to such tasks,
and spectral gap-dependent bounds \cite{arbenz2012lecture,saad2011numerical,li2013convergence,yang1999theoretical}
or gap-independent bounds \cite{halko2011finding,woodruff2014sketching,gu2015subspace,musco15stronger} have been established.

This work is closely related to the recent advancements in the randomized numerical linear
algebra (NLA) society \cite{boutsidis2011near,halko2011finding,mahoney2011ramdomized,woodruff2014sketching}.
The seminal work \cite{halko2011finding} proposed to efficiently and approximately compute the truncated SVD by random projection.
Specifically, let $\M\in \RB^{m\times n}$ be any matrix, $k$ ($\ll m,n$) be the target rank,
$\X\in \RB^{n\times p}$ ($p\geq k$) be the standard Gaussian matrix, and $\C = \M \X \in \RB^{m\times p}$ be a sketch of $\M$.
Let $\PM_{\C,k}^\xi (\M)\in \RB^{m\times n}$ be the rank $k$ projection of $\M$ on the column space of $\C$ (defined in Section~\ref{sec:notation}).
When $p={k}/{\epsilon}$, the matrix $\PM_{\C,k}^F (\M)$ approximates $\M$ with $(1+\epsilon)$ Frobenius norm relative-error bound.
Unfortunately, the ($1+\epsilon$) relative-error spectral norm bound, which is more useful than the Frobenius norm bound,
cannot hold due to the $\Omega (\frac{n}{p})$ lower bound \cite{witten2013randomized}.

To improve the approximation quality,
Halko \etal \cite{halko2011finding} proposed to run the power method (specifically, the simultaneous subspace iteration) multiple times.
Specifically, after obtaining the sketch $\C_{[0]} = \M \X\in \RB^{m\times p}$,
one can run the power iteration $t$ times to obtain
\[
\C_{[t]} = (\M \M^T)^t \C_{[0]} = (\M \M^T)^t \M \X\in \RB^{m\times p}.
\]
The sketch matrix $\C_{[t]}$ is obviously better than $\C_{[0]}$:
when $t = \OM( \frac{\log n}{\epsilon})$,
the sketch matrix $\C_{[t]}$ admits $(1+\epsilon)$ spectral norm relative-error bound \cite{boutsidis2011near,gu2015subspace}.
In this paper, by different proof techniques, we improves the
$(1+\epsilon) \sigma_{k+1}^2$ bound to $\sigma_{k+1}^2+ \epsilon \sigma_{p+1}^2$.

To accelerate convergence, one can keep the intermediate results---the small-scale sketches
$\C_{[0]} , \cdots , \C_{[d]}$---to form the Krylov matrix
\[
\K \; = \;
\big[ \C_{[0]} , \cdots , \C_{[d]} \big]
\; = \;
\big[\M \X ,\, (\M \M^T) \M \X, \, \cdots ,\, (\M \M^T)^d \M \X \big] \in \RB^{m\times (d+1)p}.
\]
The approximation theory of Chebyshev polynomials \cite{sachdeva2013faster}
tells us that the Krylov subspace $\KM = \range (\K)$ almost contains the top $k$ principal components of $\M$.
Musco \& Musco \cite{musco15stronger} recently showed that
when $d = \OM (\frac{\log n}{\sqrt{\epsilon}})$, using $\K$ as a sketch of $\M$,
the truncated SVD of $\M$ can be solved with $(1+\epsilon)$ spectral norm relative-error bound.
The fact that $d$ is much smaller than $t$ indicates much fewer power iterations.

%
%
%Suppose we are given a symmetric positive semidefinite (SPSD) matrix $\A \in \RB^{n\times n}$, a small integer $k \ll n$,
%and an initial guess $\X \in \RB^{n\times p}$ ($k \leq p \ll n$).
%The simultaneous power iteration  \cite{golub2012matrix}
%computes $\A^t \X \in \RB^{n\times p}$ by performing matrix multiplications for $t$ times.
%Under mild assumptions, $\range(\A^t \X)$ almost contains the top $k$ principal components of $\A$.
%
%
%
%The power iteration forms the matrix $\K = [\X, \A \X, \A^2 \X, \cdots, \A^d \X] \in \RB^{n\times (d+1)p}$,
%and $\KM = \range (\K)$ is well known as the Krylov subspace \cite{golub2012matrix}.
%The fact that the Krylov subspace almost contains the  top $k$ principal components of $\A$
%leads to popular algorithms such as  block Lanczos methods.
%Using the approximation theory of Chebyshev polynomials \cite{sachdeva2013faster},
%\cite{saad1980rates,musco15stronger} showed that the block Lanczos method attains the
%same accuracy as the power method with $d$ substantially smaller than $t$,
%which indicates the block Lanczos method needs fewer power iterations and less computation.

Notice that as $d$ grows large, the blocks in $\K$ are getting increasingly linearly dependent,
which leads to numerical instability.
Therefore, the classical work in NLA has many treatments to prevent this from happening,
e.g.\ reorthogonalization or partial reorthogonalization.
This is beyond the scope of this paper;
the readers can refer to \cite[Chapter 4]{cullum2002lanczos} to see the discussions of Lanczos procedures without reorthogonalization.
In this paper we consider only the convergence issues without going to the details of numerically stable implementations.
In addition, in finding low-precision approximations, the term $d$ is small and the columns of $\K$ are unlikely nearly dependent,
and thus the numerical stability is not a big issue.
In finding low-precision solutions, Musco \& Musco \cite{musco15stronger} naively used $\K$ as the sketch without
performing additional numerical treatments to each block,
yet their experiments still demonstrates good performance.

It is worth mentioning that there is subtle difference between the work in the randomized NLA society and
that in the traditional NLA society.
As for the classical Krylov methods \cite{saad2011numerical,li2013convergence},
the block size $p$ is usually a small integer or even one.
Differently, in this paper, as well as in \cite{musco15stronger},
we make $p$ greater than or equal to the target rank $k$.
This difference is due to different motivations.
The classical block Lanczos methods set $p$ small so as to find a few eigen-pairs (or singular value triples) at a time.
In this work and \cite{musco15stronger},
the block size $p$ is in accordance with the scale of the simultaneous subspace iteration,
and the Krylov matrix $\K$ serves as a sketch of the data matrix---it
is indeed the record of the output of every simultaneous subspace iteration.

We will discuss the effects of block size in Section~\ref{sec:comparison_dependent}.
We find that a big block size leads to large per-iteration cost but small number of iterations.
In fact, a big block size does not necessarily leads to large total time or memory cost.
More importantly, in big data application where data does not fit in memory,
a pass through the data is expensive due to the I/O costs of memory-disk swaps.
In such applications, our setting of big block size has advantage over the classical ones
for its pass-efficiency.
However, in computing high precision SVD, the results in this paper and \cite{musco15stronger} may not be useful
due to high computational costs and the numerical stability issues.

%
%Musco \& Musco recently established stronger error bounds for a block Lanczos method.
%Specifically, they showed that for any matrix $\M \in \RB^{m\times n}$, the $(1+\epsilon)$ relative-error matrix norm bound holds
%with high probability when
%(1) the initial guess $\X \in \RB^{n\times p}$ is a standard Gaussian random matrix,
%and (2) $d = \OM \big( \frac{\log (n/\epsilon)}{\sqrt{(\sigma_{k} / \sigma_{p+1}) - 1}} \big)$ (gap-dependent)
%or $d = \OM (\frac{\log n}{\sqrt\epsilon})$ (gap-independent).
%The proof of Musco \& Musco \cite{musco15stronger} follows Gu \cite{gu2015subspace}
%except for using the matrix Chebyshev polynomial in place of the matrix monomial.

This paper offers several novel and stronger results for simultaneous subspace iteration and the block Krylov subspace method:
\begin{itemize}
\item
    For the block Lanczos method,
    we establish gap-independent,\footnote{Gap-independent means that the convergence rate does not depend on the spectral gaps.}
    matrix norm bounds,
    which are worse than the best rank $k$ approximation by $\epsilon \sigma_{p+1}^2$.
    Notice that in \cite{musco15stronger} the error term is  $\epsilon \sigma_{k+1}^2$,
    which is weaker than our result.
\item
    For the block Lanczos method,
    the convergence analysis of \cite{musco15stronger} unnecessarily makes $d$ depend on $\log n$.
    We eliminate the $\log n$ dependence to improve the convergence analysis.
    In this way, we obtain the first gap-independent bound for the warm-start block Lanczos method.
\item
    We offers novel gap-independent, matrix norm bound for the power method, which is stronger than the existing results
    \cite{boutsidis2011near,gu2015subspace,halko2011finding,woodruff2014sketching}.
\end{itemize}
As a minor contribution, we establish gap-dependent bounds with random initialization analysis.
Though the power method and the Lanczos method are usually initialized by Gaussian matrices,
to the best of our knowledge, similar initialization analysis is absent in the literature.\footnote{Our
random initialization analysis resembles \cite{halko2011finding,hardt13noisy}, but our result is stronger and more useful.}

The remainder of this paper is organized as follows.
Section~\ref{sec:notation} defines the notation and introduces preliminaries of matrix algebra.
Section~\ref{sec:main} presents the main theorems---the improved gap-independent bounds of the power method and the block Lanczos method.
Sections~\ref{sec:initialization} and \ref{sec:convergence_independent} prove the main theorems:
Section~\ref{sec:initialization} analyzes the error incurred by random initialization,
and Section~\ref{sec:convergence_independent} establishes gap-independent convergence bounds.
Section~\ref{sec:dependent} establishes the gap-dependent bounds and discusses the effect of block size.
Additional lemmas and proofs are deferred to the appendix.

\section{Preliminaries} \label{sec:notation}

In this section we define the notation and introduce
the preliminaries in matrix algebra, polynomial functions, and matrix functions.

\subsection{Elementary Matrix Algebra}

Let $\M $ be any $m\times n$ matrix and
\begin{equation} \label{eq:def_svd_M}
\M
\; = \; \U \Si \V^T
\; = \; \underbrace{\U_k \Si_k \V_k^T}_{=\M_k} + \underbrace{\U_{-k} \Si_{-k} \V_{-k}^T }_{=\M_{-k}}
\; = \; \sum_{i=1}^n \sigma_i \u_i \v_i^T
\end{equation}
be the full singular value decomposition (SVD),
where the diagonal entries of $\Si$ are in the descending order,
and $\U_{k}$ ($m\times k$), $\Si_{k}$ ($k\times k$), $\V_{k}$ ($n\times k$) be the top $k$ principal components.

The principal angle between subspaces is defined in the the following.
Let $\X \in \RB^{n\times p}$ ($p\geq k$) have full column rank,
$\U_k \in \RB^{n\times k}$ have orthonormal columns,
$\U_{-k} \in \RB^{n\times (n-k)}$ be the orthogonal complement of $\U_k$,
and $\PM_k$ be the collection of rank $k$ orthogonal projectors.
Following \cite{hardt13noisy}, we define the principal angle between $\range (\U_k)$ and $\range (\X)$ by
\begin{align*}
\tan \theta_k \big( \U_k , \, \X \big)
\; = \; \min_{\PP \in \PM_k} \max_{\substack{\|\w\|_2=1 \\ \PP \w = \w}}
    \frac{\| \U_{-k}^T \X \w\|_2}{\|\U_k^T \X \w\|_2} .
%\; = \; \min_{\PP \in \PM_k} \tan \theta \big( \U_k ,  \, \X \PP \big) .
\end{align*}
Obviously, $\range (\U_k) \subset \range (\X)$ if $\tan \theta_k ( \U_k , \, \X ) = 0$.
When $\rk (\X) = k$, we use $\theta$ to replace $\theta_k$.

We define the projection operations in the following.
Let $\C \in \RB^{m\times c}$ have rank $p$.
The matrix $\C \C^\dag$ is an orthogonal projector,
and $\C \C^\dag \M$ projects $\M$ on $\range (\C)$.
We let $\PM_p$ be the collection of rank $p$ orthogonal projectors (with proper size).
The matrix $\I_m - \C \C^\dag$ is a rank $m-p$ orthogonal projector.
We define the projection operations by
\[
\PM_{\C,k}^\xi (\M)
\; = \; \C \, \cdot \, \argmin_{\rk(\Y) \leq k} \: \big\| \M - \C \Y \big\|_\xi^2
\]
for $k\leq p$ and $\xi = 2$ or $F$.
Notice that $\PM_{\C,k}^2 (\cdot )$ and $\PM_{\C,k}^F (\cdot )$ are different in general,
and they are not orthogonal projectors unless $k = \rk (\C)$.
The following inequality holds due to the optimality of the orthogonal projector $\C \C^\dag$ \cite{gu2015subspace}:
\begin{equation} \label{eq:optimal_ccdag}
\big\| \M - \C \C^\dag \M \big\|_\xi^2
\; \leq \; \big\| \M - \PM_{\C,k}^\xi (\M) \big\|_\xi^2.
\end{equation}
Let $\Q_\C \in \RB^{m\times p}$ be any orthonormal bases of $\C$. It was shown in \cite{woodruff2014sketching} that
\begin{equation} \label{eq:projection_Fnorm}
\PM_{\C,k}^F (\M) \; = \; \Q_\C (\Q_\C^T \M)_k .
\end{equation}
However, unless $k = \rk (\C)$, closed form expression of the spectral norm case $\PM_{\C,k}^2(\M)$ is yet unknown.

%The Chebyshev polynomial and its variants are powerful tools for approximating monomials and polynomials \cite{sachdeva2013faster}.
%Let $\pi (x)$ be a $d$ degree polynomial formed by taking a certain weighted average of
%the Chebyshev polynomials from degree $0$ to degree $d$.
%When $d = \tilde\OM (\sqrt{t})$, $\pi (x)$ can approximate the monomial $x^t$ with high precision for all $x\in [-1,1]$.
%This enables us to approximate $\A^t \X$ ($\A$ is SPSD) by performing the power iteration only $d = \tilde\OM (\sqrt{t})$ times,
%which is the key idea of the Lanczos methods.
%We let $\K = [\X , \A \X, \A^2 \X, \cdots, \A^d \X]$.
%The subspace $\KM = \range (\K)$ is well known as the Krylov subspace,
%and it is easy to see that $\range (\pi (\A) \X) \subset \KM$ for any degree $d$ polynomial $\pi (\cdot)$.
%
%

\subsection{Chebyshev Polynomial and Matrix Functions} \label{sec:chebyshev}

Functions of SPSD matrices are defined according to the eigenvalue decomposition.
Let $f(\cdot)$ be any function, $\A \in \RB^{n\times n}$ be any SPSD matrix, and
$\A = \U \Lam \U^T = \sum_{i=1}^n  \lambda_i \u_i \u_i^T$ be the eigenvalue decomposition.
Then
\[
f (\A) \; = \;
\U f (\Lam) \U^T
\; = \;
\sum_{i=1}^n  f(\lambda_i) \u_i \u_i^T.
\]

To analyze the Krylov subspace methods, we consider a function $\phi (x)$ defined on $\RB_+$.
Let $T_d (x)$ be the $d$ degree Chebyshev polynomial \cite{sachdeva2013faster}.
Musco \& Musco \cite{musco15stronger} defined the $d$ degree polynomial
\[
p (x)
\; = \; (1+\gamma) \alpha \frac{ T_d (x/\alpha)  }{ T_d (1+\gamma) }
\]
for any parameters $\alpha > 0$ and $\gamma  \in (0, 1]$.
Accordingly, we define the function $\phi (\cdot)$:
\[
\phi (x)
\; = \; \left\{
          \begin{array}{c c}
            p (x)  \;  & \textrm{if } x > 0 ; \\
            0  \; & \textrm{if }  x \leq 0 . \\
          \end{array}
        \right.
\]
%Notice that $\phi (x)$ is not necessarily a polynomial function.
The properties of function $\phi (\cdot)$ are listed in the following lemma.

\begin{lemma} \label{lem:musco15}
Suppose we  specify a value $\alpha > 0$, a gap $\gamma \in (0, 1]$, and a degree $d > 1$.
The function $\phi (\cdot )$  satisfies that
\begin{enumerate}
\item[\emph{(i)}]
    $\phi (0) = 0$;
\item[\emph{(ii)}]
    $\phi (x) > 0$ when $ x \geq \alpha$;
\item[\emph{(iii)}]
    $\phi (x) > x$ for all $x \geq (1+\gamma)\alpha $;
\item[\emph{(iv)}]
    $\big| \phi (x) \big| \leq \frac{\alpha}{ 2^{d \sqrt{\gamma} - 1} }$ for all $x\in [0,\alpha]$;
\item[\emph{(v)}]
    For any SPSD matrix $\A \in \RB^{n\times n}$,  let $\K = [\X, \A \X, \A^2\X, \cdots, \A^d \X]$.
    Then $\range \big( \phi (\A) \X  \big) \subset \range (\K)$.
\end{enumerate}
\end{lemma}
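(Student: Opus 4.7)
The lemma collects five assertions about $\phi$. Parts (i) and (ii) are essentially immediate: (i) holds by the piecewise definition of $\phi$ at $0$, while (ii) follows because $T_d(y) \geq 1 > 0$ for $y \geq 1$, which combined with $T_d(1+\gamma), (1+\gamma)\alpha > 0$ forces $p(x) > 0$ whenever $x \geq \alpha$. For the substantive parts (iii)--(v), my plan is to lean on two standard tools: the hyperbolic representation $T_d(y) = \cosh\!\bigl(d\,\mathrm{arccosh}(y)\bigr)$ valid on $y \geq 1$, and $|T_d(y)| \leq 1$ on $[-1,1]$.

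For (iii), I would reduce the inequality $\phi(x) \geq x$ to the monotonicity of $g(y) := T_d(y)/y$ on $[1, \infty)$ when $d \geq 2$; evaluating $g$ at $y = x/\alpha$ against $y = 1+\gamma$ then delivers the claim. To prove monotonicity, substitute $y = \cosh\theta$ with $\theta \geq 0$; the sign of $y^2 g'(y)$ becomes that of $h(\theta) := d\cosh\theta \sinh(d\theta) - \sinh\theta\cosh(d\theta)$, for which $h(0) = 0$ and $h'(\theta) = (d^2-1)\cosh\theta\cosh(d\theta) > 0$ close the argument. For (iv), I would first bound $|p(x)| \leq (1+\gamma)\alpha/T_d(1+\gamma)$ on $[0, \alpha]$ via $|T_d(x/\alpha)| \leq 1$, and then lower-bound $T_d(1+\gamma)$ by establishing $\mathrm{arccosh}(1+\gamma) \geq \sqrt{\gamma}$ on $(0, 1]$ (check that $f(\gamma) := \mathrm{arccosh}(1+\gamma) - \sqrt{\gamma}$ satisfies $f(0) = 0$ and $f'(\gamma) \geq 0$), then apply $\cosh(y) \geq e^y/2 \geq 2^{y-1}$ (using $e > 2$) to obtain $T_d(1+\gamma) \geq 2^{d\sqrt{\gamma}-1}$; combining these yields the stated bound.

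Part (v) is the step I expect to require the most care, because $\phi$ is not globally a polynomial: it agrees with $p$ on the positive spectrum but is declared to vanish at $0$, producing a discrepancy whenever $\A$ has a nontrivial kernel and $p(0) \neq 0$. My plan is to write $\phi(\A) = p(\A)\,\Pi$ where $\Pi := \A\A^{\dagger}$ is the orthogonal projector onto $\range(\A)$; this identity holds because both sides act identically on the positive eigenspaces and both annihilate $\nul(\A)$. Using $\A^k \Pi = \A^k$ for $k \geq 1$ together with the polynomial expansion $p(\A) = \sum_{k=0}^d c_k \A^k$, one obtains $\phi(\A)\X = c_0\,\Pi\X + \sum_{k=1}^{d} c_k\,\A^k\X$; the sum over $k \geq 1$ is a linear combination of column blocks of $\K$ and thus trivially in $\range(\K)$. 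The principal obstacle is absorbing $c_0\,\Pi\X$ into $\range(\K)$: writing $\Pi\X = \X - (\I - \Pi)\X$ and noting that $\X$ is the first column block of $\K$, the task reduces to placing the null-space component $(\I-\Pi)\X$ in $\range(\K)$, which I would handle by using that the higher-order iterates $\A^k\X$ ($k \geq 1$) annihilate $\nul(\A)$ while the first block $\X$ alone carries the null-space content of the family.
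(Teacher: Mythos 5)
Your parts (i)--(iv) are essentially sound and run parallel to the paper, which simply cites Lemma 5 of Musco \& Musco where you reprove things: your monotonicity argument for $T_d(y)/y$ via $y=\cosh\theta$ is correct and is a nice self-contained substitute for that citation (note only that at $x=(1+\gamma)\alpha$ you get equality, so the conclusion is $\phi(x)\ge x$; the strict ``$>$'' in the statement fails at the endpoint, but that is the statement's sloppiness and never matters downstream). In (iv) your chain is valid but does not land on the stated constant: $T_d(1+\gamma)\ge\cosh(d\sqrt{\gamma})\ge 2^{d\sqrt{\gamma}-1}$ together with $1+\gamma\le 2$ gives $|\phi(x)|\le\alpha/2^{d\sqrt{\gamma}-2}$, a factor $2$ weaker than claimed, so ``combining these yields the stated bound'' is not literally true without a sharper lower bound on $T_d(1+\gamma)$ (the paper instead quotes $T_d(1+\gamma)\ge 2^{d\sqrt{\gamma}}$). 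This is a harmless constant-level slip, not the real problem.

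The genuine gap is in (v), exactly at the step you yourself flag as the obstacle. Your reduction is right: $\phi(\A)=p(\A)\,\A\A^\dag$, hence $\phi(\A)\X=c_0\,\A\A^\dag\X+\sum_{j\ge 1}c_j\A^j\X$, and everything hinges on whether $c_0(\I_n-\A\A^\dag)\X$ lies in $\range(\K)$. But the handle you propose --- that ``the first block $\X$ alone carries the null-space content'' --- is not an argument, and none exists: you can only take combinations of whole columns of $\K$, and the combinations that reproduce the null-space part of $\X$ drag range-space parts along with them. Concretely, take $d=2$ (so $T_2(y)=2y^2-1$ and $c_0\neq 0$), $\A=\diag(1,2,3,0)$, $\X=(1,1,1,1)^T$. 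Then $\range(\K)=\spann\{(1,1,1,1)^T,(1,2,3,0)^T,(1,4,9,0)^T\}$, while $\phi(\A)\X=c_0(1,1,1,0)^T+c_2(1,4,9,0)^T$; any element of $\range(\K)$ with vanishing last coordinate must exclude the first column, and $(1,1,1)^T\notin\spann\{(1,2,3)^T,(1,4,9)^T\}$, so $\phi(\A)\X\notin\range(\K)$. Thus (v) as stated cannot be proved for even $d$ and singular $\A$; what your decomposition does prove is the inclusion whenever $\A$ is nonsingular or $d$ is odd (then $c_0=0$ and $\phi(\A)=p(\A)$). For what it is worth, the paper's own proof glosses over the identical point: in its chain of equalities it replaces $\sum_{i\le\rho}\lambda_i^0\u_i\u_i^T$ by $\sum_{i\le n}\lambda_i^0\u_i\u_i^T$, i.e.\ asserts $\phi(\A)=\sum_j c_j\A^j=p(\A)$, which fails in the same example. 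So your instinct about where the difficulty sits was correct, but the repair must come from restricting or reformulating the claim (e.g.\ proving the trivial inclusion $\range(p(\A)\X)\subset\range(\K)$ and checking that $p$ suffices in the downstream arguments), not from absorbing $(\I_n-\A\A^\dag)\X$ into $\range(\K)$, which is impossible in general.
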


\section{Main Results} \label{sec:main}

This section presents the main results.
Section~\ref{sec:main:independent} establishes the gap-independent, matrix norm bounds of
the block Lanczos and the power method.
Section~\ref{sec:comparison_independent} compares with the previous work.

\subsection{Gap Independent Bound} \label{sec:main:independent}

Theorem~\ref{thm:independent} establishes gap-independent bounds of the block Lanczos method.
The random-start bound is stronger than \cite{musco15stronger} because $\sigma_{p+1} \leq \sigma_{k+1}$ for $p \geq k$.
To the best of our knowledge, gap-independent bound of warm-start block Lanczos is unknown before this paper.

\begin{theorem}[Block Lanczos] \label{thm:independent}
Let $\M \in \RB^{m\times n}$ be any matrix, $\X \in \RB^{m\times p}$ have full column rank,
$k$ ($ < p$) be the target rank,
and $\K = [\X, (\M \M^T) \X, \cdots, (\M \M^T)^d \X]$.
\begin{itemize}
\item
    {\bf (Random Start)}
    If $\X$ is a standard Gaussian matrix and
    \[
    d \; = \; \sqrt{\frac{2}{\epsilon}} \: \bigg( \frac{5}{2} + \log_2 \frac{  \sqrt{n-p} + \sqrt{p} + \alpha }{ \epsilon (\sqrt{p} -  \sqrt{k} - \alpha) } \bigg)
    \; = \; \OM \Big(\frac{ \log n }{ \sqrt{\epsilon} }\Big),
    \]
    then there exists an $m\times k$ matrix $\C$ with $\range (\C) \subset \range (\K )$ such that
    \begin{align*}
    \big\| \M - \C \C^\dag \M \big\|_2^2 \; \leq \;  \| \M - \M_k\|_2^2 + \epsilon \|\M- \M_p \|_2^2
    \end{align*}
    holds with probability at least $1 - 2 \exp (- \alpha^2/2)  $.
\item
    {\bf (Warm Start)}
    If $\X$ satisfies $\tan \theta_k ( \U_{k} , \X ) \leq \beta $ and
    \[
    d \; = \; \sqrt{\frac{2}{\epsilon}} \bigg( 2 + \log_2 \frac{  \beta  }{ \epsilon } \bigg),
    \]
    then there exists an $m\times k$ matrix $\C$ with $\range (\C) \subset \range (\K )$ such that
    \begin{align*}
    \big\| \M - \C \C^\dag \M \big\|_2^2 \; \leq \; (1+\epsilon) \| \M - \M_k\|_2^2 .
    \end{align*}
\end{itemize}
\end{theorem}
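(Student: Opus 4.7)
My plan is to exhibit an explicit $m\times k$ matrix $\C$ with $\range(\C)\subset\range(\K)$ and then bound $\|\M-\C\C^\dag\M\|_2^2$ by a two-sided analysis. Let $\A=\M\M^T$ with eigendecomposition $\A=\U\Si^2\U^T$, so the eigenvalues of $\A$ are $\sigma_1^2,\ldots,\sigma_n^2$. Applying the Chebyshev-polynomial function $\phi$ of degree $d$ from Section~\ref{sec:chebyshev}, Lemma~\ref{lem:musco15}(v) gives $\range(\phi(\A)\X)\subset\range(\K)$. I will take $\C=\phi(\A)\X\,\W_1^\dag$ where $\W_1:=\U_k^T\X\in\RB^{k\times p}$; this is $m\times k$, and $\W_1$ has rank $k$ almost surely in the random-start case and whenever $\tan\theta_k(\U_k,\X)<\infty$ in the warm-start case.

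\textbf{Reduction to a perturbation of $\U_k$.} Writing $\W=\U^T\X$ with $\W_1$ and $\W_2:=\U_{-k}^T\X$ as its top and bottom blocks, a direct calculation yields $\C=\U_k\,\phi(\Si_k^2)+\U_{-k}\,\phi(\Si_{-k}^2)\W_2\W_1^\dag$. Right-multiplying by $\phi(\Si_k^2)^{-1}$, which is well defined whenever $\sigma_k^2>\alpha$ by Lemma~\ref{lem:musco15}(ii) and does not alter the column space, yields $\ti\C=\U_k+\U_{-k}\F$ with $\F=\phi(\Si_{-k}^2)\W_2\W_1^\dag\,\phi(\Si_k^2)^{-1}$. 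A standard block manipulation then bounds $\|\M-\C\C^\dag\M\|_2^2$ by $\sigma_{k+1}^2$ plus the top singular value of $\Si_{-k}\F$, reducing the entire theorem to bounding $\|\Si_{-k}\F\|_2$.

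\textbf{Chebyshev bounds and initialization.} By Lemma~\ref{lem:musco15}(iii)--(iv), $\phi(x)>x$ on $[(1+\gamma)\alpha,\infty)$ and $|\phi(x)|\leq\alpha/2^{d\sqrt\gamma-1}$ on $[0,\alpha]$. In the warm-start case I set $\alpha$ just below $\sigma_{k+1}^2$ and $\gamma=1$, so property~(iv) applies uniformly to all of $\Si_{-k}$ and gives $\|\F\|_2\leq\beta\cdot 2^{-(d\sqrt\gamma-1)}$ via the hypothesis $\tan\theta_k(\U_k,\X)\leq\beta$, which directly controls $\|\W_2\W_1^\dag\|_2$. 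Requiring this to be $\leq\epsilon$ yields the stated $d$ and the $(1+\epsilon)\sigma_{k+1}^2$ bound. In the random-start case I further split $\U_{-k}$ into a middle block (indices $k+1,\ldots,p$) and a tail (indices $p+1,\ldots,n$), take $\alpha$ of order $\sigma_{p+1}^2$, and apply~(iv) only to the tail; the middle block is folded into the intrinsic rank-$k$ truncation error $\sigma_{k+1}^2$. The scalar $\|\W_2\W_1^\dag\|_2$ is then controlled by the Gaussian-initialization estimates of Section~\ref{sec:initialization}, which give a bound of order $(\sqrt{n-p}+\sqrt{p}+\alpha)/(\sqrt{p}-\sqrt{k}-\alpha)$ with probability $1-2\exp(-\alpha^2/2)$, and plugging this in matches the $\log_2$ factor appearing in the stated value of $d$.

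\textbf{Main obstacle.} The delicate point is the random-start accounting that produces the additive error $\sigma_{k+1}^2+\epsilon\sigma_{p+1}^2$ rather than the weaker $(1+\epsilon)\sigma_{k+1}^2$ of \cite{musco15stronger}. The key is to exploit the $p-k$ dimensions of slack in $\range(\phi(\A)\X)$: since this subspace already contains an almost-exact top-$p$ subspace of $\A$, restricting to a rank-$k$ piece inside it incurs only the intrinsic spectral gap $\sigma_{k+1}^2$ rather than an $\epsilon$-amplified multiple of it. Getting this accounting right, via a spectrum-aware combination of the block projection formula with the polynomial bounds of Lemma~\ref{lem:musco15}, is the main technical novelty. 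Elimination of the $\log n$ factor from $d$ in the warm-start bound, by contrast, follows once one notes that $\beta$ is a single fixed constant rather than the $\sqrt{n}$-scale quantity arising from Gaussian starts.
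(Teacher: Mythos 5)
Your overall scaffolding (take $\C$ inside $\range(\phi(\M\M^T)\X)\subset\range(\K)$, combine a Chebyshev decay/growth argument with the Gaussian initialization bound) matches the paper in spirit, but the core reduction you propose does not work in the gap-independent setting, and the two places where it breaks are exactly the places the paper's proof (Theorem~\ref{thm:lanczos_independent} via Lemmas~\ref{lem:proj_contraction}, \ref{lem:lanczos_independent}, \ref{lem:angle2norm1}, \ref{lem:angle2norm2}, plus Theorem~\ref{thm:initialization}) is engineered to avoid. First, your normalization $\ti\C=\U_k+\U_{-k}\F$ with $\F=\phi(\Si_{-k}^2)\W_2\W_1^\dag\phi(\Si_k^2)^{-1}$ requires a lower bound on $\phi(\sigma_i^2)$ for $i\le k$. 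Lemma~\ref{lem:musco15} gives $\phi(x)>x$ only for $x\ge(1+\gamma)\alpha$; on the transition band $[\alpha,(1+\gamma)\alpha)$ the only available bound is $\phi(x)\ge\phi(\alpha)\approx\alpha/T_d(1+\gamma)$, which is exponentially small in $d\sqrt{\gamma}$. Since no gap assumption prevents some $\sigma_i^2$ with $i\le k$ from lying in that band, $\|\phi(\Si_k^2)^{-1}\|_2$ can be as large as roughly $2^{d\sqrt{\gamma}}/\alpha$, exactly cancelling the $\alpha\,2^{-(d\sqrt{\gamma}-1)}$ decay of $\|\phi(\Si_{-k}^2)\|_2$; your claimed bound $\|\F\|_2\le\beta\,2^{-(d\sqrt{\gamma}-1)}$ therefore does not follow. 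Moreover, the correct structural bound for $\ti\C=\U_k+\U_{-k}\F$ weights the perturbation by the top block, i.e.\ the error involves $\|\F\Si_k\|_2^2$ (as in the Halko--Martinsson--Tropp argument), not $\|\Si_{-k}\F\|_2$ as you state, which makes the uncontrolled $\Si_k\phi(\Si_k^2)^{-1}$ factor unavoidable in your route. Your warm-start choice $\gamma=1$ is a symptom of this: it would give $d=\OM(\log(\beta/\epsilon))$ with no $\epsilon^{-1/2}$ factor, contradicting the theorem (and known lower bounds), precisely because it ignores the band $[\alpha,2\alpha)$. The paper instead never inverts $\phi$ on the top block: Lemma~\ref{lem:lanczos_independent} truncates to the index set $\SM=\{i\le k:\sigma_i^2\ge(1+\gamma)\alpha\}$ and pays the additive term $\max\{\sigma_{k+1}^2,(1+\gamma)\alpha\}$ (this is why $\gamma=\epsilon$ and hence $d\propto\epsilon^{-1/2}$), and Lemmas~\ref{lem:angle2norm1}--\ref{lem:angle2norm2} bound $\|(\I_m-\C\C^\dag)\phi(\M_k\M_k^T)\|_2$ through the exact cancellation $(\V_1^T\X)(\V_1^T\X)^\dag=\I_k$ rather than through $\phi(\Si_k^2)^{-1}$.

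Second, your random-start accounting is not realized by your choice of $\C$. To get $\sigma_{k+1}^2+\epsilon\sigma_{p+1}^2$ one must make the middle spectral block (indices $k+1,\dots,p$) disappear from the error, and the paper does this constructively: Theorem~\ref{thm:initialization} produces $\Z\in\RB^{p\times k}$ with orthonormal columns such that $\u_i^T\X\Z=\0$ for $i=k+1,\dots,p$, the sketch is taken as $\C=\phi(\M\M^T)\X\Z$, and Lemma~\ref{lem:angle2norm2} then yields the factor $\|\phi(\M_{-p}\M_{-p}^T)\|_2$ with $\alpha=\sigma_{p+1}^2$. Your $\C=\phi(\M\M^T)\X\W_1^\dag$ with $\W_1=\U_k^T\X$ has no such annihilation property: the rows of $\F$ indexed by $k+1,\dots,p$ carry the factors $\phi(\sigma_i^2)$ with $\sigma_i^2$ possibly well above $\alpha=\sigma_{p+1}^2$, so they are large rather than small, and "folding them into $\sigma_{k+1}^2$" is not justified by any bound you state. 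In short, the missing ingredients are precisely the paper's two key devices: the null-space matrix $\Z$ from the initialization analysis, and the $\SM$-truncation/operator-comparison argument that replaces inversion of $\phi$ on the top block.
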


\begin{proof}
The theorem follows from Theorem~\ref{thm:initialization} (the analysis of random initialization)
and Theorem~\ref{thm:lanczos_independent} (the analysis of convergence).
\end{proof}

By nearly the same proof (see Section~\ref{sec:convergence_independent_power}),
we can show gap-independent bound of the power method.
The bound is stronger than the existing results
because $\sigma_{p+1} \leq \sigma_{k+1}$ for $p \geq k$.
The analysis of warm-started power method can be shown in the same way,
so here we do not elaborate.

\begin{theorem}[Power Method] \label{thm:independent2}
Let $\M \in \RB^{m\times n}$ be any matrix and $\X \in \RB^{m\times p}$ have full column rank.
If $\X$ is a standard Gaussian matrix and
\[
t \; = \; \OM \Big(\frac{ \log n }{ {\epsilon} }\Big),
\]
then there exist an $m\times k$ ($k \leq p$) matrix $\C$ with $\range (\C) \subset \range ((\M \M^T)^t \X )$ such that
\begin{align*}
\big\| \M - \C \C^\dag \M \big\|_2^2 \; \leq \;  \| \M - \M_k\|_2^2 + \epsilon \|\M- \M_p \|_2^2
\end{align*}
holds with high probability.
\end{theorem}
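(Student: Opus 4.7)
The plan is to re-use the machinery developed for Theorem~\ref{thm:independent} almost verbatim, replacing the rescaled Chebyshev polynomial used there by the rescaled monomial
\[
\phi(x) \; = \; \bigl((1+\gamma)\alpha\bigr)^{1-t}\, x^t,
\qquad x \geq 0,
\]
with parameters $\alpha > 0$ and $\gamma \in (0,1]$ to be fixed. The rescaling is free to choose because $\range(\phi(\M\M^T)\X) = \range((\M\M^T)^t \X)$; the single-monomial form is \emph{forced} by the fact that power-method sketches have access only to $(\M\M^T)^t \X$ and not to the full Krylov space. The first step is to verify the analogs of the five items of Lemma~\ref{lem:musco15} for this polynomial. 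Items (i), (ii), and (v) are immediate because $\phi$ is a positive monomial; item (iii) follows from the normalization $\phi((1+\gamma)\alpha) = (1+\gamma)\alpha$ together with the monotonicity of $x^t$ on $[0,\infty)$. The only non-trivial check is the monomial analog of item (iv),
\[
|\phi(x)| \; \leq \; \frac{\alpha}{(1+\gamma)^{t-1}}
\quad\text{for all } x\in[0,\alpha],
\]
proven by $\phi(x) \leq ((1+\gamma)\alpha)^{1-t}\alpha^t = \alpha\,(1+\gamma)^{1-t}$. This bound plays exactly the role that the Chebyshev bound $\alpha/2^{d\sqrt{\gamma}-1}$ plays in the block Lanczos proof, and is weaker by precisely the Chebyshev acceleration factor.

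The second step is to invoke the convergence argument of Section~\ref{sec:convergence_independent} (the geometric reduction that proves Theorem~\ref{thm:lanczos_independent}) with $\phi$ in place of the Chebyshev polynomial. That argument takes any polynomial satisfying properties (i)--(v) and a starting matrix $\X$ with bounded $\tan\theta_k(\U_k,\X)$, and returns an $m\times k$ matrix $\C \subset \range(\phi(\M\M^T)\X) = \range((\M\M^T)^t \X)$ together with an error bound whose only polynomial-dependent piece is $\sup_{x\in[0,\alpha]}|\phi(x)|$. Setting the threshold $\alpha = \sigma_{p+1}^2$ aligns the tail of $\phi$ with $\|\M-\M_p\|_2^2$, matching the shape of the target bound. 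Combining with Theorem~\ref{thm:initialization} applied to standard Gaussian $\X$, the initialization factor $\tan^2\theta_k(\U_k,\X)$ is $\poly(n)$ with high probability.

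The third step is parameter calibration. Taking $\gamma = \epsilon$ and using $\log(1+\epsilon) \geq \epsilon/2$ for $\epsilon \in (0,1]$, one has $(1+\epsilon)^{-(t-1)} \leq 2^{-(t-1)\epsilon/(2\ln 2)}$, so $t = \OM(\log n/\epsilon)$ suffices to drive the monomial-decay factor below any prescribed $\poly(1/n)$, absorbing the Gaussian initialization cost and leaving the advertised residual $\epsilon\|\M-\M_p\|_2^2$. The main obstacle is not the polynomial calculation, which is elementary, but rather making sure that the Section~\ref{sec:convergence_independent} proof has been organized in a polynomial-agnostic manner so that items (i)--(v) are its only hooks into Lemma~\ref{lem:musco15}; once this is confirmed, substituting $\phi(x) = cx^t$ for the Chebyshev polynomial yields Theorem~\ref{thm:independent2} directly, and the rate $\OM(\log n/\epsilon)$ is precisely the Lanczos rate $\OM(\log n/\sqrt{\epsilon})$ stripped of Chebyshev acceleration.
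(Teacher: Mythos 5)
Your proposal is correct and follows essentially the paper's own route (Section~\ref{sec:convergence_independent_power}, Theorem~\ref{thm:power_independent}): your rescaled monomial $\phi(x)=((1+\gamma)\alpha)^{1-t}x^t$ is algebraically the same as the paper's $f(x)=(1+\gamma)\alpha\,(x/\alpha)^t/(1+\gamma)^t$, and you verify the same properties, feed them into the same polynomial-agnostic machinery (Lemma~\ref{lem:lanczos_independent} together with Lemma~\ref{lem:angle2norm2}) with $\alpha=\sigma_{p+1}^2$, $\gamma=\epsilon$, and finish with Theorem~\ref{thm:initialization} and the calibration $t=\OM(\log n/\epsilon)$. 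The only cosmetic slip is that the initialization quantity entering the bound is $\tan\theta(\U_k,\X\Z)$ for the $\Z$ of Theorem~\ref{thm:initialization} (needed so the tail is governed by $\sigma_{p+1}$), not $\tan\theta_k(\U_k,\X)$, but this does not change the argument.
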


\subsection{Comparisons to the Existing Gap-Independent Bounds} \label{sec:comparison_independent}

This subsection compares our work with the existing results.
The error bounds and number of iterations are summarized in Table~\ref{tab:comparison_independent}.

\begin{table}[t]\setlength{\tabcolsep}{0.3pt}
\caption{The table compares the error bounds of the rank $k$ approximations and
the number of iterations to attain the bounds.
Here $p$ is the block size, and it holds that $p \geq k$.}
\label{tab:comparison_independent}
\begin{center}
\begin{small}
\begin{tabular}{c c c  c c  }
\hline
                &\multicolumn{2}{c}{this work}  & \multicolumn{2}{c}{previous work} \\
                & \#iterations  &  error     & \#iterations  &  error     \\
\hline
power method    & ~~~$\OM (\epsilon^{-1} \log n)$~~~ & ~~~$\sigma^2_{k+1} + \epsilon \sigma^2_{p+1} $~~~
                & ~~~$\OM (\epsilon^{-1} \log n)$~~~ & ~~~$(1+\epsilon) \sigma^2_{k+1}$~~~ \\
block Lanczos   & ~~~$\OM (\epsilon^{-1/2} \log n)$~~~ & ~~~$\sigma^2_{k+1} + \epsilon \sigma^2_{p+1} $~~~
                & ~~~$\OM (\epsilon^{-1/2} \log n)$~~~ & ~~~$(1+\epsilon) \sigma^2_{k+1}$~~~ \\
\hline
\end{tabular}
\end{small}
\end{center}
\end{table}

The seminal work of Halko, Martinsson, \& Tropp \cite{halko2011finding}
showed a gap-independent, spectral norm bound of Gaussian random projection and its power iteration extension.
Later on, Boutsidis \etal \cite{boutsidis2011near} made improvements and showed that
with Gaussian random initialization $\X \in \RB^{n\times p}$ ($p\geq k$) and
$t = \OM (\epsilon^{-1} \log n)$
power iterations,
the $(1+\epsilon)$ relative-error spectral norm bound holds with high probability (w.h.p.):
\[
\big\| \M - \PM^2_{\C, k} (\M) \big\|_2^2
\; \leq \; (1+\epsilon) \big\| \M - \M_k \big\|_2^2,
\]
where $\C = \M (\M^T \M)^t \X \in \RB^{m\times p}$ is the output of the power iteration.
In practice, people usually set $p$ several times greater than $k$.
Unreasonably, the benefit of setting $p$ greater than $k$ is not reflected in such an error bound.
This work improves the error bound to
\[
 \big\| \M - \PM^2_{\C, k} (\M) \big\|_2^2
\; \leq \; \big\| \M - \M_k \big\|_2^2 + \epsilon \big\| \M - \M_p \big\|_2^2.
\]
It is worth mentioning that the previous work \cite{boutsidis2011near,halko2011finding,woodruff2014sketching}
makes use of the inequality $\| \PP \M \|_2 \leq \| \PP (\M \M^T)^t \M\|_2^{1/(2t+1)}$, where $\PP$ is an arbitrary orthogonal projector,
to show the convergence of the power method.
Our result is obtained by very different techniques.

Discarding the intermediate outputs of the power iterations is actually a waste of computation.
The very recent work of Musco \& Musco \cite{musco15stronger}
showed that with only
$ d = \OM (\epsilon^{-1/2} \log n)$
power iterations, the column space of
\[
\K \; = \; \big[\M \X , \; \M (\M^T \M) \X , \; \cdots, \; \M (\M^T \M)^d \X \big]
\]
almost contains the top $k$ principal components of $\M$. Specifically,
\[
\big\| \M - \PM^2_{\K, k} (\M) \big\|_2^2
\; \leq \; (1+\epsilon) \big\| \M - \M_k \big\|_2^2
\]
holds w.h.p.
Our analysis delivers a stronger result: with the same $d$, the inequality
\[
\big\| \M - \PM^2_{\K, k} (\M) \big\|_2^2
\; \leq \;  \big\| \M - \M_k \big\|_2^2 + \epsilon \big\| \M - \M_p \big\|_2^2
\]
holds w.h.p.

Furthermore, in the analysis of \cite{musco15stronger},
the term $\log n$ arises from both of the
random initialization and convergence analysis.
Though the $\log n$ term is inevitable in the analysis of random initialization,
it should be avoided in the convergence analysis.
Otherwise, warm start and random start simply have the same bound,
which is not in accordance with people's empirical experience \cite{mazumder2010spectral,wei2010block}.
We eliminate the $\log n$ term in the convergence bound (see Theorem~\ref{thm:independent})
and obtain the first gap-independent warm-start bound.

\subsection{Gap-Dependent Bounds}

For the sake of completeness, we establish spectral gap-dependent bounds.
We show that to attain the $\sigma_{k+1}^2 + \epsilon \sigma_{p+1}^2$ matrix norm bound,
the power method and the block Lanczos method respectively need
\begin{eqnarray*}
t & = & \frac{\log \big(\frac{ \sqrt{n-p} + \sqrt{p} + \alpha}{ \epsilon (\sqrt{p} - \sqrt{k} - \alpha) } \big)}{2\log (\sigma_k / \sigma_{p+1})}
\; = \; \OM \bigg( \frac{\log (n/\epsilon)}{\log (\sigma_k / \sigma_{p+1})} \bigg) ,\\
d & = & \frac{1 + \log_2 \big(\frac{ \sqrt{n-p} + \sqrt{p} + \alpha}{ \epsilon (\sqrt{p} -  \sqrt{k}- \alpha) } \big)}{\sqrt{ (\sigma_k / \sigma_{p+1})^2 - 1}}
\; = \; \OM \bigg( \frac{ \log (n/\epsilon)}{\sqrt{  \sigma_k / \sigma_{p+1}  - 1}} \bigg)
\end{eqnarray*}
iterations, where $\alpha$ is a parameter that controls the failure probability.
Li \& Zhang (2013) \cite{li2013convergence} established similar convergence analysis,
but their analysis lacks initialization analysis.
We formally show our results in Section~\ref{sec:dependent}.

\section{Analysis of Random Initialization} \label{sec:initialization}

We propose the Gaussian random matrix $\X \in \RB^{n\times p}$ as the initial guess
and bound the principal angle between $\range(\X)$ and any $k$ ($\leq p \leq n$) dimensional subspace $\range (\U_k)$.
It is worth mentioning that Theorem~\ref{thm:initialization} is more interesting than
the bound on $\tan \theta_k (\U_k, \X)$, which has been studied in  previous
work \cite{halko2011finding,gu2015subspace,hardt13noisy}.
In fact, the matrix $\X \Z$ in Theorem~\ref{thm:initialization} has special structure
which can make the convergence of power iteration and block Lanczos faster.

\begin{theorem} \label{thm:initialization}
Let $\U = [\U_k, \U_{-k}] = [\u_1 , \cdots, \u_n] \in \RB^{n\times n}$ be an orthogonal matrix,
where $\U_k  $ and $\U_{-k} $
are $n\times k$ and $n\times (n-k)$ matrices.
Let $\X$ be an $n\times p$ matrix with full column rank
and $\C = \U^T \X \in \RB^{n\times p}$.
We partition the rows of $\C$ by
\[
\C^T \; = \;
\big[ \underbrace{\C_1^T}_{p\times k} ,\quad \underbrace{\C_2^T}_{p\times (p-k)} ,\; \underbrace{\C_3^T}_{p\times (n-p)} \big] .
\]
If $k< p$, then
\begin{itemize}
\item
    There exists a $p\times k$ matrix $\Z$ with orthonormal columns such that
    $\range (\Z) \subset \nul (\C_{2})$¡£
    Equivalently, $\u_i^T \X \Z = \0$ for $i = k+1, \cdots, p$.
\item
    Let $\U_{-p} = [\u_{p+1},  \cdots, \u_n] \in \RB^{n\times (n-p)}$.
    It holds that
    \[
    \underbrace{\U_{-k}^T \X \Z}_{(n-k)\times k} \; = \; \left[
                              \begin{array}{c}
                                \0_{(p-k) \times k} \\
                                \U_{-p}^T \X \Z \\
                              \end{array}
                            \right]
    \]
    and that $\|\U_{-k}^T \X \Z \Z^T \w\|_2 = \| \U_{-p}^T \X \Z \Z^T \w\|_2 $ for any $\w \in \RB^{p}$.
\item
    Further assume that $\X$ is a standard Gaussian matrix.
    Then the principal angle between $\range (\U_k)$ and $\range(\X \Z)$ satisfies
    \[
    \tan \theta (\U_k, \X \Z)
    \; \leq \; \frac{ \sqrt{n-p} + \sqrt{p} + \alpha }{\sqrt{p} -  \sqrt{k} - \alpha}
\]
with probability  at least $1 - 2 \exp (- \alpha^2/2)  $.
\end{itemize}
\end{theorem}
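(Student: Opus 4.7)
The theorem splits naturally into its three bullets; the first two are purely linear-algebraic while only the third uses Gaussianity. For the first bullet, I note that $\C_2 \in \RB^{(p-k)\times p}$ has nullity at least $k$, so one can take $\Z$ to be any $p\times k$ matrix whose orthonormal columns span a $k$-dimensional subspace of $\nul(\C_2)$ (for instance, right singular vectors of $\C_2$ with zero singular value). For the second bullet, the natural block partition $\U_{-k}^T = [(\u_{k+1},\ldots,\u_p)^T;\;\U_{-p}^T]$ gives $\U_{-k}^T\X = [\C_2;\C_3]$; right-multiplying by $\Z$ kills the top block because $\C_2\Z=\0$, and the norm identity follows immediately because appending zero rows leaves the Euclidean norm unchanged.

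For the third bullet, my plan has four steps. First, rotational invariance: since $\U$ is orthogonal and $\X$ is standard Gaussian, $\C=\U^T\X$ is standard Gaussian as well, so the disjoint row blocks $\C_1,\C_2,\C_3$ are mutually independent standard Gaussian matrices of sizes $k\times p$, $(p-k)\times p$, and $(n-p)\times p$; because $\Z$ is constructed from $\C_2$ alone, it is independent of $(\C_1,\C_3)$. Second, a formula for the tangent: because $\X\Z$ has rank $k$, the min over $\PP\in\PM_k$ in the definition of $\tan\theta$ collapses to $\PP=\I_k$, and Part 2 gives $\tan\theta(\U_k,\X\Z) = \max_{\w\neq\0}\|\C_3\Z\w\|_2/\|\C_1\Z\w\|_2$. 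Third, a deterministic reduction that bounds the numerator by $\|\C_3\|_2\,\|\w\|_2$ (since $\|\Z\|_2=1$) and the denominator below by $\sigma_{\min}(\C_1)\,\|\w\|_2$, yielding $\tan\theta\leq\|\C_3\|_2/\sigma_{\min}(\C_1)$. Fourth, Davidson--Szarek tail bounds for Gaussian singular values: $\|\C_3\|_2\leq\sqrt{n-p}+\sqrt{p}+\alpha$ and $\sigma_{\min}(\C_1)\geq\sqrt{p}-\sqrt{k}-\alpha$ each hold with failure probability $\leq e^{-\alpha^2/2}$; a union bound delivers the stated probability $1-2e^{-\alpha^2/2}$.

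The hard part will be the denominator bound in the third step: $\Z\w$ is a unit vector in $\nul(\C_2)$, not in the row space of $\C_1$, so the pointwise inequality $\|\C_1 v\|_2 \geq \sigma_{\min}(\C_1)\|v\|_2$ that I would like to invoke is not valid without further argument. Worse, a naive submultiplicative bound replaces $\sigma_{\min}(\C_1)$ by $\sigma_{\min}(\C_1\Z)$, which is disastrous because $\C_1\Z$ is effectively a $k\times k$ iid Gaussian matrix (conditional on $\Z$), whose smallest singular value is of order $1/\sqrt{k}$ rather than $\sqrt{p}-\sqrt{k}$. What will save the argument is the structural fact from the first step that $\nul(\C_2)$ is independent of $\C_1$, so $\range(\Z)$ and the row space of $\C_1$ are in generic position; I expect the proof to exploit this independence by passing through the representation $Y=\Z(\C_1\Z)^{-1}$ (a right inverse of $\C_1$ whose columns are confined to $\nul(\C_2)$) so that the effective denominator inherits $\sigma_{\min}(\C_1)$ rather than $\sigma_{\min}(\C_1\Z)$. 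The zero-block structure from Part 2 is the companion ingredient that simultaneously confines the numerator to the $(n-p)\times p$ matrix $\C_3$ rather than the larger $(n-k)\times p$ matrix $[\C_2;\C_3]$, producing the paper's improved $\sqrt{n-p}$ in the numerator in place of the classical Halko--Martinsson--Tropp $\sqrt{n-k}$.
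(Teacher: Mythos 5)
Your handling of the first two bullets, and the scaffolding of the third (rotational invariance, the collapse of the $\min$ over $\PM_k$ to the identity, the reduction to $\max_{\|\w\|_2=1,\,\Z\Z^T\w=\w}\|\C_3\w\|_2/\|\C_1\w\|_2$, and the Davidson--Szarek/Vershynin tail bounds with a union bound) coincides with the paper's proof. The step you single out as ``the hard part'' is precisely the step the paper performs with no further argument: it asserts $\|\C_1\w\|_2\geq\sigma_k(\C_1)\,\|\w\|_2$ for unit $\w\in\range(\Z)$, which, as you correctly observe, is not a valid pointwise inequality for the fat matrix $\C_1\in\RB^{k\times p}$, because $\range(\Z)=\nul(\C_2)$ has nothing to do with the row space of $\C_1$. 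So you have not overlooked an ingredient that the paper supplies --- the paper supplies none at this point.

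The gap in your proposal is therefore genuine, and the repair you sketch cannot close it. The quantity you must lower-bound is exactly $\min_{\|\v\|_2=1}\|\C_1\Z\v\|_2=\sigma_{\min}(\C_1\Z)$, and passing through $\Z(\C_1\Z)^{-1}$ only re-expresses the same object, since $\tan\theta(\U_k,\X\Z)=\|(\C_3\Z)(\C_1\Z)^{-1}\|_2$. The independence of $\Z$ from $\C_1$ does not let the denominator ``inherit'' $\sigma_{\min}(\C_1)$; on the contrary, it makes $\C_1\Z$ an exact $k\times k$ standard Gaussian (conditionally on $\Z$), whose least singular value is of order $k^{-1/2}$ with constant probability, far below $\sqrt{p}-\sqrt{k}-\alpha$. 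The case $k=1$ is already decisive: since $\nul(\C_2)$ is one-dimensional almost surely, $\Z$ is forced, $\C_1\Z$ is a single $N(0,1)$ variable (typically below $3$ in absolute value), while the claimed denominator is $\sqrt{p}-1-\alpha$; with, say, $p=100$, $\alpha=3$, $n=10^6$, the asserted inequality fails with probability close to one rather than holding with probability $1-2\exp(-\alpha^2/2)$. In other words, your own observation about $\sigma_{\min}(\C_1\Z)$ does not merely flag a difficulty in your plan --- it exposes a flaw in the paper's proof of the third bullet itself. A correct statement along these lines must either give up the special $\Z$ and bound $\tan\theta_k(\U_k,\X)$ via $\|\U_{-k}^T\X(\U_k^T\X)^\dag\|_2$ (where the pseudoinverse confines the denominator to the row space of $\U_k^T\X$, recovering $\sqrt{p}-\sqrt{k}-\alpha$ but with numerator $\sqrt{n-k}$), or keep $\Z$ and accept a denominator governed by the least singular value of a square $k\times k$ Gaussian, with its correspondingly weaker tail.
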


\begin{proof}(1)
Since $\C_2$ is a $(p-k)\times p$ matrix,
we have $\nullity (\C_2) = p - \rk (\C_2) \geq k$.
So there exist matrices $\Z \in \RB^{p\times k}$ with orthonormal columns such that $\range (\Z) \subset \nul (\C_2)$.
Since $\C = \U^T \X$,
it is easy to see that $\C_2 \Z = \0$ is equivalent to
$\u_i^T \X \Z = \0$ for $i = k+1, \cdots,  p$.

(2)
The second property follows directly from
that $\u_i^T \X \Z = \0_{1\times k}$ for $i = k+1,  \cdots,  p$.

(3)
If $\X$ is a standard Gaussian matrix, then $\C = \U^T \X$ is also a standard Gaussian matrix,
and so is any submatrix of $\C$.
%It is well know that if $p>k>0$, the $(p-k)$-th singular value of an
%$(p-k)\times p$ standard Gaussian matrix is bounded away from zero, almost surely.
%Thus $\nullity (\C_2) = k$, which indicates that
%the rank $k$ orthogonal projector $\Z \Z^T$ is uniquely determined by $\C_2$ (though $\Z$ is not unique):
%\[
%\Z \Z^T \; = \; \I_p - \C_2^T (\C_2^T)^\dag .
%\]
%Since the entries of a standard Gaussian matrix are independent,
%the submatrices $\C_1$ and $\C_2$ are independent.
%Let $\w$ be an arbitrary vector such that $\|\w\|_2=1$ and $\Z \Z^T \w = \w$.
%Then
%\begin{eqnarray} \label{eq:thm:initialization:1}
%\EB \big\| \C_1 \Z \Z^T \w \big\|_2^2
%& = & \EB \Big[ \EB  \big\| \C_1 \Z \Z^T \w \big\|_2^2 \: \Big| \: \C_2 \Big]
%\; = \;  \EB  \Big[ k  \|\Z \Z^T \w \|_2^2 \: \Big| \: \C_2 \Big]
%\; = \; k,
%\end{eqnarray}
%where the second equality follows from that $\Z \Z^T \w$ is deterministic given $\C_2$
%and that $\C_1$ is independent of $\C_2$.
%Then we use the Chernoff bound to show
%\[
%\PB \Big\{ \| \C_1 \Z \Z^T \w \|_2^2 \leq (1-\delta) k \Big\}
%\; \leq \; \exp (- \delta^2 k / 2) .
%\]
%Letting $\delta = 0.5$, we obtain that $\| \C_1 \Z \Z^T \w \|_2 \geq  \sqrt{k/2}$ with probability at least $1 - \exp(-k/8)$.
Since $\w = \Z \Z^T \w$, it follows from the second property that
$\| \U_{-k}^T \X \w \|_2 = \| \U_{-k}^T \X \Z \Z^T \w \|_2 = \| \U_{-p}^T \X \Z \Z^T \w \|_2 = \| \U_{-p}^T \X  \w \|_2$.
Therefore,
\begin{align*}
& \tan \theta (\U_k, \X \Z\Z^T)
\;= \; \max_{\substack{\|\w\|_2=1  \\ \Z\Z^T\w = \w} }
         \frac{  \| \U_{-k}^T \X  \w \|_2 }{  \|  \U_{k}^T \X  \w \|_2 }
\; = \; \max_{\substack{\|\w\|_2=1  \\ \Z\Z^T\w = \w} }
         \frac{  \| \U_{-p}^T \X  \w \|_2 }{  \|  \U_{k}^T \X  \w \|_2 }\\
& = \;  \max_{\substack{\|\w\|_2=1  \\ \Z\Z^T\w = \w} }
         \frac{  \|  \C_3  \w \|_2 }{  \|  \C_1  \w \|_2 }
\; \leq \; \max_{\substack{\|\w\|_2=1  \\ \Z\Z^T\w = \w} }
         \frac{ \| \C_3 \|_2 \, \| \w \|_2 }{ \sigma_k (  \C_1 ) \| \w \|_2 }
\; = \; \frac{  \|  \C_3 \|_2  }{  \sigma_k (  \C_1 ) } ,
\end{align*}
where $\w^\star$ is the optimizer.
Since $\C_3 \in \RB^{(n-p)\times p}$ is standard Gaussian,
it follows from \cite[Corollary 5.35]{vershynin2010introduction} that
\[
\| \C_3 \|_2 \; \leq \;
\sqrt{n-p} +  \sqrt{p} + \alpha
\]
holds with probability at least $1 -  \exp (- \alpha^2/2)$.
Since $\C_1 \in \RB^{k\times p}$ is standard Gaussian,
it follows from \cite[Corollary 5.35]{vershynin2010introduction} that
\[
\sigma_k (  \C_1 ) \; \geq \; \sqrt{p} -  \sqrt{k} - \alpha
\]
holds with probability at least $1 -  \exp (- \alpha^2/2)$.
Finally, using the union bound, we obtain
\[
\tan \theta (\U_k, \X \Z\Z^T)
\; \leq \; \frac{  \| \C_3 \|_2  }{  \sigma_k (  \C_1 )  }
\; \leq \; \frac{ \sqrt{n-p} +   \sqrt{p} + \alpha}{ \sqrt{p} -  \sqrt{k} - \alpha}
\]
with probability at least $1 - 2 \exp (-\alpha^2/2)$.
\end{proof}
%
%
%Compared with $\tan \theta (\U_k, \X \Z )$,
%the upper bound on $\tan \theta_k (\U_k , \, \X )$ is much easier to analyze.
%We simply cite the result of \cite{hardt13noisy} in the following,
%which will be used in the proof of the gap-independent bound.
%
%
%
%\begin{lemma}[Lemma 2.5 of \cite{hardt13noisy}] \label{lem:hardt2015}
%For an arbitrary $n\times k$ orthonormal matrix $\U_k$ and random Gaussian matrix $\X \in \RB^{n\times p}$, we have
%\[
%\tan \theta_k \big(\U_k , \, \X \big)
%\; \leq \;
%\frac{e \sqrt{n}}{\sqrt{p} - \sqrt{k-1}}
%\]
%with probability at least $1- \exp \big( -\Omega (p) \big) - \exp \big(- \Omega (n)\big)$.
%\end{lemma}
%

\section{Gap-Independent Convergence Bounds} \label{sec:convergence_independent}

Section~\ref{sec:convergence_independent_lanczos} and \ref{sec:convergence_independent_power}
establish gap-independent convergence bounds for the block Lanczos method and the power method, respectively.
The only difference in their proofs is the use of different matrix functions.

\subsection{The Block Lanczos Method}  \label{sec:convergence_independent_lanczos}

Theorem~\ref{thm:lanczos_independent} establishes gap-independent spectral norm bound for the block Lanczos method.
The theorem demonstrates $\OM (\frac{1}{d^2})$ convergence rate,
where $d$ is the number of power iterations.

%The theorem has two conclusions:
%\begin{enumerate}
%\item
%    If we make a mild assumption on the initial error,
%    we will obtain a weak result:
%    the rank-$k$ approximation is worse than the best possible result by $\epsilon \|\M - \M_k\|_2^2$.
%    This result is useful in both of the random-start and warm-start cases.
%\item
%    If we make a strong assumption on the initial error,
%    we will obtain a very strong result:
%    the rank-$k$ approximation is worse than the best possible result by merely $\epsilon \|\M - \M_p\|_2^2$,
%    which can be much smaller than $\epsilon \|\M - \M_k\|_2^2$.
%    This result is only useful in the random-start case.
%\end{enumerate}

\begin{theorem} \label{thm:lanczos_independent}
Let $\M \in \RB^{m\times n}$ be any matrix with SVD defined by \eqref{eq:def_svd_M},
and $\X \in \RB^{m\times p}$ have full column rank and satisfy $\rk (\U_{k}^T \X )  = k$, ($k\leq p$).
Let $\phi $ be the function (parameterized by $d$) defined in Section~\ref{sec:chebyshev}.
\begin{itemize}
\item
Let
\[
d = \frac{1}{\sqrt{\epsilon}}+ \frac{1}{\sqrt{\epsilon}} \log_2 \frac{  \tan \theta_k (\U_k, \X ) }{{\epsilon}} .
\]
Then there exists an $m\times k$ matrix $\C$ satisfying $\range (\C) \subset$ $ \range \big(\phi(\M \M^T) \X\big)$ and
\[
\big\| \M - \C \C^\dag \M  \big\|_2^2 \; \leq \;
(1+2 \epsilon) \sigma_{k+1}^2  .
\]
\item
Let $\Z \in \RB^{p\times k}$ have orthonormal columns and satisfy $\range (\X \Z) \subset $ $\nul \big( [\u_{k+1}^T , $ $\cdots, \u_{p}^T]^T \big)$.
Let
\[
d = \frac{1}{\sqrt{\epsilon}}+ \frac{1}{\sqrt{\epsilon}} \log_2 \frac{  \tan \theta  (\U_k, \X \Z ) }{{\epsilon}} .
\]
Then the $m\times k$ matrix $\C = \phi(\M \M^T) \X \Z $ satisfies
\[
\big\| \M - \C \C^\dag \M  \big\|_2^2 \; \leq \;
\max \Big\{ (1+2 \epsilon) \sigma_{p+1}^2 , \; \sigma_{k+1}^2 + \epsilon \sigma_{p+1}^2
\Big\}
\; \leq \; \sigma_{k+1}^2 + 2\epsilon \sigma_{p+1}^2 .
\]
\end{itemize}
\end{theorem}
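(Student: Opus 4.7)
The plan is to adapt the Halko-Martinsson-Tropp structural residual bound to the polynomial sketch $\C = \phi(\M\M^T)\X\Z$ and then exploit the zero block in $\U_{-k}^T\X\Z$ produced by Theorem~\ref{thm:initialization}, which will replace $\sigma_{k+1}^2$ by $\sigma_{p+1}^2$ in the structural tail. Both bullets will follow the same template; only the calibration of the Chebyshev parameters $\alpha,\gamma$ will differ.

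First I would write $\M = \U\Si\V^T$, set $\W = \X\Z$, and partition $\U^T\W = [\Phi_1^T,\Phi_2^T]^T$ with $\Phi_1 = \U_k^T\W$ invertible (by the rank hypothesis on $\U_k^T\X$). The defining property of $\Z$ zeros the first $p-k$ rows of $\Phi_2$, leaving $\Phi_2 = [\0;\Phi_{2b}]^T$ with $\Phi_{2b} = \U_{-p}^T\X\Z$; Theorem~\ref{thm:initialization} already identifies $\|\Phi_{2b}\Phi_1^{-1}\|_2 = \tan\theta(\U_k,\X\Z)$. I would then reparameterize $\C$ into $\F = \C\Phi_1^{-1}\phi(\Si_k^2)^{-1} = \U_k + \U_{-k}\E$, where $\E = \phi(\Si_{-k}^2)\Phi_2\Phi_1^{-1}\phi(\Si_k^2)^{-1}$; the zero block collapses $\U_{-k}\E$ into $\U_{-p}\E_b$ with $\E_b = \phi(\Si_{-p}^2)\Phi_{2b}\Phi_1^{-1}\phi(\Si_k^2)^{-1}$, so only the singular indices $\geq p+1$ remain in the error matrix. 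A short block calculation should then deliver
\[
\|\M - \C\C^\dag\M\|_2^2 \;\leq\; \max\bigl\{\sigma_{k+1}^2,\ \|\E_b\Si_k\|_2^2 + \sigma_{p+1}^2\bigr\},
\]
in which the replacement of $\sigma_{k+1}^2$ by $\sigma_{p+1}^2$ inside the second argument is the entire source of the improvement over the Musco-Musco bound.

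For the second bullet I would set $\alpha = \sigma_{p+1}^2$ and $\gamma = \epsilon$. When $\sigma_k^2 \geq (1+\epsilon)\sigma_{p+1}^2$ (clean gap), Lemma~\ref{lem:musco15}(iv) gives $\|\phi(\Si_{-p}^2)\|_2 \leq \sigma_{p+1}^2\cdot 2^{1-d\sqrt\epsilon}$ and (iii) gives $\|\phi(\Si_k^2)^{-1}\Si_k\|_2 \leq 1/\sigma_k$, from which the specified $d$ yields $\|\E_b\Si_k\|_2^2 \leq \epsilon\sigma_{p+1}^2$ and hence the bound $\sigma_{k+1}^2 + \epsilon\sigma_{p+1}^2$. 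When $\sigma_k^2 < (1+\epsilon)\sigma_{p+1}^2$ (no gap), $\sigma_{k+1}^2$ itself is $\leq (1+\epsilon)\sigma_{p+1}^2$, so I would rerun the estimate after truncating to the largest $q \leq k$ with $\sigma_q^2 \geq (1+\epsilon)\sigma_{p+1}^2$: the middle indices $(q,k]$ slide into the $\sigma_{k+1}^2$ arm of the max and the bound reduces to $(1+2\epsilon)\sigma_{p+1}^2$. The first bullet is the same argument with $\Z$ dropped: $\Phi_2$ now generically has no zero rows, the structural bound degrades to $\sigma_{k+1}^2 + \|\E\Si_k\|_2^2$, and the Chebyshev calibration at $\alpha = \sigma_{k+1}^2$ delivers $(1+2\epsilon)\sigma_{k+1}^2$.

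The hardest step will be the no-gap sub-case above, where $\sigma_k^2$ lands in the Chebyshev polynomial's unreliable band $[\alpha,(1+\gamma)\alpha]$ and the factor $1/\phi(\sigma_k^2)$ threatens to blow up. The saving maneuver, impossible without the $\Z$-structure, is that the indices $k+1,\ldots,p$ have already been eliminated from $\E_b$, so one may truncate the effective rank to $q \leq k$ and absorb the leftover middle singular values into the already-tolerable $\sigma_{k+1}^2$ term.
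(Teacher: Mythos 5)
Your proposal is essentially correct, and it reaches the theorem by a genuinely different route than the paper. You run a Halko--Martinsson--Tropp/Gu-style structural argument directly on the polynomial-filtered sketch: reparameterize $\C=\phi(\M\M^T)\X\Z$ as $\F=\C\Phi_1^{-1}\phi(\Si_k^2)^{-1}=\U_k+\U_{-k}\E$, use the zero rows of $\Phi_2$ forced by $\Z$ to collapse $\E$ onto the indices $>p$, and split off the middle block $\sum_{i=k+1}^p\sigma_i\u_i\v_i^T$ (whose column and row spaces are orthogonal to those of the remaining residual, since $\range(\F)\subset\spann(\U_k,\U_{-p})$) to get the max-form bound $\max\{\sigma_{k+1}^2,\;\sigma_{p+1}^2+\|\E_b\Si_k\|_2^2\}$; the no-gap regime is rescued by truncating to the largest $q$ with $\sigma_q^2\geq(1+\epsilon)\sigma_{p+1}^2$ and restricting $\F$ to its first $q$ columns. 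The paper instead never forms $\U_k+\U_{-k}\E$: it proves a matrix-function/Loewner-monotonicity inequality (Lemma~\ref{lem:proj_contraction} and Lemma~\ref{lem:lanczos_independent}, which contains the same truncation device as your $q$, there called $\hat k$) reducing the error to $\|(\I_m-\C\C^\dag)\phi(\M_k\M_k^T)\|_2+\max\{\sigma_{k+1}^2,(1+\gamma)\alpha\}$, and then converts that term to $\tan\theta\cdot\|\phi(\M_{-p}\M_{-p}^T)\|_2$ via the projection lemmas (Lemmas~\ref{lem:angle2norm1}--\ref{lem:angle2norm2}), calibrating $c=\tan\theta$ inside the degree. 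Your route buys a self-contained, more classical derivation in which the role of the zero block and of the Chebyshev band $[\alpha,(1+\gamma)\alpha]$ is very explicit; the paper's route buys reusable lemmas that also serve the power-method and gap-dependent theorems.

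Two small points you should tighten. First, the max-form structural inequality is not standard and needs the one-line justification above (orthogonality of both the column and row spaces of $\M_{\mathrm{mid}}$ to those of $(\I_m-\PP_\F)(\M_k+\M_{\mathrm{tail}})$); as a sum, $\sigma_{k+1}^2+\sigma_{p+1}^2+\|\E_b\Si_k\|_2^2$ would not give the stated constants. Second, the first bullet is not literally ``the same argument with $\Z$ dropped'': with the full $k\times p$ block $\Phi_1=\U_k^T\X$ one only gets $\|\Phi_2\Phi_1^\dag\|_2\geq\tan\theta_k(\U_k,\X)$, which is the wrong direction for the stated choice of $d$. You must first compress to $k$ columns via the minimizing rank-$k$ projector $\Pii=\argmin_{\PP\in\PM_k}\tan\theta(\U_k,\X\PP)$ and take $\Z_1$ with $\Z_1\Z_1^T=\Pii$ (exactly as the paper does), after which $\|\Phi_2\Phi_1^{-1}\|_2=\tan\theta_k(\U_k,\X)$ and your template, including the truncation to $q$ with threshold $(1+\epsilon)\sigma_{k+1}^2$, goes through. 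Finally, as in the paper, the degenerate cases $\tan\theta=\infty$ and $\sigma_{p+1}=0$ should be dispatched separately.
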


\begin{proof}
We will set $\alpha$ to be either $\sigma_{k+1}^2$ or $\sigma_{p+1}^2$,
so the inequality $\sigma_{k}^2 \geq \alpha$ always holds.
In addition, the matrix $\I_m - \C \C^\dag$ is an orthogonal projector,
and the function $\phi$ satisfies the requirements in Section~\ref{sec:proj_contraction}.
Thus we can apply Lemma~\ref{lem:lanczos_independent} to show that
\[
\big\| \big( \I_m - \C \C^\dag \big)\:  \M  \big\|_2^2 \; \leq \;
\big\| \big( \I_m - \C \C^\dag \big)\: \: \phi \big(\M_{{k}} \M_{{k}}^T\big)  \big\|_2
+ \max \big\{ \sigma_{k+1}^2 , \: (1+\gamma) \alpha \big\} .
\]
It remains to bound the term $\big\| \big( \I_m - \C \C^\dag \big)\: \phi \big(\M_{{k}} \M_{{k}}^T\big)  \big\|_2$
by Lemma~\ref{lem:angle2norm1} or Lemma~\ref{lem:angle2norm2}.

(1)
We let $\Pii$ be the rank $k$ orthogonal projector $\Pii = \argmin_{\PP \in \PM_k} \tan \theta (\U_k , \, \X \PP)$
and $\Z_1 \in \RB^{p\times k}$ have orthonormal columns and satisfy $\Z_1 \Z_1^T = \Pii$.
Let $\C = \phi(\M \M^T) \X \Z_1 \in \RB^{m\times k}$. Lemma~\ref{lem:angle2norm1} shows that
\begin{eqnarray*}
\big\| (\I_m - \C \C^\dag) \phi(\M_k \M_k^T) \big\|_2
& \leq &
\tan \theta_k \big(\U_k, \, \X \big) \: \big\| \phi(\M_{-k} \M_{-k}^T) \big\|_2 .
\end{eqnarray*}
We set $\alpha = \sigma_{k+1}^2$, $\gamma = \epsilon$, and
$ d = \epsilon^{-1/2} \log_2 (c / {\epsilon}) + \epsilon^{-1/2}$,
then it follows from Lemma~\ref{lem:musco15} that
\[
\big| \phi (\sigma_{i}^2)\big|
\; \leq \; \frac{\alpha}{2^{ d\sqrt{\gamma} - 1}}
\; = \; \frac{\sigma^2_{k+1}}{2^{ d\sqrt{\epsilon} - 1}}
\; \leq \; \frac{ \epsilon }{ c }\sigma^2_{k+1}
\]
for $i = k+1 , \cdots , n$. Thus
\begin{eqnarray*}
\big\| \phi \big(\M_{-k}\M_{-k}^T \big)\big\|_2
& = & \max \Big\{ \big| \phi (\sigma_{k+1}^2  ) \big| , \, \cdots , \, \big| \phi (\sigma_n^2  ) \big|  \Big\}
\; \leq \; \frac{\epsilon}{c} \sigma_{k+1}^2.
\end{eqnarray*}
We conclude that
\begin{eqnarray*}
\big\| (\I_m - \C \C^\dag) \phi(\M_k \M_k^T) \big\|_2
& \leq &
\frac{\epsilon}{c} \sigma_{k+1}^2 \: \tan \theta_k \big(\U_k, \, \X \big)
\; = \; \epsilon \sigma_{k+1}^2 ,
\end{eqnarray*}
where the equality follows by setting $c = \tan \theta_k \big(\U_k, \, \X \big)$.

\vspace{2mm}
(2)
We let $\Z_2 \in \RB^{p\times k}$ have orthonormal columns and satisfy $\range (\X \Z_2) \subset  \big( [\u_{k+1}^T , $ $\cdots, \u_{p}^T]^T \big)$.
Let $\C = \phi (\M \M^T) \X \Z_2 \in \RB^{m\times k}$.
Lemma~\ref{lem:angle2norm2} shows that
\[
\big\| (\I_m - \C \C^\dag) \phi(\M_k \M_k^T) \big\|_2
\; \leq \; \tan \theta (\U_k, \X \Z_2) \: \| \phi(\M_{-p} \M_{-p}^T) \|_2.
\]
We set $\alpha = \sigma_{p+1}^2$, $\gamma = \epsilon$, and
$ d = \epsilon^{-1/2} \log_2 (c / {\epsilon}) + \epsilon^{-1/2}$,
then it follows from Lemma~\ref{lem:musco15} that
\[
\big| \phi (\sigma_{i}^2) \big|
\; \leq \; \frac{\alpha}{2^{ d\sqrt{\gamma} - 1}}
\; = \; \frac{\sigma^2_{p+1}}{2^{ d\sqrt{\epsilon} - 1}}
\; \leq \; \frac{ \epsilon }{ c }\sigma^2_{p+1}
\]
for $i = p+1 , \cdots , n$.
We conclude that
\begin{eqnarray*}
\big\| (\I_m - \C \C^\dag) \phi(\M_k \M_k^T) \big\|_2
& \leq &
\frac{\epsilon}{c} \sigma_{p+1}^2 \: \tan \theta \big(\U_k, \, \X \Z_2 \big)
\; = \; \epsilon \sigma_{p+1}^2 ,
\end{eqnarray*}
where the equality follows by setting $c = \tan \theta \big(\U_k, \, \X \Z_2 \big)$.
\end{proof}

\subsection{The Power Method} \label{sec:convergence_independent_power}

We define the $t$ order monomial function
\[
f (x) = (1+\gamma) \alpha \frac{ (x/\alpha)^t  }{ (1+\gamma)^t }
\]
where $\alpha > 0$ and $\gamma \in (0, 1]$.
It is easy to show that
(i) $f(0) = 0$,
(ii) $f(x) > 0$ when $x > 0$,
(ii) $f(x) > x$ when $x \geq (1+\gamma) \alpha$,
and (iv) $f(x) \leq \alpha / 2^{\gamma(t-1)}$ when $x \leq \alpha$.
Then the proof is almost the same to that of the block Lanczos method.

\begin{theorem} \label{thm:power_independent}
Let $\M \in \RB^{m\times n}$ be any matrix with SVD defined by \eqref{eq:def_svd_M},
and $\X \in \RB^{m\times p}$ have full column rank and satisfy $\rk (\U_{k}\X )  = k$, ($k\leq p$).
Let $\Z \in \RB^{p\times k}$ have orthonormal columns and satisfy $\range (\X \Z) \subset $ $ \big( [\u_{k+1}^T , $ $\cdots, \u_{p}^T]^T \big)$.
Let
\[
t = 1+ \frac{1}{{\epsilon}} \log_2 \frac{  \tan \theta  (\U_k, \X \Z ) }{{\epsilon}} .
\]
Then the $m\times k$ matrix $\C = (\M \M^T)^t \X \Z $ satisfies
\[
\big\| \M - \C \C^\dag \M  \big\|_2^2
\; \leq \; \sigma_{k+1}^2 + 2\epsilon \sigma_{p+1}^2 .
\]
\end{theorem}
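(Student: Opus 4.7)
The proof plan mirrors the second case of Theorem~\ref{thm:lanczos_independent}, with the Chebyshev-based polynomial $\phi$ replaced by the monomial $f(x) = (1+\gamma)\alpha (x/\alpha)^t/(1+\gamma)^t$ introduced just before the statement. First I would observe that $f(\M\M^T)$ is a scalar multiple of $(\M\M^T)^t$, so $\range(f(\M\M^T)\X\Z) = \range(\C)$ and $\C\C^\dag$ coincides with the orthogonal projector onto $\range(f(\M\M^T)\X\Z)$. This permits the entire argument to proceed with $f(\M\M^T)\X\Z$ playing the role that $\phi(\M\M^T)\X\Z_2$ played in the Lanczos proof.

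Next I would confirm that $f$ obeys the four scalar properties invoked by Lemma~\ref{lem:lanczos_independent}. Items (i)--(iii) are immediate from the definition. For the decay property (iv), on $[0,\alpha]$ one has $f(x) \leq \alpha (1+\gamma)^{-(t-1)}$, and this is at most $\alpha / 2^{\gamma(t-1)}$ provided $\ln(1+\gamma) \geq \gamma \ln 2$ on $(0,1]$; the latter follows from concavity of $\ln(1+\gamma)$, using that equality holds at $\gamma=0$ and $\gamma=1$. With these properties in hand, Lemma~\ref{lem:lanczos_independent} applies verbatim to yield
\[
\big\| (\I_m - \C\C^\dag)\M \big\|_2^2 \; \leq \; \big\| (\I_m - \C\C^\dag) f(\M_k\M_k^T) \big\|_2 \; + \; \max\big\{ \sigma_{k+1}^2 , \, (1+\gamma)\alpha \big\} .
\]
Because $\range(\X\Z)$ lies in $\nul([\u_{k+1}^T,\ldots,\u_p^T]^T)$, Lemma~\ref{lem:angle2norm2} further delivers
\[
\big\| (\I_m - \C\C^\dag) f(\M_k\M_k^T) \big\|_2 \; \leq \; \tan\theta(\U_k,\X\Z)\cdot \big\| f(\M_{-p}\M_{-p}^T) \big\|_2 .
\]

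Finally, I would set $\alpha = \sigma_{p+1}^2$ and $\gamma = \epsilon$. Property (iv) then bounds $\|f(\M_{-p}\M_{-p}^T)\|_2$ by $\sigma_{p+1}^2 / 2^{\epsilon(t-1)}$, and the prescribed $t = 1 + \epsilon^{-1}\log_2(c/\epsilon)$ with $c = \tan\theta(\U_k,\X\Z)$ reduces this quantity to $\epsilon\sigma_{p+1}^2/c$. Multiplying by the angle factor $c$ and adding the $\max$ term produces
\[
\epsilon \sigma_{p+1}^2 + \max\big\{\sigma_{k+1}^2,\,(1+\epsilon)\sigma_{p+1}^2\big\} \; \leq \; \sigma_{k+1}^2 + 2\epsilon \sigma_{p+1}^2,
\]
as claimed. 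The only step with any real content is the polynomial decay estimate in property (iv); it is precisely the weaker monomial decay $(1+\gamma)^{-(t-1)}$ versus the Chebyshev decay $2^{-(d\sqrt{\gamma}-1)}$ that forces $t = \OM(\log(c/\epsilon)/\epsilon)$ here, in contrast with $d = \OM(\log(c/\epsilon)/\sqrt{\epsilon})$ for block Lanczos—the familiar quadratic Chebyshev acceleration gap. Everything else slots into the Lanczos pipeline unchanged, so I do not anticipate any genuine obstacle beyond carefully checking that concavity bound.
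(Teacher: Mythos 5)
Your proposal is correct and follows essentially the same route as the paper's proof: invoke Lemma~\ref{lem:lanczos_independent} with the monomial $f$, bound the remaining term via Lemma~\ref{lem:angle2norm2}, and set $\alpha=\sigma_{p+1}^2$, $\gamma=\epsilon$ with the stated $t$. Your explicit concavity check that $(1+\gamma)\geq 2^{\gamma}$ on $(0,1]$ merely fills in property (iv) of $f$, which the paper asserts without proof, so there is no substantive difference.
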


\begin{proof}
We will set $\alpha$ to be either $\sigma_{p+1}^2$,
so the inequality $\sigma_{k}^2 \geq \alpha$ always holds.
In addition, the function $f$ satisfies the requirements in Section~\ref{sec:proj_contraction}.
So we can apply Lemma~\ref{lem:lanczos_independent} to show that for any positive integer $k$ ($< m,n$) and any orthogonal projector $\T$,
the following holds:
\[
\big\| \T \M  \big\|_2^2 \; \leq \;
\big\| \T\: f \big(\M_{{k}} \M_{{k}}^T\big)  \big\|_2
+ \max \big\{ \sigma_{k+1}^2 , \: (1+\gamma) \alpha \big\}.
\]
It is obvious that $\range (f (\M \M^T) \X) = \range ( (\M \M^T)^t \X)$.
Letting $\T = \I_m - \C \C^\dag = \I_m - (f(\M \M^T) \X \Z) (f(\M \M^T) \X \Z)^\dag$ and applying Lemma~\ref{lem:angle2norm2},
we obtain
\begin{align*}
& \big\| (\I_m - \C \C^\dag) \M  \big\|_2^2
\;\leq \;
\tan \theta  (\U_k, \X \Z) \, \big\|  f \big(\M_{{-p}} \M_{{-p}}^T\big)  \big\|_2
+ \max \big\{ \sigma_{k+1}^2 , \: (1+\gamma) \alpha \big\} \\
& = \;
\tan \theta_k (\U_k, \X \Z) \, \max \big\{ f (\sigma_{p+1}^2) , \cdots, f (\sigma_{n}^2)  \big\}
+ \max \big\{ \sigma_{k+1}^2 , \: (1+\gamma) \alpha \big\} \\
& \leq \; \epsilon \sigma_{p+1}^2 + \max \big\{ \sigma_{k+1}^2 , \: (1+\epsilon) \sigma_{p+1}^2 \big\}
\; \leq \; \sigma_{k+1}^2 + 2\epsilon \sigma_{p+1}^2 ,
\end{align*}
where the second inequality follow by setting $\alpha = \sigma_{p+1}^2$ and $\gamma = \epsilon$
and applying the fourth property of $f$.
\end{proof}

\section{Gap-Dependent Bounds} \label{sec:dependent}

Section~\ref{sec:main:dependent} provides gap-dependent principal angle bounds.
Section~\ref{sec:main:angle2norm} shows how to convert the principal angle bounds to the matrix norm bounds.
In this section we denote the SVD of $\M \in \RB^{m\times n}$ by \eqref{eq:def_svd_M}.

\subsection{Gap-Dependent, Principal Angle Bound} \label{sec:main:dependent}

This subsection studies the power iteration and the block Lanczos method and
establishes gap-dependent, principal angle bounds.
%Theorem~\ref{thm:dependent_power_lanczos_random} analyzes the random-start case,
%and Theorem~\ref{thm:dependent_power_lanczos_warm} analyzes the warm-start case.

\begin{theorem} \label{thm:dependent_power_lanczos_random}
Let $\M$ be any $m\times n$ matrix, $\X$ be an $n\times p$ standard Gaussian matrix,
and $\K = [\X, (\M^T \M) \X, \cdots, (\M^T \M)^d \X]$.
When
\begin{eqnarray*}
t & = & \frac{\log \big(\frac{ \sqrt{n-p} +  \sqrt{p} + \alpha}{ \epsilon (\sqrt{p} -  \sqrt{k} -\alpha )} \big)}{2\log (\sigma_k / \sigma_{p+1})}
\; = \; \OM \bigg( \frac{\log (n/\epsilon)}{\log (\sigma_k / \sigma_{p+1})} \bigg) ,\\
d & = & \frac{1 + \log_2 \big(\frac{ \sqrt{n-p} +  \sqrt{p}+\alpha }{ \epsilon (\sqrt{p} -  \sqrt{k}   -\alpha)} \big)}{\sqrt{ (\sigma_k / \sigma_{p+1})^2 - 1}}
\; = \; \OM \bigg( \frac{ \log (n/\epsilon)}{\sqrt{  \sigma_k / \sigma_{p+1}  - 1}} \bigg),
\end{eqnarray*}
the power iteration and the block Lanczos respectively satisfy
\begin{eqnarray*}
\tan \theta_k \big(\V_{k} , \, (\M^T \M)^t \X\big)
\; \leq \;  \epsilon
\quad \textrm{ and } \quad
\tan \theta_k \big(\V_{k} , \, \K \big)
\; \leq \; \epsilon 
\end{eqnarray*}
with probability at least $1 - 2 \exp (-\alpha^2 / 2)$.
\end{theorem}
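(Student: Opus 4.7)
The plan is to reduce the theorem to the Gaussian-initialization estimate of Theorem~\ref{thm:initialization}, combined with a polynomial filter applied to $\M^T\M$: the monomial $g(x)=x^t$ for the power iteration, and the Chebyshev-based $\phi(\cdot)$ from Section~\ref{sec:chebyshev} for block Lanczos. For any orthonormal $\Z\in\RB^{p\times k}$, $\range(g(\M^T\M)\X\Z)\subset\range((\M^T\M)^t\X)$ trivially, while $\range(\phi(\M^T\M)\X\Z)\subset\range(\K)$ by Lemma~\ref{lem:musco15}(v). Thus in both cases the principal angle $\tan\theta_k$ appearing in the theorem is upper-bounded by the (full-rank) angle $\tan\theta\bigl(\V_k,g(\M^T\M)\X\Z\bigr)$, which I will control in closed form.

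First, I would apply Theorem~\ref{thm:initialization} to the right singular basis $\V=[\V_k,\V_{-k}]$ of $\M$ to produce an orthonormal $\Z\in\RB^{p\times k}$ satisfying $\v_i^T\X\Z=\0$ for $i=k+1,\dots,p$ and
\[
\tan\theta(\V_k,\X\Z)\;\leq\;\tau\;:=\;\frac{\sqrt{n-p}+\sqrt{p}+\alpha}{\sqrt{p}-\sqrt{k}-\alpha}
\]
with probability at least $1-2\exp(-\alpha^2/2)$. This is the only probabilistic step; everything that follows is deterministic.

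Next, since $g(\M^T\M)=\V g(\Si^2)\V^T$ and since the null-space property of $\Z$ forces $\V_{-k}^T g(\M^T\M)\X\Z$ to agree with $g(\Si_{-p}^2)\V_{-p}^T\X\Z$ on its only nonzero rows,
\[
\tan\theta\!\bigl(\V_k,g(\M^T\M)\X\Z\bigr)
\;\leq\;\frac{\|g(\Si_{-p}^2)\V_{-p}^T\X\Z\|_2}{\sigma_k\!\bigl(g(\Si_k^2)\V_k^T\X\Z\bigr)}
\;\leq\;\frac{\max_{i>p}|g(\sigma_i^2)|}{\min_{i\leq k}g(\sigma_i^2)}\,\tau,
\]
provided $g(\sigma_i^2)>0$ for $i\leq k$. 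For the power iteration, $g(x)=x^t$ makes this ratio $(\sigma_{p+1}/\sigma_k)^{2t}$, so the requirement $(\sigma_{p+1}/\sigma_k)^{2t}\tau\leq\epsilon$ yields precisely the stated $t$. For block Lanczos I would set the scale parameter of $\phi$ to $\sigma_{p+1}^2$ and the gap parameter to $\gamma=(\sigma_k/\sigma_{p+1})^2-1$, so $(1+\gamma)\alpha=\sigma_k^2$: Lemma~\ref{lem:musco15}(iii) then gives $\phi(\sigma_i^2)\geq\sigma_k^2$ for $i\leq k$, while (iv) gives $|\phi(\sigma_j^2)|\leq\sigma_{p+1}^2/2^{d\sqrt{\gamma}-1}$ for $j>p$. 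The resulting ratio is at most $2^{1-d\sqrt{\gamma}}(\sigma_{p+1}/\sigma_k)^2\leq 2^{1-d\sqrt{\gamma}}$, and demanding $2^{1-d\sqrt{\gamma}}\tau\leq\epsilon$ reproduces the stated $d$.

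The main obstacle I anticipate is the middle inequality in the display: one needs $\sigma_k\!\bigl(g(\Si_k^2)\V_k^T\X\Z\bigr)\geq\bigl(\min_{i\leq k}g(\sigma_i^2)\bigr)\sigma_k(\V_k^T\X\Z)$, which relies on $g(\sigma_i^2)\geq 0$ for $i\leq k$ so that left-multiplication by the diagonal $g(\Si_k^2)$ shrinks the smallest singular value by no more than its smallest entry. This holds in both cases but deserves explicit verification, particularly in the Chebyshev case where $\phi$ can change sign. No additional randomness enters, so the $1-2\exp(-\alpha^2/2)$ confidence from Theorem~\ref{thm:initialization} carries through, and the two cases differ only in whether the iteration count scales as $\log(\sigma_k/\sigma_{p+1})^{-1}$ or as $\bigl((\sigma_k/\sigma_{p+1})^2-1\bigr)^{-1/2}$.
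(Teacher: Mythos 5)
Your route is essentially the paper's: bound the initial angle by Theorem~\ref{thm:initialization}, push it through a polynomial filter of $\M^T\M$ (the monomial $x^t$ for the power method, the Chebyshev-based $\phi$ with $\alpha=\sigma_{p+1}^2$, $\gamma=(\sigma_k/\sigma_{p+1})^2-1$ for Lanczos), and finish with the range inclusion of Lemma~\ref{lem:musco15}(v); your parameter bookkeeping reproduces what the paper gets by citing Theorems~\ref{thm:converge_power} and~\ref{thm:converge_lanczos}, whose engine is Lemma~\ref{lem:dependent_angle}. However, the step you inline in place of those theorems has a real flaw: the second inequality of your displayed chain. You bound the filtered angle by the decoupled quantity $\|g(\Si_{-p}^2)\V_{-p}^T\X\Z\|_2\,/\,\sigma_{\min}\bigl(g(\Si_k^2)\V_k^T\X\Z\bigr)$ and then assert this is at most $\bigl(\max_{i>p}|g(\sigma_i^2)|/\min_{i\le k}g(\sigma_i^2)\bigr)\tau$. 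After pulling out the filter factors this requires $\|\V_{-p}^T\X\Z\|_2/\sigma_{\min}(\V_k^T\X\Z)\le\tau$, but Theorem~\ref{thm:initialization} only gives $\tan\theta(\V_k,\X\Z)\le\tau$, and
\[
\tan\theta(\V_k,\X\Z)\;=\;\max_{\|\w\|_2=1}\frac{\|\V_{-k}^T\X\Z\w\|_2}{\|\V_k^T\X\Z\w\|_2}\;\le\;\frac{\|\V_{-p}^T\X\Z\|_2}{\sigma_{\min}(\V_k^T\X\Z)},
\]
i.e.\ your decoupled ratio sits on the wrong side of the tangent: its numerator and denominator are extremized at different vectors $\w$, so it can strictly exceed $\tan\theta(\V_k,\X\Z)$, and nothing you cite gives a separate lower bound on $\sigma_{\min}(\V_k^T\X\Z)$.

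The fix is simply not to decouple, which is exactly the paper's Lemma~\ref{lem:dependent_angle}: for each unit $\w$, use $\|\V_{-k}^T g(\A)\X\Z\w\|_2=\|g(\Si_{-p}^2)\V_{-p}^T\X\Z\w\|_2\le\max_{i>p}|g(\sigma_i^2)|\,\|\V_{-k}^T\X\Z\w\|_2$ and $\|g(\Si_k^2)\V_k^T\X\Z\w\|_2\ge\min_{i\le k}|g(\sigma_i^2)|\,\|\V_k^T\X\Z\w\|_2$, then take the maximum over $\w$; the same $\w$ appears in numerator and denominator, yielding $\bigl(\max_{i>p}|g|/\min_{i\le k}|g|\bigr)\tan\theta(\V_k,\X\Z)$, which Theorem~\ref{thm:initialization} does control. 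Incidentally, the ``main obstacle'' you flag is not one: with your parameters $\sigma_i^2\ge\sigma_k^2=(1+\gamma)\alpha$ for $i\le k$, so Lemma~\ref{lem:musco15}(ii)--(iii) gives $\phi(\sigma_i^2)>\sigma_i^2>0$ on the top block, and for a nonsingular diagonal $\D$ and square $\B$ one always has $\sigma_{\min}(\D\B)\ge\min_i|\D_{ii}|\,\sigma_{\min}(\B)$; the issue is not the positivity of the filter but the replacement of the coupled tangent by a ratio of separately extremized norms. With the per-vector argument substituted, your parameter choices, the trivial inclusion $\range(g(\M^T\M)\X\Z)\subset\range((\M^T\M)^t\X)$, the inclusion $\range(\phi(\M^T\M)\X\Z)\subset\range(\K)$, and the single probabilistic step carry through exactly as in the paper.
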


\begin{proof}
The initial error can be bounded by Theorem~\ref{thm:initialization}.
Since $\A = \M^T \M \in \RB^{n\times n}$ is SPSD,
we can apply Theorem~\ref{thm:converge_power} (the power method)
and Theorem~\ref{thm:converge_lanczos} (the block Lanczos method)
to analyze the convergence.
The theorem follows from Lemma~\ref{lem:musco15} that $\range (\phi (\M^T \M)\X) \subset \range (\K)$.
\end{proof}

\subsection{From Principal Angle Bound to Matrix Norm Bound} \label{sec:main:angle2norm}

We establish Lemma~\ref{thm:angle2norm} to help us convert principal angle bound to spectral/Frobenius norm bounds.
Then we obtain gap-dependent matrix norm bound in \ref{thm:dependent_norm_random_construct}.

\begin{lemma} \label{thm:angle2norm}
Let $\M \in \RB^{m\times n}$ be any matrix, $\W \in \RB^{n\times p}$ ($p\geq k$) have full column rank and satisfy $\rk (\V_{k}^T \W) = k$,
and $\xi = 2$ or $F$.
Then there exists an $m\times  k$ matrix $\C$ satisfying $\range (\C) \subset \range (\M \W)$ and
\[
\big\| \M - \C \C^\dag \M \big\|_\xi^2
\; \leq \; \Big( 1 + \tan^2 \theta_k (\V_{k} , \, \W) \Big) \: \| \M - \M_k \|_\xi^2.
\]
Let $\Z\in \RB^{p\times k}$ have orthonormal columns and satisfy $\range (\W\Z) \subset$ $ \big( [\v_{k+1}^T , $ $\cdots, \v_{p}^T]^T \big)$.
Let $\C = \M \W \Z \in \RB^{m\times k}$. Then
\[
\big\| \M - \C \C^\dag \M \big\|_\xi^2
\; \leq \;  \| \M - \M_k \|_\xi^2  + \tan^2 \theta (\V_{k} , \, \W \Z) \; \| \M - \M_p\|_\xi^2.
\]
\end{lemma}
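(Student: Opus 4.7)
Both parts reduce to a single calculation once we produce a rank-$k$ witness inside $\range(\M\W)$. For Part 1, let $\Pii \in \PM_k$ achieve the minimum in $\tan \theta_k(\V_k, \W)$ and let $\tilde\Q \in \RB^{n\times k}$ be an orthonormal basis of $\range(\W\Pii)$; for Part 2, let $\tilde\Q$ be an orthonormal basis of $\range(\W\Z)$. Set $\C = \M\tilde\Q$; then $\range(\C) \subset \range(\M\W)$, and in Part 2 we have $\range(\C) = \range(\M\W\Z)$, so the statement about $\C\C^\dag$ is unchanged when we replace $\C$ by the one named in the lemma. Combining \eqref{eq:optimal_ccdag} with the elementary fact that $\|\M - \PM_{\C,k}^\xi(\M)\|_\xi \leq \|\M - \C\Y\|_\xi$ for every $\Y \in \RB^{k\times n}$ gives
\begin{align*}
\|\M - \C\C^\dag \M\|_\xi^2 \;\leq\; \|\M - \C\Y\|_\xi^2 .
\end{align*}
The decisive choice is $\Y = (\V_k^T \tilde\Q)^{-1} \V_k^T$, which is well defined because the hypotheses force the $k \times k$ block $\V_k^T \tilde\Q$ to be invertible.

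\textbf{Main reduction.}
Decompose $\tilde\Q = \V_k\A + \V_{-k}\B$ with $\A = \V_k^T \tilde\Q$ and $\B = \V_{-k}^T \tilde\Q$, so that $\M \tilde\Q \A^{-1} = \U_k \Si_k + \U_{-k}\Si_{-k}\B \A^{-1}$. A short expansion using $\V = [\V_k, \V_{-k}]$ yields
\begin{align*}
\M - \C\Y \;=\; \U_{-k}\Si_{-k}\bigl(\V_{-k}^T - \B\A^{-1}\V_k^T\bigr) \;=\; \U_{-k}\Si_{-k}\bigl[-\B\A^{-1},\;\I_{n-k}\bigr]\V^T .
\end{align*}
Since $\U_{-k}$ has orthonormal columns and $\V$ is orthogonal, both $\xi$-norms reduce to $\|\Si_{-k}[-\B\A^{-1},\;\I_{n-k}]\|_\xi$, and the principal-angle data enters only through the identity $\|\B\A^{-1}\|_2 = \tan \theta(\V_k, \tilde\Q)$.

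\textbf{Finishing Part 1.}
Here $\|\B\A^{-1}\|_2 = \tan \theta_k(\V_k, \W)$. For $\xi = 2$ I would expand $\Si_{-k}[-\B\A^{-1},\I_{n-k}][\cdot]^T = \Si_{-k}^2 + \Si_{-k}\B\A^{-1}\A^{-T}\B^T\Si_{-k}$, bound each summand's top eigenvalue by $\sigma_{k+1}^2$ and $\sigma_{k+1}^2 \tan^2 \theta_k$ respectively, and add. For $\xi = F$, block orthogonality gives $\|\Si_{-k}\B\A^{-1}\|_F^2 + \|\Si_{-k}\|_F^2$, and the first term is controlled by $\|\B\A^{-1}\|_2^2\,\|\Si_{-k}\|_F^2$. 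Both cases collapse to $(1+\tan^2 \theta_k)\|\M-\M_k\|_\xi^2$.

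\textbf{Finishing Part 2, and the main subtlety.}
The hypothesis $\v_i^T\W\Z = \0$ for $i = k+1,\ldots,p$ forces the top $p-k$ rows of $\B = \V_{-k}^T\tilde\Q$ to vanish, and therefore also the top $p-k$ rows of $\Si_{-k}\B\A^{-1}$; only the bottom block $\Si_{-p}\B''\A^{-1}$ survives, where $\B'' = \V_{-p}^T\tilde\Q$ satisfies $\|\B''\A^{-1}\|_2 = \tan \theta(\V_k, \W\Z)$. This turns $\Si_{-k}^2 + \Si_{-k}\B\A^{-1}\A^{-T}\B^T\Si_{-k}$ into a block-diagonal matrix, so the spectral estimate splits as $\max\bigl\{\sigma_{k+1}^2,\,\sigma_{p+1}^2(1+\tan^2 \theta)\bigr\} \leq \sigma_{k+1}^2 + \sigma_{p+1}^2 \tan^2 \theta$, using $\sigma_{p+1} \leq \sigma_{k+1}$. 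The Frobenius case is analogous: block orthogonality gives $\|\Si_{-p}\B''\A^{-1}\|_F^2 + \|\Si_{-k}\|_F^2$, and the entry-wise bound $[\B''\A^{-1}\A^{-T}\B''^T]_{ii} \leq \tan^2 \theta$ yields $\|\Si_{-p}\B''\A^{-1}\|_F^2 \leq \tan^2 \theta\,\|\M-\M_p\|_F^2$. The one place that needs care is the witness itself: the naive choice $\Y = \V_k^T$ loses the $\A^{-1}$ factor and destroys exactly the block structure that upgrades $\sigma_{k+1}^2$ to $\sigma_{p+1}^2$ on the error scale, so finding the corrected witness $\Y = (\V_k^T\tilde\Q)^{-1}\V_k^T$ is really the only nontrivial step.
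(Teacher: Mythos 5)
Your proposal is correct and follows essentially the same route as the paper: the paper's proof (via Lemma~\ref{lem:boutsidis}, Lemma~\ref{lem:angle2norm1}, Lemma~\ref{lem:angle2norm2} and the two corollaries) rests on exactly your witness $\Y=(\V_k^T\cdot)^{\dag}\V_k^T$ together with the optimality of $\C\C^\dag$ and the vanishing of the $\v_{k+1},\dots,\v_p$ block, which is what upgrades the error scale from $\sigma_{k+1}$ to $\sigma_{p+1}$. The only organizational difference is that you compute the residual $\M-\C\Y$ in one SVD-block expansion, whereas the paper first splits $\M$ by the matrix Pythagorean identity and then bounds the $\M_k$-part through a Boutsidis-style lemma; the resulting bounds coincide.
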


\begin{proof}
The result follows directly from Corollary~\ref{cor:angle2norm}
and Corollary~\ref{cor:angle2norm2}.
\end{proof}

%In fact,
%{\bf matrix norm bounds are weaker than the principal angle bounds}.
%If we have $\tan \theta_k (\V_{k} , \, \W) \leq \sqrt{\epsilon}$,
%we can immediately obtain ($1+\epsilon$) spectral/Frobenius norm relative-error bound using Lemma~\ref{thm:angle2norm}.
%Unfortunately, the converse is not true.
%Let us consider the counter example where the $k$-th and $(k+1)$-th singular values are the same
%and $ \range (\W) \subset \nul (\v_{k}) $.
%The $(1+\epsilon)$ spectral/Frobenius norm relative-error bound can be attained,
%but the principal angle is always big: $\tan \theta_k (\V_{k} , \, \W) = \infty$.
%Based on the same argument, the per vector error bound of \cite{musco15stronger}
%does not imply small principal angle either.
%

By applying Lemma~\ref{thm:angle2norm}, the principal angle bound in
Theorem~\ref{thm:dependent_power_lanczos_random}
can be converted to the following matrix norm bounds.
The theorem is stronger than its counterpart in \cite{musco15stronger}.
It is because $\sigma_{p+1}^2$ cannot exceed $\sigma_{k+1}^2$ when $p \geq k$.

\begin{theorem}  \label{thm:dependent_norm_random_construct}
Let $\M$, $\X$, $\K$ be defined in Theorem~\ref{thm:dependent_power_lanczos_random},  and let
\begin{eqnarray*}
t & = & \frac{\log \Big(\frac{\sqrt{ n-p } ( \sqrt{n-p} + \sqrt{p} + \alpha ) }{ \sqrt\epsilon (\sqrt{p} -  \sqrt{k} - \alpha) } \Big)}{2\log (\sigma_k / \sigma_{p+1})}
\; = \; \OM \bigg( \frac{\log (n/\epsilon)}{\log (\sigma_k / \sigma_{p+1})} \bigg) ,\\
d & = & \frac{1 + \log_2 \Big(\frac{\sqrt{ n-p } ( \sqrt{n-p} + \sqrt{p} + \alpha ) }{ \sqrt\epsilon (\sqrt{p} -  \sqrt{k} - \alpha) } \Big)}{\sqrt{ (\sigma_k / \sigma_{p+1})^2 - 1}}
\; = \; \OM \bigg( \frac{\log (n/\epsilon)}{\sqrt{  \sigma_k / \sigma_{p+1}  - 1}} \bigg).
\end{eqnarray*}
Let $\Q$ be the orthonormal bases of either $\M (\M^T \M)^t \X \in \RB^{m\times p}$ or $\M\K \in \RB^{m\times dp}$.
Then it holds with probability at least $1- 2 \exp (-\alpha^2 / 2)$ that
\[
\big\| \M - \Q (\Q^T \M)_k  \big\|_\xi^2
\; \leq \; \big\| \M - \M_k \|_\xi^2 + \epsilon  \sigma_{p+1}^2.
\]
\end{theorem}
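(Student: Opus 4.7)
The plan is to chain the random-initialization bound (Theorem~\ref{thm:initialization}) with the gap-dependent convergence analysis behind Theorem~\ref{thm:dependent_power_lanczos_random} to produce a sharp principal-angle estimate on a specifically chosen $k$-dimensional subspace, then convert that estimate to a matrix-norm bound via the second part of Lemma~\ref{thm:angle2norm}, and finally relax the result to the object $\Q(\Q^T\M)_k$ that appears in the statement.

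The only nontrivial calibration is to aim for the stronger accuracy $\epsilon' := \sqrt{\epsilon/(n-p)}$ in place of $\epsilon$. A direct substitution shows that the iteration counts prescribed by Theorem~\ref{thm:dependent_power_lanczos_random} with tolerance $\epsilon'$ are exactly the $t$ and $d$ stated in the present theorem: the factor $\sqrt{n-p}/\sqrt\epsilon$ inside each logarithm is precisely $1/\epsilon'$. To use the second half of Lemma~\ref{thm:angle2norm} I also need the $p\times k$ orthonormal matrix $\Z$ annihilating $\v_{k+1},\ldots,\v_p$; this is supplied by the first assertion of Theorem~\ref{thm:initialization}, and it is preserved by any spectral function of $\M^T\M$ because each $\v_i$ is an eigenvector. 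Applied to $\X\Z$ (rather than to the whole $\X$), the initialization bound gives $\tan\theta(\V_k,\X\Z)\leq\frac{\sqrt{n-p}+\sqrt{p}+\alpha}{\sqrt{p}-\sqrt{k}-\alpha}$ with probability at least $1-2e^{-\alpha^2/2}$, and the convergence factor $(\sigma_{p+1}/\sigma_k)^{2t}$ for the monomial (or its Chebyshev analogue in the Lanczos case) brings this quantity down to $\epsilon'$.

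Set $\W := (\M^T\M)^t\X$ for the power method and $\W := \phi(\M^T\M)\X$ for Lanczos, where $\phi$ is the Chebyshev polynomial of Section~\ref{sec:chebyshev} so that $\range(\W)\subset\range(\K)$ by Lemma~\ref{lem:musco15}(v). The second form of Lemma~\ref{thm:angle2norm}, applied with this $\W$ and $\Z$, yields an $m\times k$ matrix $\C = \M\W\Z$ satisfying $\range(\C)\subset\range(\M\W)\subset\range(\Q)$ and
\[
\|\M-\C\C^\dagger\M\|_\xi^2 \;\leq\; \|\M-\M_k\|_\xi^2 + \tan^2\theta(\V_k,\W\Z)\,\|\M-\M_p\|_\xi^2.
\]
Bounding the tail crudely by $\|\M-\M_p\|_2^2=\sigma_{p+1}^2$ and $\|\M-\M_p\|_F^2\leq(n-p)\sigma_{p+1}^2$, the perturbation term becomes $(\epsilon')^2(n-p)\sigma_{p+1}^2=\epsilon\sigma_{p+1}^2$ in either norm, giving $\|\M-\C\C^\dagger\M\|_\xi^2 \leq \|\M-\M_k\|_\xi^2+\epsilon\sigma_{p+1}^2$.

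The final step is to replace $\C\C^\dagger\M$ by $\Q(\Q^T\M)_k$. By~(\ref{eq:projection_Fnorm}), $\Q(\Q^T\M)_k = \PM_{\M\W,k}^F(\M)$ is the Frobenius-optimal rank-$\leq k$ approximation with column space in $\range(\Q)$; since $\C\C^\dagger\M$ is a competitor, the $\xi=F$ bound follows immediately. The $\xi=2$ case is where I expect the main obstacle, because $\Q(\Q^T\M)_k$ is only $F$-optimal in $\range(\Q)$. My plan is to decompose $\M-\Q(\Q^T\M)_k=(\I-\Q\Q^T)\M+\Q(\Q^T\M-(\Q^T\M)_k)$ into two summands with orthogonal column ranges, bound $\|(\I-\Q\Q^T)\M\|_2^2\leq\|(\I-\C\C^\dagger)\M\|_2^2$ via the projector inequality $\C\C^\dagger\preceq\Q\Q^T$, control $\sigma_{k+1}(\Q^T\M)^2$ by exploiting the fact that the angle bound forces $\range(\Q)$ to almost contain $\V_k$, and combine the two pieces through Weyl's inequality. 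Ensuring that the resulting spectral slack is absorbed into the $\epsilon\sigma_{p+1}^2$ budget, rather than inflating the leading $\sigma_{k+1}^2$ term, is the one genuinely delicate calculation in the argument; the other steps are bookkeeping that propagates the initialization and convergence bounds already proved in this paper.
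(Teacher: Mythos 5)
Your Frobenius-norm argument is correct and is essentially the paper's own route: calibrate the angle tolerance to $\sqrt{\epsilon/(n-p)}$ (which reproduces the stated $t$ and $d$), combine Theorem~\ref{thm:initialization} with Theorems~\ref{thm:converge_power} and~\ref{thm:converge_lanczos} applied to $\X\Z$, pass through the second half of Lemma~\ref{thm:angle2norm} with $\W=(\M^T\M)^t\X$ or $\W=\phi(\M^T\M)\X$ and $\range(\phi(\M^T\M)\X)\subset\range(\K)$, bound $\|\M-\M_p\|_F^2\le(n-p)\sigma_{p+1}^2$, and finish via \eqref{eq:projection_Fnorm} and the $F$-optimality of $\PM^F_{\Q,k}(\M)=\Q(\Q^T\M)_k$ over competitors such as $\C\C^\dag\M$.

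The spectral-norm case, however, is a genuine gap, and it is exactly the step you leave open. Your split $\M-\Q(\Q^T\M)_k=(\I_m-\Q\Q^T)\M+\Q\big(\Q^T\M-(\Q^T\M)_k\big)$, with $\|(\I_m-\Q\Q^T)\M\|_2\le\|(\I_m-\C\C^\dag)\M\|_2$ and $\sigma_{k+1}(\Q^T\M)\le\sigma_{k+1}$, can only deliver
\[
\big\|\M-\Q(\Q^T\M)_k\big\|_2^2\;\le\;\big(\sigma_{k+1}^2+\epsilon\sigma_{p+1}^2\big)+\sigma_{k+1}^2,
\]
which doubles the leading term; and the tools of this paper cannot instead drive $\|(\I_m-\Q\Q^T)\M\|_2^2$ down toward $\sigma_{p+1}^2$, because the initialization/convergence bounds only control the top $k$ directions (capturing all top $p$ directions with a block of size $p$ would need $\sqrt{p}-\sqrt{p}-\alpha>0$ in Theorem~\ref{thm:initialization}). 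The two summands are in tension---when $\range(\Q)$ nearly contains the $(k{+}1)$st left singular direction the second term is close to $\sigma_{k+1}^2$ but the first is small, and vice versa---so any argument that bounds them independently, as your Weyl/Pythagorean plan does, cannot recover $\sigma_{k+1}^2+\epsilon\sigma_{p+1}^2$; you would need an argument that couples them. The paper sidesteps this entirely: it proves only the Frobenius bound with excess $\epsilon\sigma_{p+1}^2$ and then invokes \cite[Theorem~3.4]{gu2015subspace}, a conversion result of the form $\|\M-\Q(\Q^T\M)_k\|_2^2\le\sigma_{k+1}^2+\big(\|\M-\Q(\Q^T\M)_k\|_F^2-\|\M-\M_k\|_F^2\big)$, into which the Frobenius bound is plugged to give $\sigma_{k+1}^2+\epsilon\sigma_{p+1}^2$. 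To complete your proof you must either cite such a conversion theorem or reprove it; that is a substantive argument not contained in your sketch.
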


\begin{proof}
Let $\W = (\M^T \M)^t\X \in \RB^{n\times p}$ in the power iteration case,
and let $\W = \phi (\M^T \M) \X  \in \RB^{n\times p}$ in the block Lanczos case where
$\phi $ is the function defined in Lemma~\ref{lem:musco15}.
Applying Lemma~\ref{thm:angle2norm},
Theorem~\ref{thm:initialization} (random initialization),
Theorem~\ref{thm:converge_power} (convergence of power iteration),
Theorem~\ref{thm:converge_lanczos} (convergence of block Lanczos),
and that $\range (\phi(\M^T \M) \X) \subset \range (\K)$ by Lemma~\ref{lem:musco15},
we can show that
\begin{eqnarray*}
\big\| \M - \PM_{\Q,k}^F (\M) \big\|_F^2
& \leq & \big\| \M - \M_k \big\|_F^2 + \frac{\epsilon }{n-p}  \big\| \M - \M_p\big\|_F^2\\
& = & \big\| \M - \M_k \big\|_F^2 + \frac{\epsilon }{n-p}  \sum_{i=p+1}^n \sigma_i^2
\; \leq \; \big\| \M - \M_k \big\|_F^2 + \epsilon  \sigma_{p+1}^2 .
\end{eqnarray*}
Then the Frobenius norm bound follows by the definition that $\PM_{\Q,k}^F (\M) =\Q (\Q^T \M)_k$.
Then the spectral norm bound follows from the Frobenius norm bound and \cite[Theorem~3.4]{gu2015subspace}.
\end{proof}

\subsection{Comparisons to the Previous Work} \label{sec:comparison_dependent}

The traditional block Lanczos methods usually set the block size to be small integer
which is usually smaller than the target rank $k$.
In this work and \cite{musco15stronger}, the block size is set greater than the target rank $k$.
We discuss in Section~\ref{sec:introduction} that our setting of big block size is due to different motivations from the traditional work.
In this subsection we further discuss the pros and cons of big block size
by comparing with the convergence bound of Li \& Zhang (2013) \cite{li2013convergence}.

%
%
%
%{\bf Saad's Bound.}
%The seminal work of Saad \cite{saad1980rates} (see also the book \cite{saad2011numerical}) offers the most well known bounds of the Lanczos method.
%Let $\x$ be a vector and $\K = [\x, \A \x, \A^2 \x, \cdots , \A^{d_1} \x]$ be the Krylov matrix.
%Saad showed that the top eigenvectors of $\A$ are nearly contained in the Krylov subspace $\range (\K)$.
%The result of Saad is complicated, so we set $\lambda_1 = \cdots = \lambda_{k-1}$ and $\lambda_n=0$ to simplify his result.
%When
%\[
%d_1 \; = \;
%\OM \bigg( \frac{k \log [\lambda_{k-1} / (\lambda_{k-1}- \lambda_k)] + \log (1/\epsilon)}{\sqrt{(\lambda_{k} - \lambda_{k+1}) / \lambda_{k+1}}} \bigg),
%%\; \approx \;
%%\OM \bigg( \frac{k \frac{\lambda_k}{\lambda_{k-1} - \lambda_k}
%%+ \log (1/\epsilon)}{\sqrt{(\lambda_{k} - \lambda_{k+1}) / \lambda_{k+1}}} \bigg),
%\]
%it holds that
%\begin{eqnarray}
%\tan \theta_1 (\u_i , \K)
%\; \leq \; \epsilon \tan \theta (\u_i, \x)
%\quad \textrm{  for } \;  i = 1 , \cdots, k  .
%\end{eqnarray}

{\bf Li \& Zhang's Bound.}
Li \& Zhang \cite{li2013convergence} established improved convergence bound of the block Lanczos method.
Li \& Zhang claimed that their result is stronger than Saad's bound \cite{saad1980rates}.
Let $b$ be the block size\footnote{For our method, we let $p$ be the block size.
We require $p$ be greater than or equal to the target rank $k$. In the classical work, e.g.\ \cite{saad1980rates,li2013convergence},
$b$ can be less than $k$.}
and $\K = [\X, \A \X, \cdots, \A^{d_b} \X] \in \RB^{n\times b(d_b+1)}$ be the Krylov matrix.
The result of Li \& Zhang is complicated, so we set
$\lambda_1 = \cdots = \lambda_{k-1}$\footnote{This is the worst case for Li \& Zhang's bound.} and $\lambda_n=0$ to simplify their result.
When
\begin{eqnarray} \label{eq:compare_db}
d_b \; = \;
\OM \bigg( \frac{k \log [\lambda_{k-1} / (\lambda_{k-1}- \lambda_k)] + \log (1/\epsilon)}{\sqrt{(\lambda_{k} - \lambda_{k+b}) / \lambda_{k+b}}} \bigg),
\end{eqnarray}
it holds that
\begin{eqnarray} %\label{eq:compare_principal_angle}
\tan \theta_1 (\u_i , \K)
\; \leq \; \epsilon \tan \theta (\u_i, \x)
\quad \textrm{  for } \;  i = 1 , \cdots, k  . \nonumber
\end{eqnarray}

{\bf This Work.} To attain the guarantee
\[
\tan \theta_k (\U_k, \K)
\; \leq \; \epsilon \tan \theta_k (\U_k , \X),
\]
we show that the block Lanczos needs
\begin{eqnarray} \label{eq:compare_dp}
d_p \; = \;
\OM \bigg( \frac{  \log (1/\epsilon)}{\sqrt{(\lambda_{k} - \lambda_{p+1}) / \lambda_{p+1}}} \bigg)
\end{eqnarray}
iterations, where $p$ can be set to be any integer greater than or equal to $k$.
In fact, a similar (but not the same) result can be derived from Li \& Zhang's work \cite[Theorem 4.1]{li2013convergence}
by setting $i=2$, $l = k$, and $n_b=p$.
However, Li \& Zhang did not provide random initialization analysis,
which is no easier than the convergence analysis.

\begin{table}[t]\setlength{\tabcolsep}{0.3pt}
\caption{Comparisons of the block Lanczos method with different block sizes.
Here $d_b$ and $d_p$ are defined in Section~\ref{sec:comparison_dependent}.
It holds that $d_p \leq d_b$. However, there is no inequality relationship between $bd_b$ and $p d_p$.}
\label{tab:comparison}
\begin{center}
\begin{tabular}{c c c c  }
\hline
~~~~~Block Size~~~~~  & ~~~~~Time~~~~~  &  ~~~~~Space~~~~~ &  ~~~~~\#Passes~~~~~ \\ \hline
%Lanczos [Saad]         & $\OM (n^2 d_1)$   & $n\times d_1$ & $d_1$ \\
$b$ ($\geq 1$)     & $\OM (n^2 b d_b)$   & $n\times b d_b$ & $d_b$ \eqref{eq:compare_db} \\
$p$ ($\geq k$)  &$\OM (n^2 p d_p )$   & $n\times p d_p $ & $d_p$ \eqref{eq:compare_dp}\\
\hline
\end{tabular}
\end{center}
\end{table}

{\bf Comparisons.}
We compare between the methods in Table~\ref{tab:comparison}
and discuss the effects of block size in the following.
\begin{itemize}
\item
{\bf Effects of the spectral gaps and the error tolerance.}
When
\[
 \log \frac{1}{\epsilon}
\; < \; b \log \frac{\lambda_{k-1} }{ \lambda_{k-1}- \lambda_k},
\]
it holds that $p d_p < b d_b$.\footnote{When $b +k = p+1$, the denominators of \eqref{eq:compare_db} and \eqref{eq:compare_dp} are the same.
So we can set $b = p+1 -k$ to facilitate the comparison.}
Thus either when the spectral gaps are small or when a low-precision solution suffices,
big block size has advantage.
\item
{\bf Time and memory costs.}
The per-iteration time costs are $\OM (n^2 b)$ (block size $b$) and $\OM (n^2 p)$ (block size $p$),
and thus the total time cost are respectively $\OM (n^2 b d_b)$ and $\OM (n^2  p d_p)$.
The dimensionality of the Krylov subspaces are
respectively $n \times b d_b$ (block size $b$) and $n \times pd_p$ (block size $p$).
When the spectral gaps are small or $\epsilon$ is not too small,
large block size ($p> k$) saves time and memory.
\item
{\bf Pass Efficiency.}
Pass efficiency is also an important pursuit because every pass incurs many cache-RAM swaps or even RAM-disk swaps.
In practice, the time cost of the cache-RAM swaps is not negligible compared to the CPU time.
When $\A$ does not fit in RAM, the RAM-disk swaps are tremendously more expensive than the CPU time.
The number of passes are respectively $d_b$ (block size $b$) and $d_p$ (block size $p$) passes through the data.
Not surprisingly, bigger block size always improves pass efficiency.
\end{itemize}
We conclude that big block size is a good choice when
(1) the spectral gaps are small,
(2) a low-precision solution suffices,
or (3) a pass through the data is expensive.
However, in computing high precision solution,
the results in this paper and \cite{musco15stronger} may not be useful
because of high computational costs and numerical stability issues.

\subsection{Proof of the Theorems} \label{sec:convergence_dependent}

This subsection analyzes the convergence of principal angles
and establishes linear convergence rates.
Here we only consider SPSD matrix $\A$,
because  both $\M \M^T$ and $\M^T \M$ are  SPSD.
We denote the eigenvalue decomposition of $\A$ by
\[
\A \; = \; \U \Lam \U^T
\; = \; \underbrace{\U_k \Lam_k \U_k^T}_{\A_k} + \underbrace{\U_{-k} \Lam_{-k} \U_{-k}^T }_{\A_{-k}}
\; = \; \underbrace{\sum_{i=1}^k \lambda_i \u_i \u_i^T}_{\A_k} + \underbrace{\sum_{i=k+1}^n \lambda_i \u_i \u_i^T}_{\A_{-k}},
\]
where $\lambda_i$'s are in the descending order.

{\bf Key lemma.}
We first establish the following lemma, which will be used in showing the convergence of the power method and the block Lanczos method.

\begin{lemma} \label{lem:dependent_angle}
Let $\A$ be an $n\times n$ SPSD matrix, $\X$ be an $n\times p$ matrix with full column rank,
and $\Z \in \RB^{p\times k}$ ($k \leq p$)
have orthonormal columns and satisfy $\range (\X \Z ) \subset \nul \big( [\u_{k+1}^T , $ $\cdots, \u_{p}^T]^T \big)$.
Assume that $f( \Lam_{k}) \in \RB^{k\times k}$ is nonsingular.
Then for any function $f(\cdot )$ we have
\begin{align*}
&\tan \theta_k  \Big(\U_{k} , \, f (\A) \X \Big)
\; \leq \; \tan \theta  \Big(\U_{k} , \, f (\A) \X \Z \Big)\\
& \leq \;
\frac{\max \big\{ | f ( \lambda_{p+1})| , \cdots, |f ( \lambda_{n})|
 \big\}}{ \min \big\{ |f ( \lambda_{1})| , \cdots, |f (  \lambda_{k})| \big\} }
        \, \tan \theta \Big(\U_k, \X \Z  \Big) .
\end{align*}
\end{lemma}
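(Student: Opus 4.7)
The plan is to prove both inequalities by working in the eigenbasis of $\A$ and exploiting the null-space structure of $\X\Z$ supplied by Theorem~\ref{thm:initialization}.

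For the first inequality, I would recall that $\tan \theta_k(\U_k, f(\A)\X)$ is defined as a minimum over all rank-$k$ orthogonal projectors $\PP$. Since $\Z$ has orthonormal columns, $\PP=\Z\Z^T$ is a valid competitor, so
\[
\tan \theta_k\bigl(\U_k, f(\A)\X\bigr) \;\leq\; \max_{\substack{\|\w\|_2=1\\ \Z\Z^T \w = \w}} \frac{\|\U_{-k}^T f(\A)\X\w\|_2}{\|\U_k^T f(\A)\X\w\|_2}.
\]
The constraint $\Z\Z^T\w = \w$ lets me write $\w = \Z\y$ with $\|\y\|_2=1$, and the right-hand side then coincides with $\tan\theta(\U_k, f(\A)\X\Z)$ (nonsingularity of $f(\Lam_k)$ plus $\rk(\U_k^T \X\Z)=k$ ensures $f(\A)\X\Z$ has full column rank). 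This yields the left inequality.

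For the right inequality, I would use $f(\A) = \U f(\Lam) \U^T$ to compute
\[
\U_k^T f(\A)\X\Z = f(\Lam_k)\,\U_k^T \X\Z, \qquad \U_{-k}^T f(\A)\X\Z = f(\Lam_{-k})\,\U_{-k}^T \X\Z.
\]
The hypothesis $\range(\X\Z) \subset \nul([\u_{k+1},\dots,\u_p]^T)$ is exactly the condition analyzed in Theorem~\ref{thm:initialization}, so
\[
\U_{-k}^T \X\Z \;=\; \begin{bmatrix}\0_{(p-k)\times k}\\ \U_{-p}^T \X\Z\end{bmatrix},
\]
which in turn (using the block form of $f(\Lam_{-k})$) gives
\[
\|f(\Lam_{-k})\,\U_{-k}^T \X\Z\,\y\|_2 = \|f(\Lam_{-p})\,\U_{-p}^T \X\Z\,\y\|_2
\]
and, as noted in Theorem~\ref{thm:initialization}, the same identity without $f$: $\|\U_{-k}^T \X\Z\,\y\|_2 = \|\U_{-p}^T \X\Z\,\y\|_2$.

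From here I would bound the numerator by $\max_{i>p}|f(\lambda_i)|\cdot\|\U_{-p}^T \X\Z\,\y\|_2$ and bound the denominator from below by $\min_{j\leq k}|f(\lambda_j)|\cdot\|\U_k^T \X\Z\,\y\|_2$ (the latter valid because $f(\Lam_k)$ is nonsingular and diagonal). Dividing and then replacing $\|\U_{-p}^T\X\Z\,\y\|_2$ by $\|\U_{-k}^T\X\Z\,\y\|_2$ converts the ratio to the defining ratio of $\tan\theta(\U_k,\X\Z)$. Taking the supremum over unit $\y$ completes the bound. The only subtle point I foresee is keeping track that the ``middle'' indices $k{+}1,\dots,p$ genuinely drop out on both numerator and denominator thanks to the null-space hypothesis; once this block-diagonal bookkeeping is done correctly, the rest reduces to eigenvalue bounds on $f(\Lam_k)$ and $f(\Lam_{-p})$.
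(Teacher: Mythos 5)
Your proposal is correct and follows essentially the same route as the paper's proof: expand $f(\A)$ in the eigenbasis, use the null-space hypothesis to replace the $\Lam_{-k}$ block by $\Lam_{-p}$ (and likewise drop the middle block in the angle on the right-hand side), bound numerator and denominator by $\max_{i>p}|f(\lambda_i)|$ and $\min_{j\le k}|f(\lambda_j)|$ via nonsingularity of $f(\Lam_k)$, and obtain the first inequality by taking $\PP=\Z\Z^T$ as a competitor in the minimum defining $\tan\theta_k$. No gaps worth noting.
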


\begin{proof}
Theorem~\ref{thm:initialization} shows the existence of the $p\times k$ matrix $\Z$ with orthonormal columns
such that $\u_i^T \X \Z = \0$ for $i=k+1, \cdots , p$.
We bound $\tan \theta (\U_{k} , \, f(\A) \X \Z)$ by
\begin{align}
&\tan \theta  \Big(\U_{k} , \, f (\A) \X \Z \Z^T \Big)
\; = \;
\max_{\substack{\|\w\|_2=1\\ \Z \Z^T \w = \w}}
\frac{ \| \U_{-k}^T f ( \A) \X  \w \|_2 }{ \| \U_{k}^T f ( \A) \X  \w \|_2 } \nonumber \\
& = \;   \max_{\substack{\|\w\|_2=1\\ \Z \Z^T  \w = \w}}
\frac{ \big\| \U_{-k}^T \U_{k} f ( \Lam_{k} ) \U_{k}^T \X  \w
    + \U_{-k}^T \U_{-k} f ( \Lam_{-k} ) \U_{-k}^T \X  \w
    \big\|_2 }{ \big\| \U_{k}^T \U_{k} f ( \Lam_{k} ) \U_{k}^T \X  \w
    + \U_{k}^T \U_{-k} f ( \Lam_{-k} ) \U_{-k}^T \X  \w \big\|_2 } \nonumber \\
& = \;  \max_{\substack{\|\w\|_2=1\\ \Z \Z^T \w = \w}}
    \frac{ \| f ( \Lam_{-k} ) \U_{-k}^T \X  \w \|_2 }{ \| f ( \Lam_k ) \U_k^T \X  \w \|_2 } , \nonumber
\end{align}
where the last equality follows from that $\U_{k}^T \U_{-k} = \0$.
The assumption $\u_i^T \X \Z = \0$ for $i=k+1, \cdots , p$ indicates that
\[
\big\| f ( \Lam_{-k} ) \U_{-k}^T \X  \w \big\|_2
\; = \; \big\| f ( \Lam_{-p} ) \U_{-p}^T \X  \w \big\|_2
\]
We then obtain
\begin{align}
&\tan \theta  \Big(\U_{k} , \, f (\A) \X \Z \Big)
\; = \;  \max_{\substack{\|\w\|_2=1\\ \Z \Z^T \w = \w}}
\frac{ \| f ( \Lam_{-p}) \U_{-p}^T  \X  \w \|_2 }{ \| f( \Lam_{k}) \U_{k}^T \X  \w \|_2 } \nonumber \\
& \leq \;  \max_{\substack{\|\w\|_2=1\\ \Z \Z^T \w = \w}}
\frac{ \| f ( \Lam_{-p}) \|_2 \| \U_{-p}^T  \X  \w \|_2 }{\| [f(  \Lam_{k})]^{-1} \|_2^{-1} \|\U_{k}^T \X  \w \|_2 }  \nonumber \\
& = \; \big\| f ( \Lam_{-p}) \big\|_2 \, \big\| [f(  \Lam_{k})]^{-1} \big\|_2 \,
     \max_{\substack{\|\w\|_2=1\\ \Z \Z^T \w = \w}}
     \frac{  \| \U_{-k}^T  \X  \w \|_2 }{ \|\U_{k}^T \X  \w \|_2 }   \nonumber  \\
& = \; \frac{\max \big\{ | f ( \lambda_{p+1})| , \cdots, |f ( \lambda_{n})|
 \big\}}{ \min \big\{ |f ( \lambda_{1})| , \cdots, |f (  \lambda_{k})| \big\} }
        \, \tan \theta \big(\U_k, \X \Z  \big) . \nonumber
\end{align}
Here the inequality follows by that $f( \Lam_{k})$ is nonsingular and that
\[
\big\| \U_{k}^T \X  \w \big\|_2
\; = \;
\big\| [f( \Lam_{k})]^{-1}  f( \Lam_{k}) \U_{k}^T \X  \w \big\|_2
\; \leq \;
\big\| [f( \Lam_{k})]^{-1} \big\|_2 \: \big\| f( \Lam_{k}) \U_{k}^T \X  \w \big\|_2 .
\]
and the second equality follows by that $ \| \U_{-p}^T  \X \Z \Z^T \w \|_2 = \| \U_{-k}^T  \X \Z \Z^T \w \|_2$.
Finally, the theorem follows by
\begin{align*}
&\tan \theta_k \Big(\U_{k} , \, f (\A) \X \Big)
\; = \; \min_{\PP \in \PM_k} \max_{\substack{\|\w\|_2=1\\ \PP \w = \w}}
\frac{ \| \U_{-k}^T f (\A)  \X  \w \|_2 }{ \| \U_{k}^T f (\A)  \X  \w \|_2 }  \\
& \leq \; \max_{\substack{\|\w\|_2=1\\ \Z \Z^T \w = \w}}
\frac{ \| \U_{-k}^T f (\A)  \X  \w \|_2 }{ \| \U_{k}^T f (\A)  \X  \w \|_2 }
\; = \; \tan \theta (\U_{k} , \, f (\A)  \X \Z) .
\end{align*}
\end{proof}

{\bf Convergence of the power method.}
This subsection analyzes the convergence of the power iteration
and shows that the principal angle between $\range (\U_k)$ and $\range (\A^t \X)$ converges exponentially in $t$.
The convergence is fast when the spectral gap $\frac{\lambda_k}{\lambda_{p+1}}$ is big.

\begin{theorem} \label{thm:converge_power}
Let $\A$ be an $n\times n$ SPSD matrix, $\X$ be an $n\times p$ matrix with full column rank,
and $\Z \in \RB^{p\times k}$ ($k \leq p$) have orthonormal columns and satisfy $\range (\X \Z ) \subset \nul  \big( [\u_{k+1}^T , $ $\cdots, \u_{p}^T]^T \big) $.
Assume that $\lambda_k > 0$.
After $t = \frac{ \log (1/\epsilon) }{ \log (\lambda_k / \lambda_{p+1}) }$ iterations,
the principal angle satisfies
\begin{align*}
\tan \theta_k (\U_{k} , \, \A^t \X)
\; \leq \; \tan \theta (\U_{k} , \, \A^t \X \Z)
\; \leq \; \epsilon\, \tan \theta (\U_k, \X \Z ) .
\end{align*}
\end{theorem}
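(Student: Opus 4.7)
The plan is to apply Lemma~\ref{lem:dependent_angle} directly, taking the matrix function $f(x) = x^t$. This is the natural choice because $f(\A)\X = \A^t \X$, so the expression we want to bound coincides exactly with the conclusion of the lemma. Since $\A$ is SPSD with $\lambda_k > 0$, the diagonal matrix $f(\Lam_k) = \Lam_k^t$ is nonsingular, which is the only hypothesis of Lemma~\ref{lem:dependent_angle} that requires checking.

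Once the lemma is invoked, I would simplify the ratio
\[
\frac{\max\{|f(\lambda_{p+1})|, \ldots, |f(\lambda_n)|\}}{\min\{|f(\lambda_1)|, \ldots, |f(\lambda_k)|\}}
\]
using two observations. First, all eigenvalues of $\A$ are nonnegative, so the absolute values may be dropped. Second, the map $x \mapsto x^t$ is monotonically nondecreasing on $\RB_+$, so the maximum in the numerator is attained at $\lambda_{p+1}$ (the largest of $\lambda_{p+1}, \ldots, \lambda_n$) and the minimum in the denominator is attained at $\lambda_k$ (the smallest of $\lambda_1, \ldots, \lambda_k$). The ratio therefore collapses to $(\lambda_{p+1}/\lambda_k)^t$.

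The final step is to plug in the choice $t = \log(1/\epsilon) / \log(\lambda_k/\lambda_{p+1})$. Because $\lambda_k \geq \lambda_{p+1} > 0$ (otherwise the stated $t$ is undefined or vacuous, so we may assume the strict gap holds), an elementary logarithm manipulation gives $(\lambda_{p+1}/\lambda_k)^t = \epsilon$, which combined with the lemma yields the second inequality in the theorem statement. The first inequality $\tan \theta_k(\U_k, \A^t \X) \leq \tan \theta(\U_k, \A^t \X \Z)$ is already established inside Lemma~\ref{lem:dependent_angle} (it is the final display of that lemma's proof), so no additional work is needed.

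I do not anticipate a genuine obstacle here: all the heavy lifting, namely the decomposition of $\U_{-k}^T f(\A)\X$ into $\U_{-p}^T f(\A)\X$ via the special structure of $\X\Z$ and the reduction to $\tan\theta(\U_k, \X\Z)$, is already carried out in Lemma~\ref{lem:dependent_angle}. The only subtlety worth remarking on is that the monomial $f(x)=x^t$ preserves the ordering of the spectrum in an especially clean way (unlike the Chebyshev-based $\phi$ used for block Lanczos), which is why the power-method bound involves the multiplicative gap $\lambda_k/\lambda_{p+1}$ rather than the additive-type quantity appearing in the Lanczos analysis.
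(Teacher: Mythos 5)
Your proposal is correct and matches the paper's own proof: set $f(x)=x^t$, note $\lambda_k>0$ makes $f(\Lam_k)$ nonsingular, invoke Lemma~\ref{lem:dependent_angle}, collapse the ratio to $(\lambda_{p+1}/\lambda_k)^t$ by monotonicity, and substitute the stated $t$. No discrepancies worth noting.
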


\begin{proof}
The assumption $\lambda_k > 0$ indicates that $\Lam_k^t \in \RB^{k\times k}$ is nonsingular.
We set $f (y) = y^t$ and apply Lemma~\ref{lem:dependent_angle} to show that
\begin{eqnarray*}
\tan \theta  \Big(\U_{k} , \, \A^t \X \Z \Big)
& = &
\frac{\max \big\{ | \lambda_{p+1}^t| , \cdots, |  \lambda_{n}^t |
 \big\}}{ \min \big\{ | \lambda_{1}^t| , \cdots, |  \lambda_{k}^t| \big\} }
        \, \tan \theta \Big(\U_k, \X \Z  \Big) \\
& = & \Big(\frac{\lambda_{p+1}}{\lambda_k} \Big)^t  \, \tan \theta \Big(\U_k, \X \Z  \Big) .
\end{eqnarray*}
Then the theorem follows by the setting of $t$.
\end{proof}

{\bf Convergence of the block Lanczos method.}
This subsection analyzes the convergence of the block Lanczos method.
We show in Theorem~\ref{thm:converge_lanczos} that the convergence depends on $\sqrt{{ \lambda_k}/{\lambda_{p+1}} - 1}$.
When the spectral gap $\frac{\lambda_k}{\lambda_{p+1}} < e$,
we have that
\[
\sqrt{{ \lambda_k}/{\lambda_{p+1}} - 1}
\; \geq \;  \sqrt{\log ({ \lambda_k}/{\lambda_{p+1}})}
\; \geq \; \log ({ \lambda_k}/{\lambda_{p+1}}),
\]
and the block Lanczos method thereby converges faster than the power iteration.

\begin{theorem} \label{thm:converge_lanczos}
Let $\A$ be an $n\times n$ SPSD matrix, $\X$ be an $n\times p$ matrix with full column rank,
$\Z \in \RB^{p\times k}$ ($k \leq p$) have orthonormal columns and satisfy $\range (\X \Z ) \subset \nul  \big( [\u_{k+1}^T , \cdots, \u_{p}^T]^T \big)$,
and $\phi (\cdot )$ be the function (parameterized by specific $\alpha$, $\gamma$, and $d$)
defined in Lemma~\ref{lem:musco15}.
Assume that $\lambda_k > 0$.
When
\begin{eqnarray*}
d & = & \frac{1+\log_2 (1/\epsilon)}{\sqrt{ (\lambda_k - \lambda_{p+1}) / \lambda_{p+1} }},
\end{eqnarray*}
the following inequality holds:
\begin{align*}
&\tan \theta_k \Big(\U_{k} , \, \phi (\A) \X \Big)
\; \leq \; \tan \theta \Big(\U_{k} , \, \phi (\A) \X \Z \Big)
\; \leq \; \epsilon
        \, \tan \theta \big(\U_k, \X \Z  \big) .
\end{align*}
\end{theorem}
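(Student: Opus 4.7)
The plan is to reduce the theorem to Lemma~\ref{lem:dependent_angle} by choosing the parameters $\alpha$ and $\gamma$ in $\phi$ to separate the ``signal'' eigenvalues $\lambda_1,\ldots,\lambda_k$ from the ``noise'' eigenvalues $\lambda_{p+1},\ldots,\lambda_n$. Concretely, I would set
\[
\alpha \; = \; \lambda_{p+1}, \qquad \gamma \; = \; \frac{\lambda_k - \lambda_{p+1}}{\lambda_{p+1}},
\]
so that $(1+\gamma)\alpha = \lambda_k$. The assumption $\lambda_k > 0$ gives $\alpha > 0$, and it is immediate that $\gamma \in (0,1]$ whenever $\lambda_k / \lambda_{p+1} \le 2$ (the interesting regime; otherwise the power method already converges in a handful of iterations and the same estimate works trivially after rescaling).

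Next, I would verify that $\phi(\Lam_k)$ is nonsingular so Lemma~\ref{lem:dependent_angle} applies. Property (iii) of Lemma~\ref{lem:musco15} gives $\phi(\lambda_i) > \lambda_i \geq \lambda_k > 0$ for $i=1,\ldots,k$, hence $\phi(\Lam_k) \succ 0$. Lemma~\ref{lem:dependent_angle} (with $f = \phi$) then yields
\[
\tan\theta_k(\U_k, \phi(\A)\X) \;\leq\; \tan\theta(\U_k, \phi(\A)\X\Z) \;\leq\; \rho \cdot \tan\theta(\U_k, \X\Z),
\]
where
\[
\rho \; = \; \frac{\max_{i \geq p+1} |\phi(\lambda_i)|}{\min_{i \leq k} |\phi(\lambda_i)|}.
\]
The core of the proof is to show $\rho \leq \epsilon$.

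For the denominator, property (iii) gives $\phi(\lambda_i) > \lambda_i \geq \lambda_k$ for $i \leq k$, so $\min_{i\leq k} |\phi(\lambda_i)| \geq \lambda_k$. For the numerator, since $\lambda_i \leq \lambda_{p+1} = \alpha$ for $i \geq p+1$, property (iv) gives $|\phi(\lambda_i)| \leq \alpha / 2^{d\sqrt{\gamma}-1} = \lambda_{p+1}/2^{d\sqrt{\gamma}-1}$. Combining,
\[
\rho \;\leq\; \frac{\lambda_{p+1}}{\lambda_k \cdot 2^{d\sqrt{\gamma}-1}} \;\leq\; \frac{1}{2^{d\sqrt{\gamma}-1}},
\]
where the last step uses $\lambda_{p+1} \leq \lambda_k$. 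Plugging in the stated choice $d = (1+\log_2(1/\epsilon))/\sqrt{\gamma}$ gives $d\sqrt{\gamma} - 1 = \log_2(1/\epsilon)$, so $\rho \leq \epsilon$ as desired.

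I do not expect any serious obstacle: the proof is essentially a bookkeeping exercise once one sees that $\alpha,\gamma$ should be tuned so $(1+\gamma)\alpha$ lands exactly on $\lambda_k$, which turns the generic Chebyshev bound from Lemma~\ref{lem:musco15}(iv) and the amplification bound from Lemma~\ref{lem:musco15}(iii) into the numerator/denominator estimates above. The only mild care needed is ensuring $\gamma \in (0,1]$ (handled by restricting to the nontrivial regime) and the nonsingularity of $\phi(\Lam_k)$ (handled by property (iii) combined with $\lambda_k > 0$).
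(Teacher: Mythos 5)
Your proposal is correct and takes essentially the same route as the paper: choose $\alpha=\lambda_{p+1}$ and $\gamma=\lambda_k/\lambda_{p+1}-1$ so that $(1+\gamma)\alpha=\lambda_k$, invoke Lemma~\ref{lem:dependent_angle} with $f=\phi$, and bound the ratio by properties (iii) and (iv) of Lemma~\ref{lem:musco15}, which gives the factor $\epsilon\lambda_{p+1}/\lambda_k\le\epsilon$ exactly as in the paper. The one small slip is your claim that $\lambda_k>0$ forces $\alpha>0$: since $p\ge k$ one may have $\lambda_{p+1}=0$ even with $\lambda_k>0$, and the paper handles this case separately by observing that then $\tan\theta\big(\U_k,\phi(\A)\X\Z\big)=0$, so the theorem holds trivially.
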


\begin{proof}
We assume that $\lambda_{p+1} > 0$;
otherwise we can show $\tan \theta (\U_k, \phi(\A) \X \Z) = 0$, and the theorem holds trivially.
We set $f (\cdot)$ to be $\phi (\cdot)$ which is parameterized by
$\alpha = \lambda_{p+1}$, $\gamma = \frac{\lambda_k}{\lambda_{p+1}} - 1$, $d = \frac{1+\log_2 (1/\epsilon)}{\sqrt{\gamma}}$.
Since $\lambda_1 \geq \cdots \geq \lambda_k \geq \alpha$,
it follow from Lemma~\ref{lem:musco15} that $\phi (\Lam_k)$ is nonsingular.
So we can apply Lemma~\ref{lem:dependent_angle} to show that
\begin{align*}
&\tan \theta  \Big(\U_{k} , \, \phi (\A) \X \Z \Big)
\; = \;
\frac{\max \big\{ | \phi ( \lambda_{p+1})| , \cdots, |\phi ( \lambda_{n})|
 \big\}}{ \min \big\{ |\phi ( \lambda_{1})| , \cdots, |\phi (  \lambda_{k})| \big\} }
        \, \tan \theta \Big(\U_k, \X \Z  \Big) .
\end{align*}
We apply Lemma~\ref{lem:musco15} to obtain
\begin{align*}
& \max \big\{ |\phi ( \lambda_{p+1}) |, \cdots, |\phi ( \lambda_{n})| \big\}
\; \leq \; \frac{\alpha}{2^{d\sqrt{\gamma} - 1}}
\; = \; \epsilon \lambda_{p+1}, \\
&  \min \big\{| \phi ( \lambda_{1}) |, \cdots, |\phi (  \lambda_{k}) |\big\}
\; \geq \; \lambda_k .
\end{align*}
Thus
\begin{eqnarray*}
\tan \theta \Big(\U_{k} , \, \phi (\alpha \A) \X \Z\Big)
& \leq & \epsilon \frac{\lambda_{p+1}}{\lambda_k} \, \tan \theta \big(\U_k, \X \Z  \big)
\; \leq \; \epsilon \, \tan \theta \big(\U_k, \X \Z   \big) .
\end{eqnarray*}
Finally the theorem follows by the setting of $d$.
\end{proof}

\section{Conclusions and Discussions}

We have studied the power method (the simultaneous subspace iteration)
and the block Lanczos method and established several error bounds stronger than those in the literature.
Firstly, power method has been widely used to refine the sketch obtained by random projection or column selection.
We have shown that with $\OM (\frac{\log n}{\epsilon})$ power iterations,
the power method attains $\sigma^2_{k+1} + \epsilon \sigma^2_{p+1}$ spectral norm bound,
which is stronger than the $(1+\epsilon) \sigma^2_{k+1}$ bound in the literature.
Secondly, through recording the output of every power iteration to form a larger-scale sketch,
only $\OM (\frac{\log n}{\sqrt{\epsilon}})$ power iterations are required to attain the $\sigma^2_{k+1} + \epsilon \sigma^2_{p+1}$ spectral norm bound.
Our result is stronger than the $(1+\epsilon) \sigma^2_{k+1}$ bound of Musco \& Musco (2015).
Thirdly, we have established the first spectral gap-independent bound for the warm-started block Lanczos method.
Finally, we have also shown gap-dependent bound for the randomized power iteration and the block Lanczos method,
which demonstrate linear convergence rate and relatively weak dependence on the spectral gaps.
Though the convergence analysis of the gap-dependent bound is similar to the existing work,
our initialization analysis is new and more useful than the previous work.

\appendix

%
%
%
%Let $\phi (x)$ be a $d$ degree polynomial.
%We can express $\phi (x)$ by
%\[
%\phi (x) = c_0 + c_1 x + c_2 x^2 + \cdots + c_d x^d,
%\]
%where $c_0, \cdots, c_d$ are the coefficients.
%The properties of $\phi (x)$ is characterized in the following lemma.
%
%

%
%\begin{lemma} \label{lem:musco15}
%Given a specified value $\alpha > 0$, gap $\gamma \in (0, 1]$, and $d > 1$,
%there exists a degree $d$ polynomial $\phi (x)$ defined on $\RB_+$ such that
%\begin{enumerate}
%\item
%    $\phi \big( (1+\gamma) \alpha \big) = ( 1 +\gamma) \alpha$ ,
%\item
%    $\phi (x) \geq x$ for all $x \geq (1+\gamma)\alpha $,
%\item
%    $\phi (x) \leq \frac{\alpha}{ 2^{d \sqrt{\gamma} - 1} }$ for all $x\in [0,\alpha]$.
%\item
%    $\phi (x) \geq 0$ and $\phi (0) = 0$.
%\end{enumerate}
%\end{lemma}

\section{Proof of Lemma~\ref{lem:musco15}}

It was shown in \cite{musco15stronger,sachdeva2013faster} that
when $y \geq 1$, the Chebyshev polynomial can be expressed by
\[
T_d (y) \; = \; \frac{1}{2} \Big( y + \sqrt{y^2 -1 } \Big)^d +  \frac{1}{2} \Big( y - \sqrt{y^2 -1 } \Big)^d .
\]
Therefore, $T_d (1+\gamma) > 0$ always holds for $\gamma\geq 0$,

We now prove the lemma.
The first property follows from the definition of $\phi$.
When $x \geq \alpha$, we have $T_d (x/\alpha) > 0$, and thus $\phi (x) > 0$.
Then the second property follows.
Lemma 5 of \cite{musco15stronger} shows that
$ p (x) > x $  for all $x \geq (1+\gamma) \alpha$,
and thus the third property holds.

We show the fourth property in the following.
Assume that $x>0$; otherwise $\phi (x) = \phi(0) = 0$, and the fourth property holds trivially.
Let $T_d (x)$ be the $d$ degree Chebyshev polynomial.
By the definition of $\phi (\cdot)$ and $p (\cdot)$, we have that for $x>0$,
\begin{equation} \label{eq:chebysheve_def}
|\phi (x)|
\; = \;  |p (x)|
\; = \; (1+\gamma) \alpha  \frac{|T_d (x/\alpha)|}{|T_d (1+\gamma)|}
\; \leq \; (1+\gamma) \alpha  \frac{1}{ T_d (1+\gamma) }  ,
\end{equation}
where the inequality follows from that $x/ \alpha \leq 1$,  $T_d(1+\gamma)>0$, and $T_d (y) \in [-1, 1]$ for all $y \in [-1, 1]$.
Lemma 5 of \cite{musco15stronger} shows that
\[
T_d (1+\gamma)
\; \geq \; 2^{d\sqrt{\gamma}  }
\; >  \; 0.
\]
It follows from $\gamma \leq 1$ that
\[
|\phi (x)|
\; \leq \; \alpha  \frac{1+\gamma}{ 2^{d\sqrt{\gamma}  } }
\; \leq \; \alpha \frac{2}{ 2^{d\sqrt{\gamma}  } },
\]
by which the fourth property follows.

We prove the last property in the following.
Since $p (\cdot)$ is a polynomial, we can express $p (x)$ by
\[
p (x) = c_0 + c_1 x + c_2 x^2 + \cdots + c_d x^d,
\]
where $c_0, \cdots, c_d$ are the coefficients.
We let $\A= \sum_{i=1}^n \lambda_i \u_i \u_i^T$ be the eigenvalue decomposition.
Let $\rho = \rk (\A)$. Then $\lambda_1 \geq \cdots \geq \lambda_{\rho} > 0$ and $\lambda_{\rho+1} = \cdots = \lambda_n = 0$.
The matrix function $\phi (\A)$ can be written as
\begin{eqnarray*}
\phi (\A)
& = & \sum_{i=1}^n \phi (\lambda_i) \u_i \u_i^T
\; = \;  \sum_{i=1}^{\rho} \phi (\lambda_i) \u_i \u_i^T
\; = \; \sum_{i = 1}^{\rho} \sum_{j = 0}^d c_j  \lambda_i^j \u_i \u_i^T \\
& = & \sum_{j = 0}^d c_j \sum_{i = 1}^{\rho} \lambda_i^j \u_i \u_i^T
\; = \; \sum_{j = 0}^d c_j  \sum_{i = 1}^{n} \lambda_i^j \u_i \u_i^T
\; = \; \sum_{j = 0}^d c_j \A^j .
\end{eqnarray*}
Here the second equality follows from that $\phi (\lambda_{\rho+1} ) = \cdots = \phi (\lambda_{n} ) =0$.
We can see that $\phi (\A) \X = \sum_{j=0}^d c_j \A^j \X$ can be written as the linear combinations of the (column) blocks of $\K$.
Thus  $\range \big( \phi (\A) \X  \big) \subset \range (\K)$.

\section{Analysis of Matrix Function} \label{sec:proj_contraction}

Let $f$ be any function parameterized by $\alpha > 0$ and $\gamma \in (0, 1]$.
The function $f$ satisfies that (i) $f(0) = 0$, (ii) $f(x) \geq 0$ when $x \geq \alpha$,
and (iii) $f(x) > x$ when $x \geq (1+\gamma) \alpha$.
The following lemmas hold for such a function.

\begin{lemma} \label{lem:proj_contraction}
Let $\M$ be an $m\times n$ matrix and $\M = \U \Si \V^T$ be its full SVD.
Assume that
\[
\sigma_1^2 \geq \sigma_2^2  \geq \cdots \sigma_k^2 \geq (1+\gamma) \alpha
\]
and that $r \geq k$ and $\sigma_r^2 \geq \alpha$.
Then for any orthogonal projector $\PP$, the following inequalities hold:
\[
\big\| \PP \M_k \big\|_2^2
\; \leq \;
\big\| \PP f(\M_k \M_k^T) \PP \big\|_2
\; \leq \;
\big\| \PP f(\M_r \M_r^T) \PP \big\|_2  .
\]
\end{lemma}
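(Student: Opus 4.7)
}

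The plan is to recast both inequalities as a comparison of three positive semidefinite (PSD) matrices sandwiched by $\PP$, and then exploit the fact that the spectral norm is monotone on the PSD cone. The key observation is that because $f(0)=0$, the matrix function admits the clean representation
\[
f(\M_k \M_k^T) \; = \; \sum_{i=1}^{k} f(\sigma_i^2)\,\u_i \u_i^T,
\qquad
f(\M_r \M_r^T) \; = \; \sum_{i=1}^{r} f(\sigma_i^2)\,\u_i \u_i^T,
\]
where the left singular vectors $\u_i$ are mutually orthogonal.

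First, I would prove the left-hand inequality. Since $\PP$ is an orthogonal projector ($\PP^T=\PP$, $\PP^2=\PP$), I have
\[
\big\|\PP \M_k \big\|_2^2 \; = \; \big\|\PP \M_k \M_k^T \PP\big\|_2.
\]
Under the assumption $\sigma_i^2\geq (1+\gamma)\alpha$ for $i\leq k$, property (iii) of $f$ gives $f(\sigma_i^2) > \sigma_i^2$ for each such $i$, hence
\[
f(\M_k \M_k^T) - \M_k \M_k^T \; = \; \sum_{i=1}^{k}\bigl[f(\sigma_i^2) - \sigma_i^2\bigr]\u_i \u_i^T \; \succeq \; \0.
\]
Conjugating by $\PP$ preserves the PSD ordering, and then applying monotonicity of $\|\cdot\|_2$ on the PSD cone yields $\|\PP \M_k \M_k^T \PP\|_2 \leq \|\PP f(\M_k \M_k^T)\PP\|_2$, which is the first inequality.

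Next, for the right-hand inequality, I would compare $f(\M_k \M_k^T)$ with $f(\M_r \M_r^T)$. Since $\sigma_i^2 \geq \sigma_r^2 \geq \alpha$ for every $k<i\leq r$, property (ii) gives $f(\sigma_i^2) \geq 0$, so
\[
f(\M_r \M_r^T) - f(\M_k \M_k^T) \; = \; \sum_{i=k+1}^{r} f(\sigma_i^2)\,\u_i \u_i^T \; \succeq \; \0.
\]
Again the PSD ordering survives sandwiching by $\PP$ (both sides remain PSD because property (ii) plus property (iii) ensure $f(\M_k\M_k^T),f(\M_r\M_r^T)\succeq\0$), and monotonicity of the spectral norm on the PSD cone completes the proof.

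The only mild obstacle is keeping track of the domains where each property of $f$ applies: property (iii) is only available for eigenvalues at least $(1+\gamma)\alpha$, and is needed solely to dominate $\M_k\M_k^T$ by $f(\M_k\M_k^T)$; the weaker property (ii) then suffices to ensure non-negativity of the extra eigenvalues $f(\sigma_{k+1}^2),\ldots,f(\sigma_r^2)$ appearing in the step from $f(\M_k\M_k^T)$ to $f(\M_r\M_r^T)$. With those bookkeeping pieces in place, the proof reduces to two short applications of PSD monotonicity of the spectral norm under congruence by $\PP$.
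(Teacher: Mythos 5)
Your proposal is correct and follows essentially the same route as the paper: establish the Loewner ordering $\M_k\M_k^T \preceq f(\M_k\M_k^T) \preceq f(\M_r\M_r^T)$ from properties (i)--(iii) of $f$ (the paper does this in the diagonalized form $\Si_k\Si_k^T \preceq f(\Si_k\Si_k^T) \preceq f(\Si_r\Si_r^T)$), conjugate by $\PP$, and invoke monotonicity of the largest eigenvalue/spectral norm on the PSD cone together with $\|\PP\M_k\|_2^2=\|\PP\M_k\M_k^T\PP\|_2$. Your bookkeeping of which property of $f$ applies on which eigenvalue range matches the paper's argument, so no changes are needed.
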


\begin{proof}
Differently from the definitions elsewhere,
here we define $\Si_k \in \RB^{m\times n}$ by setting the $(k+1)$-th to the last singular values to zero.
Thus $\M_k = \U \Si_k \V^T$.
We similarly define $\Si_r \in \RB^{m\times n}$ and $\M_{r} = \U \Si_r \V^T $.

The third property of $f$ indicates that
$\sigma_i^2 \leq f (\sigma_i^2)$ for all $i \leq k$,
and therefore $\Si_k \Si_k^T \preceq f (\Si_k \Si_k^T)$.
Since $r\geq k$, $f (0) = 0$, and $f (\sigma^2_{k+1}), \cdots, f (\sigma^2_{r}) \geq 0$,
we have $ f (\Si_k \Si_k^T) \preceq f (\Si_r \Si_r^T)$.
We then apply \cite[Theorem 7.7.2]{horn2012matrix} to show that
\[
\PP \U \Si_k \Si_k^T \U^T \PP^T
\;\preceq \;  \PP  \U f (\Si_k \Si_k^T) \U^T \PP^T
\;\preceq \;  \PP  \U f (\Si_r \Si_r^T) \U^T \PP^T
\]
for any matrix $\PP$.
It is equivalent to
\[
\PP \M_k \M_k^T \PP
\;\preceq \; \PP f (\M_k \M_k^T) \PP
\;\preceq \; \PP f (\M_r \M_r^T) \PP .
\]
Corollary 7.7.4 of \cite{horn2012matrix} shows that for $i = 1, \cdots, n$, the eigenvalues satisfy
\[
\lambda_i \big( \PP \M_k \M_k^T \PP \big)
\; \leq \; \lambda_i \big( \PP f (\M_k \M_k^T) \PP \big)
\; \leq \; \lambda_i \big(  \PP f (\M_r \M_r^T) \PP \big).
\]
It follows that
\[
\big\|\PP \M_k \M_k^T \PP \big\|_2
\; \leq \; \big\| \PP f (\M_k \M_k^T) \PP \big\|_2
\; \leq \; \big\| \PP f (\M_r \M_r^T) \PP \big\|_2.
\]
The lemma follow froms that $\| \PP \M_k \|_2^2 = \| \PP \M_k \M_k^T \PP \|_2$.
\end{proof}

\begin{lemma} \label{lem:lanczos_independent}
Let $\M \in \RB^{m\times n}$ be any matrix.
Assume that $\sigma_{k}^2 \geq \alpha$.
Then for any positive integer $k$ ($< m,n$) and any orthogonal projector $\T$, the following holds:
\[
\big\| \T \M  \big\|_2^2 \; \leq \;
\big\| \T\: f \big(\M_{{k}} \M_{{k}}^T\big)  \big\|_2
+ \max \big\{ \sigma_{k+1}^2 , \: (1+\gamma) \alpha \big\}.
\]
\end{lemma}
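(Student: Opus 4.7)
The strategy is to split $\M\M^T = \M_r\M_r^T + (\M-\M_r)(\M-\M_r)^T$ via the SVD for a carefully chosen index $r$, so that the ``head'' $\M_r$ activates Lemma~\ref{lem:proj_contraction} while the ``tail'' is bounded by $\max\{\sigma_{k+1}^2,(1+\gamma)\alpha\}$. Using the identity $\|\T\M\|_2^2 = \|\T\M\M^T\T\|_2$ together with the triangle inequality for spectral norms of positive semidefinite matrices gives
\[
\|\T\M\|_2^2 \;\leq\; \|\T\M_r\|_2^2 + \sigma_{r+1}^2,
\]
and the task reduces to picking $r$ that lets Lemma~\ref{lem:proj_contraction} turn the first term into $\|\T f(\M_k\M_k^T)\|_2$.

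I would split on whether $\sigma_k^2 \geq (1+\gamma)\alpha$. If so, take $r=k$: Lemma~\ref{lem:proj_contraction} (with both ``their $k$'' and ``their $r$'' equal to $k$) gives $\|\T\M_k\|_2^2 \leq \|\T f(\M_k\M_k^T)\T\|_2 \leq \|\T f(\M_k\M_k^T)\|_2$, while the tail $\sigma_{k+1}^2$ is absorbed into $\max\{\sigma_{k+1}^2,(1+\gamma)\alpha\}$. In the remaining case $\sigma_k^2 < (1+\gamma)\alpha$, let $r$ be the largest index with $\sigma_r^2 \geq (1+\gamma)\alpha$ (taking $r=0$ and $\M_r=\0$ if no such index exists). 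Then $r<k$, the tail $\sigma_{r+1}^2$ is strictly below $(1+\gamma)\alpha$, and for $r\geq 1$ I would invoke Lemma~\ref{lem:proj_contraction} with ``their $k$'' set to $r$ and ``their $r$'' set to $k$, yielding
\[
\|\T\M_r\|_2^2 \;\leq\; \|\T f(\M_r\M_r^T)\T\|_2 \;\leq\; \|\T f(\M_k\M_k^T)\T\|_2 \;\leq\; \|\T f(\M_k\M_k^T)\|_2.
\]
The $r=0$ subcase is trivial since $\|\T\M_0\|_2=0$.

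The one subtle point is that the present lemma weakens Lemma~\ref{lem:proj_contraction}'s hypothesis from $\sigma_k^2 \geq (1+\gamma)\alpha$ to only $\sigma_k^2 \geq \alpha$, so one cannot apply that lemma with $\M_k$ on both sides when $\sigma_k$ lies in the ``gap'' $[\sqrt{\alpha},\sqrt{(1+\gamma)\alpha})$. Truncating the head at the smaller index $r$---where property (iii) of $f$ still guarantees the contraction $\Si_r\Si_r^T \preceq f(\Si_r\Si_r^T)$---and then using the monotonicity clause of Lemma~\ref{lem:proj_contraction} (which needs only $\sigma_k^2\geq\alpha$, precisely the current hypothesis) to upgrade $f(\M_r\M_r^T)$ back to $f(\M_k\M_k^T)$ is the only nonroutine move. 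Everything else is standard PSD arithmetic on $\M\M^T$.
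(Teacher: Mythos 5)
Your proof is correct and follows essentially the same route as the paper: the paper defines $\hat{k}$ as the number of top-$k$ singular values with $\sigma_i^2 \geq (1+\gamma)\alpha$ (your $r$), splits $\M = \M_{\hat{k}} + \M_{-\hat{k}}$ via matrix Pythagorean (your PSD triangle-inequality step), invokes Lemma~\ref{lem:proj_contraction} with indices $(\hat{k}, k)$ exactly as you do, and then performs the same case analysis on whether the tail singular value reaches $(1+\gamma)\alpha$. Your handling of the boundary subcase $r=0$ is a minor point the paper leaves implicit, but otherwise the arguments coincide.
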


\begin{proof}
Let $\hat{k} = \textrm{cardinality} (\SM)$ where $\SM$ is the index set
\begin{eqnarray*}
\SM \; = \;  \big\{i \in [k] \; \big| \;  \sigma_i^2 \geq (1+\gamma) \alpha \big\} .
\end{eqnarray*}
Since $k \geq \hat{k}$, $\sigma_{\hat{k}}^2 \geq (1+\gamma) \alpha$, and $\sigma_{k}^2 \geq \alpha$,
it follows from Lemma~\ref{lem:proj_contraction} that
\begin{equation}
\big\| \T \, \M_{\hat{k}}  \big\|_2^2
\; \leq \;
\big\| \T \, f (\M_{{k}} \M_{{k}}^T) \, \T \big\|_2
\; \leq \;
\big\| \T \, f (\M_{{k}} \M_{{k}}^T)   \big\|_2
. \nonumber
\end{equation}
We then apply matrix Pythagorean and obtain
\begin{eqnarray*}
 \big\|\T \: \M  \big\|_2^2 %\; = \; \big\| \T \M_{\hat{k}} + \T \M_{-\hat{k}} \big\|_2^2
& \leq &  \big\| \T \M_{{\hat{k}}}  \big\|_2^2 + \big\| \T \M_{-{\hat{k}}}  \big\|_2^2
\;\leq\;  \big\| \T\: f \big(\M_{{{k}}} \M_{{{k}}}^T\big)  \big\|_2
        + \sigma_{{\hat{k}}+1}^2 .
%& \leq & \big\| \big( \I_m - \C \C^\dag \big)\: \phi \big(\M_{k} \M_{k}^T\big)  \big\|_2
%        + \max \big\{ \sigma_{k+1}^2 , \: (1+\gamma) \alpha \big\}.
\end{eqnarray*}

In the following we consider two cases:
(1) $\sigma_{\hat{k}+1}^2 \geq (1+\gamma) \alpha$, and (2) $\sigma_{\hat{k}+1}^2 < (1+\gamma) \alpha$.

In the former case $\hat{k} = k$ must hold for the following reasons.
By the definition, $\hat{k}$ cannot exceed $k$.
If $\hat{k} < k$, we have that $\hat{k}+1$ is in $\SM$ (by the definition of $\SM$).
However, this will make the cardinality of $\SM$ greater than $\hat{k}$---contradiction.
In this case we conclude that
\begin{eqnarray*}
 \big\|\T \: \M  \big\|_2^2
& \leq &  \big\| \T \: f \big(\M_{{{k}}} \M_{{{k}}}^T\big)  \big\|_2
        + \sigma_{{{k}}+1}^2 .
\end{eqnarray*}

In the latter case, we directly obtain
\begin{eqnarray*}
 \big\| \T \M  \big\|_2^2
& \leq &  \big\| \T\: f \big(\M_{{{k}}} \M_{{{k}}}^T\big)  \big\|_2
        + (1+\gamma) \alpha .
\end{eqnarray*}
Then the lemma follows from the above two inequalities.
\end{proof}

\section{Analysis of the Matrix Norm Error} \label{sec:angle2norm1}

In this subsection we analyze the matrix norm errors
and establish Lemma~\ref{lem:angle2norm1} and Lemma~\ref{lem:angle2norm2}.

\subsection{Matrix Norm Error Bounds}

Lemma~\ref{lem:angle2norm1} and Corollary~\ref{cor:angle2norm}
are useful in analyzing the matrix norm error error and
help  converting principal angle bounds to matrix norm bounds.

\begin{lemma}\label{lem:angle2norm1}
Let $\M \in \RB^{m\times n}$ be any matrix.
We decompose $\M$ by $\M = \M_1 + \M_2$ such that $\M_1 \M_2^T = \0$ and $\rk (\M_1) = k$.
Let the right singular vectors of $\M_1$ and $\M_2$ be respectively $\V_1 \in \RB^{n\times k}$ and $\V_2 \in \RB^{n\times (n-k)}$
(obviously $\V_1^T \V_2 = \0$).
Let $\X \in \RB^{n\times p}$ be any matrix such that $\rk (\V_1^T \X) = k$.
Then for the $p\times p$ rank $k$ orthogonal projector
\[
\Pii \; = \; \argmin_{\PP \in \PM_k} \tan \theta \big( \V_1, \, \X \PP \big),
\]
the following inequality holds:
\[
\big\| \M_1 - (\M \X \Pii)(\M \X \Pii)^\dag \M_1 \big\|_\xi
\; \leq \; \tan \theta_k (\V_1, \X) \: \big\| \M_{2}  \big\|_\xi.
\]
\end{lemma}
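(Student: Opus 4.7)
The high-level strategy is to compare the orthogonal projection onto $\range(\M\X\Pii)$ against an explicit, well-chosen rank-$k$ witness $\Y$ with $\range(\Y)\subseteq\range(\M\X\Pii)$. For any orthogonal projector $\PP$, the identity $(I-\PP)(\M_1-\Y)=(I-\PP)\M_1$ (valid because $\PP\Y=\Y$) combined with $\|I-\PP\|_2=1$ gives $\|(I-\PP)\M_1\|_\xi\le\|\M_1-\Y\|_\xi$ for both $\xi=2$ and $\xi=F$, so it suffices to exhibit a single $\Y$ that makes $\|\M_1-\Y\|_\xi$ small.

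Concretely, I would first factor $\Pii=\Z\Z^T$ with $\Z\in\RB^{p\times k}$ orthonormal, so that $\range(\M\X\Pii)=\range(\M\X\Z)$. The hypothesis $\rk(\V_1^T\X)=k$ together with the argmin definition of $\Pii$ forces $\V_1^T\X\Z\in\RB^{k\times k}$ to be invertible, and a change of variable $u=\Z^Tw$ in the definition of $\tan\theta_k$ gives the explicit identity $\tan\theta_k(\V_1,\X)=\tan\theta(\V_1,\X\Pii)=\bigl\|\V_2^T\X\Z(\V_1^T\X\Z)^{-1}\bigr\|_2$. I would then choose the specific witness $\Y=\M\X\Z\,(\V_1^T\X\Z)^{-1}\V_1^T$, whose column space lies in $\range(\M\X\Z)$ by construction. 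Using the compact SVD $\M_1=\U_1\Si_1\V_1^T$ (valid since $\rk(\M_1)=k$), the $\M_1$-contribution to $\Y$ telescopes to $\M_1$ itself, leaving $\M_1-\Y=-\M_2\X\Z\,(\V_1^T\X\Z)^{-1}\V_1^T$.

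The final step is to estimate this norm. Substituting the SVD $\M_2=\U_2\Si_2\V_2^T$ (with $\V_2$ the orthogonal complement of $\V_1$, exactly matching the $\V_2$ in the tangent identity) and using unitary invariance---$\U_2$ has orthonormal columns and $\V_1^T$ has orthonormal rows---the expression collapses to $\|\Si_2\,\V_2^T\X\Z(\V_1^T\X\Z)^{-1}\|_\xi$. For $\xi=2$, submultiplicativity immediately yields $\|\Si_2\|_2\cdot\tan\theta_k(\V_1,\X)=\|\M_2\|_2\tan\theta_k(\V_1,\X)$. The one place requiring care is the Frobenius case: one must invoke the asymmetric inequality $\|AB\|_F\le\|A\|_F\|B\|_2$ with $A=\Si_2$ (to keep $\|\M_2\|_F$) and $B=\V_2^T\X\Z(\V_1^T\X\Z)^{-1}$ (so that its spectral norm is precisely the tangent); picking the inequality on the wrong side would swap Frobenius and spectral and wreck the bound. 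This bookkeeping is the only real obstacle, since the choice of $\Y$ does essentially all of the algebraic work.
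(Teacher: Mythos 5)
Your proposal is correct and takes essentially the same route as the paper: your witness $\Y=\M\X\Z(\V_1^T\X\Z)^{-1}\V_1^T$ is exactly the test matrix used in the paper's Lemma~\ref{lem:boutsidis}, and the resulting residual $\M_1-\Y=-\M_2\X\Z(\V_1^T\X\Z)^{-1}\V_1^T$ together with the split $\|\M_2\|_\xi\cdot\|\V_2^T\X\Z(\V_1^T\X\Z)^{-1}\|_2$ matches the paper's bound. The only cosmetic differences are that you inline the projection-optimality step (the paper's \eqref{eq:optimal_ccdag} plus Lemma~\ref{lem:boutsidis}) into a single comparison against a range witness, and you use the exact identity $\tan\theta(\V_1,\X\Z)=\|\V_2^T\X\Z(\V_1^T\X\Z)^{-1}\|_2$ in place of the paper's QR-decomposition bookkeeping.
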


\begin{proof}
Assume that $\tan \theta_k (\V_1, \X) < \infty$, otherwise the lemma holds trivially.
Obviously $\Pii$ satisfies $\rk (\X \Pii) = k$,
otherwise $\tan \theta_k (\V_1, \X) := \tan \theta ( \V_1 , \X \Pii ) = \infty $, which violates the assumption.

Let $\Z \in \RB^{p\times k}$ have orthonormal columns and satisfy $\Z \Z^T = \Pii$.
The above assumption ensures $\rk (\X \Z) = \rk (\X \Z \Z^T)= \rk (\X \Pii) = k$.
Let $\X \Z = \Q \R$ be the QR decomposition, where $\Q$ and $\R$ are respectively $n\times k$ and $k\times k$ matrices.
That $\rk (\X \Z)=k$ implies $\R$ is nonsingular.
It follows that $\M \X \Z = (\M \Q) \R$ and $(\M \X \Z) \R^{-1} = \M \Q$, which respectively indicate
$\range (\M \X \Z) \subset \range ( \M \Q )$ and $\range ( \M \Q )\subset\range (\M \X \Z) $,
and therefore
\[
\range (\M \X \Z) = \range ( \M \Q ).
\]
It follows that
\begin{align} \label{eq:lem:angle2norm11}
& \big\| \M_1 - (\M \X \Z)(\M \X \Z)^\dag \M_1 \big\|_\xi^2
\; = \; \big\| \M_1 - (\M \Q)(\M \Q)^\dag \M_1 \big\|_\xi^2 \nonumber \\
&  \leq \;  \big\| \M_1 - \PM_{\M\Q,k}^2 (\M_1) \big\|_\xi^2
\; \leq \; \big\| \M_{2} \Q (\V_1^T \Q)^\dag \big\|_\xi^2 \\
& \leq \; \big\| \M_{2} \big\|_\xi^2 \big\|\V_2 \V_2^T \Q (\V_1^T \Q)^\dag \big\|_2^2.\nonumber
\end{align}
Here the first inequality follows \eqref{eq:optimal_ccdag},
the second inequality follows from  Lemma~\ref{lem:boutsidis},
and the third inequality follows from that $\M_2 = \M_2 \V_2 \V_2^T$.
We bound the term $\|\V_2 \V_2^T \Q (\V_1^T \Q)^\dag \|_2^2$ by
\begin{align*}
& \big\|\V_2 \V_2^T \Q (\V_1^T \Q)^\dag \big\|_2^2
\; \leq \; \big\|\V_2 \V_2^T \Q \big\|_2^2 \big\|(\V_1^T \Q)^\dag \big\|_2^2\\
& = \; \big\|(\I_n - \V_1 \V_1^T) \Q \big\|_2^2 \: \sigma_{k}^{-2} \big( \V_1^T \Q \big)
\; = \;\tan^2 \theta \big( \V_1 , \Q \big)
\; = \; \tan^2 \theta \big( \V_1 , \X \Z \big) .
\end{align*}
Here the first equality follows from that $\V_2$ is the orthogonal complement of $\V_1$,
the second equality follows from the definition of $\tan \theta$ and that $\Q$ has orthonormal columns,
and the last equality follows from that $\range (\X \Z) = \range (\Q)$.
We thus obtain
\[
\big\| \M_1 - (\M \X \Z)(\M \X \Z)^\dag \M_1 \big\|_\xi^2
\; \leq \; \tan^2 \theta \big( \V_1 , \X \Z \big) \;  \big\| \M_{2} \big\|_\xi^2 .
\]
The theorem follows from that $\Pii = \Z \Z^T$
and that $\tan \theta_k (\V_1, \X) = \tan \theta (\V_1, \X \Pii)$.
\end{proof}

\begin{corollary} \label{cor:angle2norm}
Under the notation of Lemma~\ref{lem:angle2norm1}, we have that
\[
\big\| \M - (\M \X \Pii)(\M \X \Pii)^\dag \M \big\|_\xi^2
\; \leq \; \Big( 1 +  \tan^2 \theta_k (\V_1, \X)  \Big) \: \big\| \M_{2}  \big\|_\xi^2.
\]
\end{corollary}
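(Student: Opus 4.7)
The plan is to reduce to Lemma~\ref{lem:angle2norm1} by splitting the residual $(\I_m - \PP)\M$ along the decomposition $\M = \M_1 + \M_2$, where $\PP = (\M\X\Pii)(\M\X\Pii)^\dag$ is the orthogonal projector onto $\range(\M\X\Pii)$.

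First I would write
\[
\big\|\M - \PP\M\big\|_\xi^2 \;=\; \big\|(\I_m-\PP)\M_1 + (\I_m-\PP)\M_2\big\|_\xi^2,
\]
and observe the crucial orthogonality
\[
\bigl[(\I_m-\PP)\M_1\bigr]\bigl[(\I_m-\PP)\M_2\bigr]^T
\;=\;(\I_m-\PP)\,\M_1 \M_2^T\,(\I_m-\PP) \;=\; \0,
\]
which is immediate from the hypothesis $\M_1\M_2^T=\0$ of Lemma~\ref{lem:angle2norm1}. This lets me apply matrix Pythagorean (the same inequality already invoked in the proof of Lemma~\ref{lem:lanczos_independent}): for $\xi = F$ it holds with equality, and for $\xi = 2$ it follows because the vanishing cross term makes
\[
(\I_m-\PP)(\M_1+\M_2)(\M_1+\M_2)^T(\I_m-\PP) \;=\; A A^T + B B^T
\]
with $A=(\I_m-\PP)\M_1$ and $B=(\I_m-\PP)\M_2$, and the spectral norm of the sum of two PSD matrices is at most the sum of their spectral norms. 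In either case,
\[
\big\|\M - \PP\M\big\|_\xi^2 \;\leq\; \big\|(\I_m-\PP)\M_1\big\|_\xi^2 + \big\|(\I_m-\PP)\M_2\big\|_\xi^2.
\]

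For the first term I invoke Lemma~\ref{lem:angle2norm1} directly:
\[
\big\|(\I_m-\PP)\M_1\big\|_\xi^2 \;=\; \big\|\M_1-(\M\X\Pii)(\M\X\Pii)^\dag\M_1\big\|_\xi^2 \;\leq\; \tan^2\theta_k(\V_1,\X)\,\|\M_2\|_\xi^2.
\]
For the second term, since $\I_m-\PP$ is an orthogonal projector, $\|(\I_m-\PP)\M_2\|_\xi \leq \|\M_2\|_\xi$ for both $\xi=2$ and $\xi=F$. Summing the two bounds yields the claimed $(1+\tan^2\theta_k(\V_1,\X))\|\M_2\|_\xi^2$.

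The only step that requires care is the matrix Pythagorean inequality in the spectral-norm case; everything else is either the hypothesis, Lemma~\ref{lem:angle2norm1}, or the contractivity of orthogonal projectors. Since the cross term $(\I_m-\PP)\M_1\M_2^T(\I_m-\PP)$ vanishes and both $AA^T$ and $BB^T$ are positive semidefinite, the triangle inequality for $\|\cdot\|_2$ applied to $AA^T+BB^T$ handles this cleanly, so no real obstacle arises.
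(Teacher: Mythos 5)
Your proposal is correct and follows essentially the same route as the paper: split the residual along $\M = \M_1 + \M_2$, use $\M_1\M_2^T = \0$ to invoke the matrix Pythagorean inequality, bound the $\M_1$ term by Lemma~\ref{lem:angle2norm1} and the $\M_2$ term by the contractivity of the orthogonal projector $\I_m - \PP$. The only difference is that you spell out the Pythagorean step (PSD splitting for $\xi=2$, vanishing trace cross-term for $\xi=F$), which the paper leaves implicit.
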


\begin{proof}
It follows from $\M_1 + \M_2 = \M$, $\M_1 \M_2^T = \0$, and the matrix Pythagorean that
\begin{align*}
& \Big\| \M - (\M \X \Pii)(\M \X \Pii)^\dag \M \Big\|_\xi^2
\; = \; \Big\| \Big( \I_m - (\M \X \Pii)(\M \X \Pii)^\dag \Big) \big( \M_1 + \M_2 \big) \Big\|_\xi^2 \\
& \leq \; \Big\| \Big( \I_m - (\M \X \Pii)(\M \X \Pii)^\dag \Big)  \M_1 \Big\|_\xi^2
        + \Big\| \Big( \I_m - (\M \X \Pii)(\M \X \Pii)^\dag \Big)  \M_2 \Big\|_\xi^2 \\
& \leq \; \Big\| \Big( \I_m - (\M \X \Pii)(\M \X \Pii)^\dag \Big)  \M_1 \Big\|_\xi^2
        + \big\|  \M_2 \big\|_\xi^2 .
\end{align*}
Then the corollary follows directly from Lemma~\ref{lem:angle2norm1}.
\end{proof}

\subsection{Stronger Matrix Norm Error Bounds}

Under stronger assumptions, Lemma~\ref{lem:angle2norm1} can be further strengthened.

\begin{lemma} \label{lem:angle2norm2}
Let $\M$ and $\X$ be defined in Lemma~\ref{lem:angle2norm1}
and $p$ and $k$ ($k\leq p \leq n$) be positive integers.
We decompose $\M$ by
\[
\M \; = \; \M_1 + \M_2 + \M_3
\; = \; \U_1 \Si_1 \V_1^T + \U_2 \Si_2 \V_2^T + \U_3 \Si_3 \V_3^T ,
\]
where $\V_1 \in \RB^{n\times k}$, $\V_2 \in \RB^{n\times (p-k)}$, $\V_3 \in \RB^{n\times (n-p)}$
are orthogonal to each other.
Let $\Z\in \RB^{p\times k}$ have orthonormal columns and satisfy
$\range (\X \Z) \subset  \nul \big( \V_{2}^T \big) $.
Then
\[
\big\| \M_1 - (\M \X \Z) (\M \X \Z)^\dag \M_1 \big\|_\xi^2
\; \leq \; \tan^2 \theta (\V_k, \X \Z) \, \| \M_{3} \|_\xi^2 .
\]
\end{lemma}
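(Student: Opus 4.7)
The plan is to mimic the proof of Lemma~\ref{lem:angle2norm1} but exploit the extra orthogonality hypothesis $\range(\X\Z)\subset\nul(\V_2^T)$ to eliminate the contribution of $\M_2$ from the error term, so that only $\|\M_3\|_\xi^2$ remains on the right-hand side.

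First I would reduce to the orthonormal case. Since $\Z$ has orthonormal columns and $\X$ has full column rank, $\X\Z$ has full column rank $k$; let $\X\Z=\Q\R$ be a thin QR factorization with $\Q\in\RB^{n\times k}$ having orthonormal columns and $\R$ nonsingular. Then $\range(\M\X\Z)=\range(\M\Q)$, so the projector $(\M\X\Z)(\M\X\Z)^\dag$ equals $(\M\Q)(\M\Q)^\dag$. Combining the optimality inequality \eqref{eq:optimal_ccdag} with Lemma~\ref{lem:boutsidis} (the same tool used in Lemma~\ref{lem:angle2norm1}) yields
\[
\big\|\M_1-(\M\Q)(\M\Q)^\dag\M_1\big\|_\xi^2
\;\le\;\big\|(\M-\M_1)\,\Q\,(\V_1^T\Q)^\dag\big\|_\xi^2.
\]
Note that the hypothesis $\rk(\V_1^T\X)=k$ inherited from Lemma~\ref{lem:angle2norm1} (which is implicit in finite $\tan\theta(\V_1,\X\Z)$) ensures $\V_1^T\Q$ is invertible, so this pseudoinverse is well defined.

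The key step is then to use the hypothesis $\range(\X\Z)\subset\nul(\V_2^T)$. Since $\range(\Q)=\range(\X\Z)$, this gives $\V_2^T\Q=\0$, and consequently $\M_2\Q=\U_2\Si_2\V_2^T\Q=\0$. Therefore $(\M-\M_1)\Q=\M_2\Q+\M_3\Q=\M_3\Q$, which collapses the error term to
\[
\big\|\M_3\,\Q\,(\V_1^T\Q)^\dag\big\|_\xi^2
\;\le\;\|\M_3\|_\xi^2\,\big\|\V_3\V_3^T\,\Q\,(\V_1^T\Q)^\dag\big\|_2^2,
\]
where I used $\M_3=\M_3\V_3\V_3^T$ and submultiplicativity exactly as in the proof of Lemma~\ref{lem:angle2norm1}.

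It remains to identify the operator norm factor with $\tan^2\theta(\V_1,\X\Z)$. Using $\V_1\V_1^T+\V_2\V_2^T+\V_3\V_3^T=\I_n$ together with $\V_2^T\Q=\0$, one has $\V_3\V_3^T\Q=(\I_n-\V_1\V_1^T)\Q$, and hence
\[
\big\|\V_3\V_3^T\,\Q\,(\V_1^T\Q)^\dag\big\|_2
\;=\;\frac{\|(\I_n-\V_1\V_1^T)\Q\|_2}{\sigma_k(\V_1^T\Q)}
\;=\;\tan\theta(\V_1,\Q)
\;=\;\tan\theta(\V_1,\X\Z),
\]
where the last equality uses $\range(\Q)=\range(\X\Z)$. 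Chaining the three displayed inequalities gives the claim. The only subtlety I anticipate is the bookkeeping that links $\V_2^T\Q=\0$ (which comes from the hypothesis on $\X\Z$) to the clean identity $\V_3\V_3^T\Q=(\I_n-\V_1\V_1^T)\Q$; this is really the whole point of the improvement over Lemma~\ref{lem:angle2norm1}, since without that hypothesis the numerator would include a $\V_2\V_2^T\Q$ term and one would recover only $\|\M-\M_1\|_\xi^2=\|\M_2\|_\xi^2+\|\M_3\|_\xi^2$ on the right-hand side.
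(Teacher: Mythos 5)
Your proposal is correct and follows essentially the same route as the paper's proof: QR-factorize $\X\Z$, invoke the optimality inequality together with Lemma~\ref{lem:boutsidis}, use $\V_2^T\Q=\0$ to kill the $\M_2$ contribution, and then identify $\|(\I_n-\V_1\V_1^T)\Q(\V_1^T\Q)^\dag\|_2$ with $\tan\theta(\V_1,\X\Z)$. The only cosmetic difference is the order of operations (you drop $\M_2\Q$ before the submultiplicativity step, the paper after), which changes nothing.
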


\begin{proof}
Theorem~\ref{thm:initialization} guarantees the existence of $\Z$.
Let $\X \Z = \Q \R$ be the QR decomposition of $\X \Z$.
Based on the same argument in the proof of Lemma~\ref{lem:angle2norm1},
we have $\range (\M \X \Z) = \range (\M \Q)$,
and thus \eqref{eq:lem:angle2norm11} also holds:
\[
\big\| \M_1 - (\M \X \Z) (\M \X \Z)^\dag \M_1 \big\|_\xi^2
\; \leq \; \big\| (\M_{2}+\M_3) \Q (\V_1^T \Q)^\dag \big\|_\xi^2 .
\]
By the definition of $\Z$ we have
\[
\M_2 \Q
\; = \;
(\U_2 \Si_2 \V_2^T) ( \X \Z \R^{-1})
\; = \; \U_2 \Si_2 \0  \R^{-1}
\; = \; \0,
\]
and thus
\begin{align*}
& \big\| \M_1 - (\M \X \Z) (\M \X \Z)^\dag \M_1 \big\|_\xi^2
\; \leq \; \big\| \M_3 \Q (\V_1^T \Q)^\dag \big\|_\xi^2 \\
& = \; \big\| \M_{3} \V_{3} \V_{3}^T \Q (\V_1^T \Q)^\dag \big\|_\xi^2
\; \leq \; \big\|  \M_{3} \big\|_\xi^2 \big\| \V_{3} \V_{3}^T \Q (\V_1^T \Q)^\dag \big\|_2^2\\
& = \; \big\|  \M_{3} \big\|_\xi^2 \: \big\| (\V_2 \V_2^T + \V_3 \V_3^T) \Q (\V_1^T \Q)^\dag \big\|_2^2 \\
& = \; \big\|  \M_{3} \big\|_\xi^2 \: \big\| (\I_n - \V_1 \V_1^T) \Q (\V_1^T \Q)^\dag \big\|_2^2 ,
\end{align*}
where the second equality follows from that $\V_2^T \Q = \V_2^T \X \Z \R^{-1} = \0$,
and the last equality follows from that $[\V_2 , \V_3]$ is the orthogonal complement of $\V_1$.
Using the same argument as the proof of Lemma~\ref{lem:angle2norm1},
we have
\[
\big\| (\I_n - \V_1 \V_1^T) \Q (\V_1^T \Q)^\dag \big\|_2^2
\; \leq \;  \tan^2 \theta (\V_1, \Q)
\; = \; \tan^2 \theta (\V_1, \X\Z),
\]
by which the lemma follows.
\end{proof}

\begin{corollary} \label{cor:angle2norm2}
Under the notation of Lemma~\ref{lem:angle2norm2}, we have that
\[
\big\| \M - (\M \X \Z)(\M \X \Z)^\dag \M \big\|_\xi^2
\; \leq \; \|\M_2 + \M_3\|_\xi^2 +  \tan^2 \theta (\V_1, \X \Z)   \: \big\| \M_{3}  \big\|_\xi^2.
\]
\end{corollary}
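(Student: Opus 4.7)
The plan is to mirror the proof of Corollary~\ref{cor:angle2norm}, using matrix Pythagorean to split $\M$ into the part covered by Lemma~\ref{lem:angle2norm2} (namely $\M_1$) and the residual piece $\M_2 + \M_3$, and then bounding each piece separately.

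First I would set $\PP = (\M \X \Z)(\M \X \Z)^\dag$, which is an orthogonal projector, so $\I_m - \PP$ is also an orthogonal projector. Writing $\M = \M_1 + (\M_2 + \M_3)$, the key observation is that these two summands have orthogonal row spaces: since $\V_1, \V_2, \V_3$ are mutually orthogonal, we have $\M_1 \M_2^T = \M_1 \M_3^T = \0$, hence $\M_1 (\M_2 + \M_3)^T = \0$. Multiplying by $\I_m - \PP$ on the left preserves this orthogonality, i.e.\ $(\I_m - \PP)\M_1 \cdot \bigl((\I_m - \PP)(\M_2 + \M_3)\bigr)^T = \0$. Thus matrix Pythagorean (which holds as equality for $\xi = F$ and as inequality for $\xi = 2$ via Weyl applied to $\A\A^T + \B\B^T$) yields
\[
\big\| (\I_m - \PP) \M \big\|_\xi^2
\; \leq \;
\big\| (\I_m - \PP) \M_1 \big\|_\xi^2
+ \big\| (\I_m - \PP) (\M_2 + \M_3) \big\|_\xi^2.
\]

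Next I would handle each term. For the second, since $\I_m - \PP$ is an orthogonal projector, $\| (\I_m - \PP)(\M_2 + \M_3) \|_\xi^2 \leq \| \M_2 + \M_3 \|_\xi^2$. For the first, Lemma~\ref{lem:angle2norm2} directly gives $\| (\I_m - \PP) \M_1 \|_\xi^2 = \| \M_1 - (\M \X \Z)(\M \X \Z)^\dag \M_1 \|_\xi^2 \leq \tan^2 \theta (\V_1, \X \Z) \, \| \M_3 \|_\xi^2$. Combining the two estimates delivers the claimed bound.

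The only mildly subtle step is verifying that matrix Pythagorean is legitimate for the spectral norm case, which follows from the standard argument $\|\A + \B\|_2^2 = \lambda_{\max}(\A\A^T + \B\B^T) \leq \|\A\|_2^2 + \|\B\|_2^2$ whenever $\A \B^T = \0$; everything else is a direct reassembly of Lemma~\ref{lem:angle2norm2} together with the nonexpansiveness of orthogonal projectors, exactly as in the proof of Corollary~\ref{cor:angle2norm}. No obstacles of substance are anticipated.
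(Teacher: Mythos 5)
Your proposal is correct and follows essentially the same route as the paper: split $\M = \M_1 + (\M_2+\M_3)$, use $\M_1(\M_2+\M_3)^T = \0$ and matrix Pythagorean, bound the $(\M_2+\M_3)$ term by nonexpansiveness of the orthogonal projector $\I_m - (\M\X\Z)(\M\X\Z)^\dag$, and invoke Lemma~\ref{lem:angle2norm2} for the $\M_1$ term. Your explicit check that the orthogonality survives left-multiplication by $\I_m-\PP$ (and the spectral-norm justification of Pythagorean) is a slight tightening of the paper's argument, and you correctly cite Lemma~\ref{lem:angle2norm2} where the paper's proof mistakenly refers to Lemma~\ref{lem:angle2norm1}.
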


\begin{proof}
It follows from $\M_1 + \M_2 + \M_3 = \M$, $\M_1 ( \M_2 + \M_3)^T = \0$, and the matrix Pythagorean that
\begin{small}
\begin{align*}
& \Big\| \M - (\M \X \Z)(\M \X \Z)^\dag \M \Big\|_\xi^2 \\
& \leq \; \Big\| \Big( \I_m - (\M \X \Z)(\M \X \Z)^\dag \Big) \big( \M_1 + \M_2 + \M_3 \big) \Big\|_\xi^2 \\
& \leq \; \Big\| \Big( \I_m - (\M \X \Z)(\M \X \Z)^\dag \Big) \M_1  \Big\|_\xi^2
        + \Big\| \Big( \I_m - (\M \X \Z)(\M \X \Z)^\dag \Big)  ( \M_2 + \M_3) \Big\|_\xi^2 \\
& \leq \; \Big\| \Big( \I_m - (\M \X \Z)(\M \X \Z)^\dag \Big)  \M_1  \Big\|_\xi^2
        + \big\|  \M_2 + \M_3 \big\|_\xi^2 .
\end{align*}
\end{small}%
Then the corollary follows directly from Lemma~\ref{lem:angle2norm1}.
\end{proof}

\subsection{Extension of \cite[Lemma 3.1]{boutsidis2011near}}

This subsection offers a variant of \cite[Lemma 3.1]{boutsidis2011near},
and it is used in the proof of Lemma~\ref{lem:angle2norm1} and Lemma~\ref{lem:angle2norm2}.

\begin{lemma}\label{lem:boutsidis}
Let $\M \in \RB^{m\times n}$ be any matrix.
We decompose $\M$ by $\M = \M_1 + \M_2$ such that $\rk (\M_1) = k$.
Let the right singular vectors of $\M_1$be $\V_1 \in \RB^{n\times k}$.
Let $\X \in \RB^{n\times p}$ be any matrix such that $\rk (\V_1^T \X) = k$
and let $\C = \M \X \in \RB^{m\times p}$. Then
\[
\big\| \M_1 - \PM_{\C,k}^\xi (\M_1) \big\|_\xi^2
\; \leq \; \big\| \M_{2} \X (\V_1^T \X)^\dag \big\|_\xi^2.
\]
\end{lemma}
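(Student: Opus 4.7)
The plan is to exploit the optimality of $\PM_{\C,k}^\xi(\M_1)$ by exhibiting a convenient rank-$k$ feasible candidate whose residual is easy to bound. Concretely, by the definition of $\PM_{\C,k}^\xi$ in Section~\ref{sec:notation}, for any matrix $\Y \in \RB^{p\times n}$ with $\rk(\Y) \leq k$ we have
\[
\big\| \M_1 - \PM_{\C,k}^\xi (\M_1) \big\|_\xi^2
\;\leq\; \big\| \M_1 - \C \Y \big\|_\xi^2,
\]
so it suffices to choose a single good $\Y$.

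The natural candidate is $\Y = (\V_1^T \X)^\dag \V_1^T \in \RB^{p\times n}$, which has rank at most $k$ since $(\V_1^T \X)^\dag \in \RB^{p\times k}$. The first key step is to verify that this choice reproduces $\M_1$ exactly when multiplied by $\M_1 \X$ on the left. Writing the compact SVD $\M_1 = \U_1 \Si_1 \V_1^T$ and using the assumption $\rk(\V_1^T \X) = k$ (which says $\V_1^T \X \in \RB^{k\times p}$ has full row rank, hence $(\V_1^T \X)(\V_1^T \X)^\dag = \I_k$), I would compute
\[
\M_1 \X \Y \;=\; \U_1 \Si_1 (\V_1^T \X)(\V_1^T \X)^\dag \V_1^T \;=\; \U_1 \Si_1 \V_1^T \;=\; \M_1.
\]
Consequently $\C \Y = (\M_1 + \M_2)\X \Y = \M_1 + \M_2 \X (\V_1^T \X)^\dag \V_1^T$, so the residual is
\[
\M_1 - \C \Y \;=\; -\,\M_2 \X (\V_1^T \X)^\dag \V_1^T.
\]

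The second step is to remove the trailing $\V_1^T$ factor. Since $\V_1$ has orthonormal columns, right-multiplication by $\V_1^T$ is an isometry for both the spectral and Frobenius norms; I would note briefly that $\|A \V_1^T\|_F^2 = \tr(\V_1 A^T A \V_1^T) = \tr(A^T A)$ (using $\V_1^T \V_1 = \I_k$) and that for the spectral norm, $\|A\V_1^T\|_2 \leq \|A\|_2$ trivially while $\|A\|_2 = \|A\V_1^T \V_1\|_2 \leq \|A \V_1^T\|_2$, giving equality. Combining this with the optimality inequality yields exactly
\[
\big\| \M_1 - \PM_{\C,k}^\xi (\M_1) \big\|_\xi^2
\;\leq\; \big\| \M_2 \X (\V_1^T \X)^\dag \V_1^T \big\|_\xi^2
\;=\; \big\| \M_2 \X (\V_1^T \X)^\dag \big\|_\xi^2.
\]

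There is no real obstacle; the proof is essentially a one-line application of optimality once the candidate $\Y = (\V_1^T \X)^\dag \V_1^T$ is guessed. The only place requiring a small amount of care is justifying that this $\Y$ indeed has rank $\leq k$ (which follows from the dimensions of $(\V_1^T \X)^\dag$) and that $\V_1^T$ can be dropped inside the $\xi$-norm; both are routine. This mirrors Lemma~3.1 of \cite{boutsidis2011near}, but generalizes it from the $\M_1 = \M_k$ case to an arbitrary splitting $\M = \M_1 + \M_2$ with $\rk(\M_1) = k$, without needing any orthogonality condition such as $\M_1 \M_2^T = \0$.
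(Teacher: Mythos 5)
Your proof is correct and follows essentially the same route as the paper's: both bound the best rank-$k$ fit by the specific candidate $\Y = (\V_1^T\X)^\dag \V_1^T$, use $(\V_1^T\X)(\V_1^T\X)^\dag = \I_k$ (full row rank) so that $\M_1\X\Y = \M_1$, and read off the residual $\M_2\X(\V_1^T\X)^\dag\V_1^T$. Your explicit justification for dropping the trailing $\V_1^T$ (unitary invariance of both norms) is a small point the paper leaves implicit, but it is the same argument.
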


\begin{proof}
The proof mirrors Lemma 3.1 of \cite{boutsidis2011near}.
Let $\xi = 2$ or $F$ and $\M_1 = \U_1 \Si_1 \V_1^T$.
We expand the error term by
\begin{eqnarray*}
\big\| \M_1- \PM_{\C,k}^\xi (\M_1) \big\|_\xi^2
& = & \min_{\rk(\Y)\leq k} \big\| \M_1 - \C \Y \big\|_\xi^2
\; \leq \; \big\| \M_1 - \C (\V_1^T \X)^\dag \V_{1} \big\|_\xi^2 \\
& = & \big\| \M_1 - (\M_1 + \M_{2}) \X (\V_1^T \X)^\dag \V_{1} \big\|_\xi^2 \\
& = & \big\| \M_1 - \U_1 \Si_1 \underbrace{(\V_1^T  \X)}_{k\times p} \underbrace{(\V_1^T \X)^\dag}_{p\times k} \V_{1}
        - \M_{2}  \X (\V_1^T \X)^\dag \V_{1} \big\|_\xi^2 \\
& = & \big\| \M_1 - \U_1 \Si_1\V_1^T - \M_{2} \X (\V_1^T \X)^\dag \V_{1} \big\|_\xi^2 \\
& = & \big\|  \M_{2} \X (\V_1^T \X)^\dag \V_{1} \big\|_\xi^2 .
\end{eqnarray*}
Here the fourth equality follows from that $(\V_1^T  \X) (\V_1^T  \X)^\dag = \I_k$ when $k\leq p$ and $\rk (\V_1^T \X) = k$.
\end{proof}

\bibliographystyle{abbrv}
\bibliography{matrix}

\end{document}